\newtheorem{assumption}{Assumption}
\newcommand\NN{\mathbb{N}} 
\newcommand\RR{\mathbb{R}} 
\newcommand\CC{\mathbb{C}} 
\newcommand\PP{\mathbb{P}} 
\newcommand\EE{\mathbb{E}} 
\newcommand\DD{\mathbb{D}}
\newtheorem{theorem}{Theorem}[section]%
\newtheorem{corollary}[theorem]{Corollary}%
\newtheorem{proposition}[theorem]{Proposition}%
\newtheorem{lemma}[theorem]{Lemma}%
\newtheorem{definition}[theorem]{Definition}%
\newtheorem{remark}[theorem]{Remark}%
\author{Raphael Butez\footnote{This project has received funding from the European Research Council (ERC) under the European Union’s Horizon 2020 research and innovation programme (grant agreement No. 692452).}\,\, and David García-Zelada}
\title{Extremal particles of two-dimensional Coulomb gases and random polynomials on a positive background}
\begin{document}
\maketitle

\begin{abstract}
We study the outliers for two models which have an interesting connection. On the one hand, we study a specific class of planar Coulomb gases which are determinantal. It corresponds to the case where the confining potential is the logarithmic potential of a
radial probability measure. 
On the other hand, we study the zeros of random  polynomials that 
appear to be closely related to the first model. 
Their behavior
far from
the origin is shown to depend
only on the decaying properties
of the probability measure generating
the potential. A similar
feature is observed for their behavior near the
origin. Furthermore, in some cases,
the appearance of outliers is observed,
and the zeros of random polynomials
and the Coulomb gases are seen to exhibit 
exactly the same behavior, which is related to
the unweighted Bergman kernel. 

\vspace{2.5mm}

\noindent
\textit{2020 MSC:}
\small
60G55; 82B21;
60F05; 60K35; 30C15.

\vspace{2.5mm}
\noindent
\textit{Keywords:}
\small
Bergman kernel;
Coulomb gas; 
determinantal point process;
Gibbs measure; 
interacting particle system;
random polynomial.

\end{abstract}

\section{Introduction}
\subsection{Coulomb gases and random polynomials}

We will be interested in
a system of 
$n$ interacting particles at
equilibrium whose positions,
$x_1,\dots,x_n \in \nolinebreak \mathbb{C}$,
follow the law

\begin{equation}\label{gaz}
\frac{1}{Z_n} \exp 
\left(-2  \left[ -\sum_{i<j}  \log |x_i-x_j| + (n+1) \sum_{i=1}^n V(x_i)  \right]
\right) {\rm d}\ell_{\mathbb{C}^n}(x_1, \dots, x_n),
\end{equation}
where $Z_n$ is a normalizing constant,
$V$ is a 
continuous real valued function,
called the confining potential,
and $\ell_{\mathbb{C}^n}$ is the Lebesgue measure on $\mathbb{C}^n$.
If we define the Hamiltonian 
\begin{equation}
\label{eq:Hamitonian}
(x_1,\dots,x_n) \mapsto
-\sum_{i<j} \log |x_i-x_j| + (n+1) \sum_{i=1}^n V(x_i), 
\end{equation}
then  \eqref{gaz} is the canonical Gibbs measure at inverse temperature $2$ associated to this Hamiltonian. 
The random element
$(x_1,\dots,x_n)$ 
is sometimes known as
a \textit{Coulomb gas}
or a \textit{two-component plasma}
since \eqref{eq:Hamitonian} can be interpreted 
as the electrostatic energy
of a system of confined electrically
charged particles.
This model is well-defined
for every $n$ as soon as
the potential $V$ satisfies
\begin{equation}
\label{potential}
\varliminf_{|z|\to \infty}
\left\{
V(z)-\log|z| \right\}> - \infty.
\end{equation}
Notice that if we had written
$n$
instead of $n+1$
 in front of $V$
in \eqref{gaz},  then
\eqref{potential}
would not be enough to assure
that \eqref{gaz} is well-defined.
In this article, we will only consider potentials of the form
\begin{equation}
\label{eq:potential}
 V^{\nu}(z) = \int_{\mathbb C} \log|z-w|
 {\rm d}\nu(w) 
\end{equation}
for a rotationally invariant probability measure $\nu$ such that its potential $V^{\nu}$ makes
sense and is finite. 
For this kind of potentials,
$V(z) - \log|z|$ has a finite
limit as $|z| \to \infty$ so
that \eqref{potential}
is satisfied but we cannot replace
$n+1$ by $n$ in front of
the potential in \eqref{gaz}.
In this article, this model will
be called a
\textit{jellium}, since it corresponds to the situation where $n$ classical electrons with unit negative charge
are attracted by a positively charged
distribution.
 This denomination is not standard and was discussed in \cite{ChafaiGarciaZeladaJung}.  
In our case, the positive
distribution is $(n+1)\nu$, which has
total charge $n+1$. 
If $(x_1,\dots,x_n)$ is a
jellium associated to $\nu$,
 Frostman's criterion \cite{SaffTotik}
 and standard large deviation
 principles (see \cite{GarciaZelada}, for instance) 
 imply the convergence
 of the sequence of empirical measures
 towards $\nu$, i.e.
\[ \mu_n:=\frac{1}{n} \sum_{k=1}^n \delta_{x_k} \xrightarrow[n \to \infty]{\mathrm{a.s.}} \nu.  \]
\begin{assumption}\label{assumption1}
The probability measure $\nu$ is rotationally invariant and satisfies
 \[ \int_{\mathbb C} 
 |\log |z|\, |\, {\rm d}\nu(z)< \infty.\]
This implies, in particular, that $V^{\nu}$ is finite everywhere and that
\[ \mathcal{E}(\nu):= - 
\int_{\mathbb C} V^{\nu}(z) {\rm d}\nu(z) =  
-\int_{\mathbb C^2}
\log|z-w| {\rm d}\nu(z) {\rm d}\nu(w) \in \RR .  \]
\end{assumption}
The key to our approach is the fact that 
Coulomb gases in the plane at inverse temperature $2$ are determinantal point processes.
We state a definition
and some facts
used in this article
in
Subsection \ref{sub:DPP}
in the appendix.
For a nice
introduction to this subject,
we suggest \cite{HoughKrisPeresVirag}.

Given a radial measure $\nu$, 
we can 
also consider random polynomials
\begin{equation} \label{RandomPoly}
P_n(z) = \sum_{k=0}^{n} a_k R_{k,n}(z)
\end{equation}
where the $a_k$'s are 
\text{i.i.d.} random variables and
$(R_{k,n})_{k \in \{0,\dots,n \}}$
 is a normalized basis of
 monomials in $\mathbb{C}_n[X]$ for the inner product
\begin{equation}\label{Scalar Product}
\langle P, Q \rangle_{n,\nu} 
= \int_\mathbb{C} P(z) \overline{Q(z)} e^{-2n V^{\nu}(z)} 
{\rm d}\nu(z),
\end{equation}
i.e.
$R_{k,n}(z) = z^k/
\sqrt { \langle X^k,
X^k \rangle_{n , \nu}}$
where $X^k(z) = z^k$. 
In this article, we will always
assume that
$\mathbb P(P_n \neq 0)=1$ or,
equivalently, that
$\mathbb P(a_0 \neq 0) = 1$
so that the polynomial
$P_n$ has degree exactly $n$
and we will denote
its zeros, counted with multiplicity (and in
any order), 
by $z_1,\dots,z_n$.

\begin{assumption}\label{assumption2}
$a_0 \neq 0$ almost surely,
$a_0$ is not deterministic and 
$\EE(\log (1+|a_0|))<+\infty$.
\end{assumption}
This moment condition is classical for random polynomials. It ensures that the empirical measures of the zeros $\frac{1}{n}\sum_{k=1}^n \delta_{z_k}$ converges towards a deterministic measure \cite{IbragimovZaporozhets},\cite{BloomDauvergne} for many models of random polynomials.

\begin{remark}
Since $\nu$ is invariant
under rotations,
the
basis $(R_k)_{k \in \{0,\dots,n \}}$
is an orthonormal
basis
of $\mathbb{C}_n[X]$  for
the inner product
$\langle \cdot, 
\cdot \rangle_{n,\nu}$.
If the $a_k$'s are 
standard complex Gaussian random variables,
 i.e. if
$a_k \sim e^{-|z|^2} 
\mathrm d \ell_{\mathbb C}(z)/
\pi$, then the random polynomial
$P_n$ is just a
Gaussian random element
of 
$\mathbb{C}_n[X]$  with complex
variance
$\langle \cdot, 
\cdot \rangle_{n,\nu}$.
In this case,
the definition can be 
naturally extended
to non-radial $\nu$.
Nevertheless,
if the coefficients are 
not Gaussian, an
orthonormal basis should
be chosen
to use
the definition in \eqref{RandomPoly}.
\end{remark}

The Gaussian case of this 
model was introduced by Shiffman and Zelditch \cite{ShiffmanZelditch} in the context of random sections of line bundles. It covers the classical random polynomials ensembles, namely Kac polynomials\footnote{In
the definition of Kac polynomials, and throughout
the article, $S^1$ will
denote the unit circle.}, elliptic polynomials and (nearly) Weyl polynomials for specific choices of $\nu$.
\vspace{0mm}
\begingroup
\renewcommand{\arraystretch}{2}

\begin{center}
	\begin{tabular}{c||c|c|c|c|}
		
		Model &  Basis & Measure & Potential \\
		\hline \hline
		Kac &  $X^k$ & $\nu_{S^1}$ uniform on $S^1$ & $V^{\nu_{S^1}}(z)=\max( \log|z|,0)$\\
		\hline
		Elliptic &  $\sqrt{\binom{n}{k}}X^k$ & 
		$d\nu_{FS}(z)= \frac{d\ell_{\mathbb{C}}(z)}
{\pi(1+|z|^2)^2}$ & $V^{\omega_{FS}}(z)= \frac{1}{2}\log(1+|z|^2)$ \\
		\hline
		Nearly Weyl & $\frac{\sqrt{n^k}X^k}{\sqrt{k! - \int_n^{\infty} r^{k}e^{-r}{\rm d}r}}$ & 
		$d\nu_{\mathbb{D}}(z)= \frac{1_{|z|<1}d\ell_{\mathbb{C}}(z)}{\pi}$ & $V^{\nu_{\mathbb{D}}}(z)= 
	\begin{cases} \frac{1}{2}(|z|^2-1)  &\!\! \text{if } |z|<1 \\
	\log|z|  &\! \!\text{if } |z|\geq 1
		\end{cases} $\\
		\hline 
	\end{tabular}
\end{center}
\endgroup
The random polynomials that we called ``Nearly Weyl'' polynomials are not exactly the classical rescaled Weyl polynomials, which are usually defined as 
\[ P_n^{\text{Rescaled Weyl}}(z) =\sum_{k=0}^n \frac{\sqrt{n^k}}{\sqrt{k!}} a_k z^k \]
and which would be related to the
Lebesgue measure on the plane. The actual rescaled
Weyl polynomials
will be treated in
Theorem \ref{th:Weyl}. Figure \ref{Fig1} is a realization of the zeros of the three classical models of random polynomials.

\begin{figure}[!h]
	\centering
	\includegraphics[height=4.5cm]{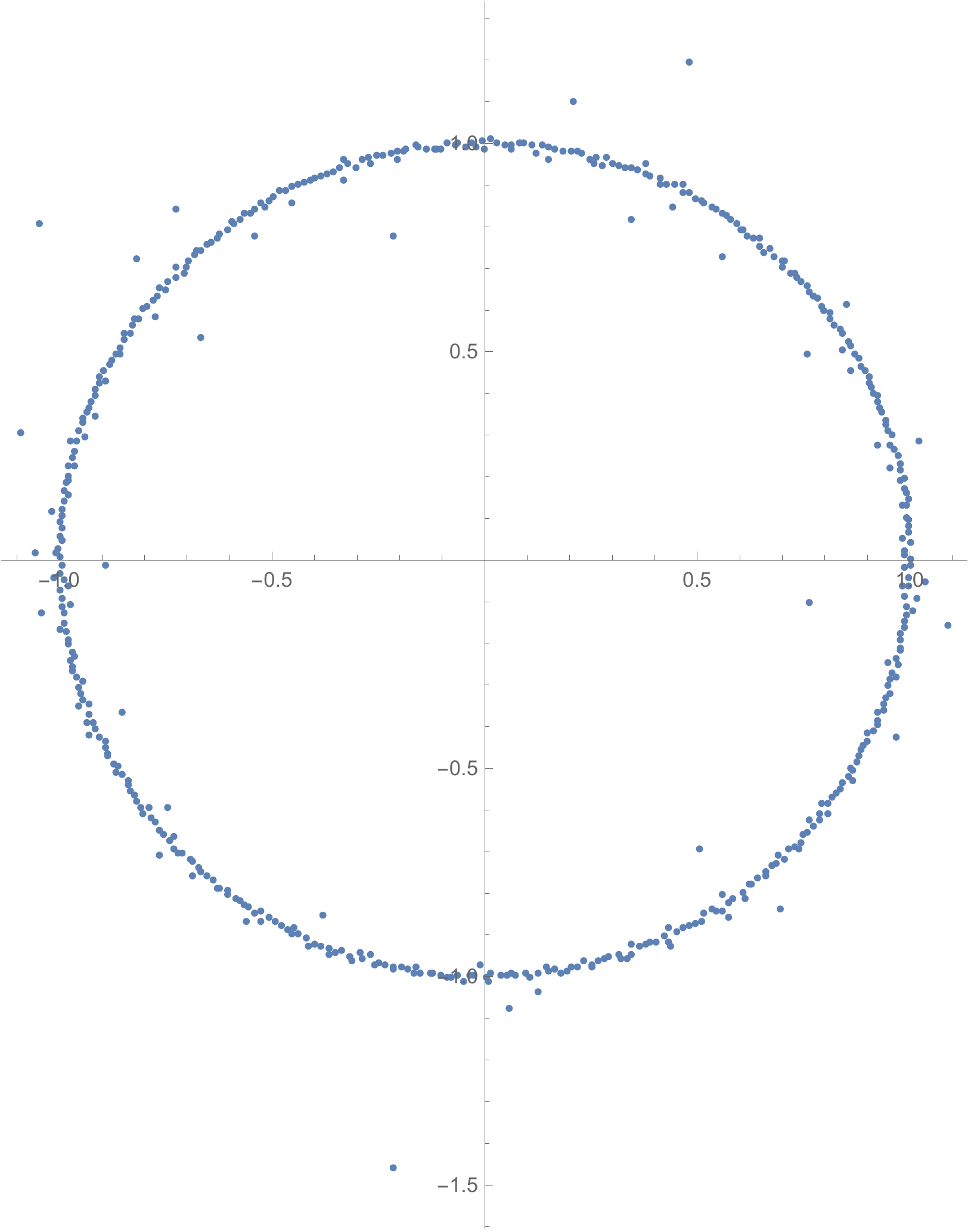}
	\includegraphics[height=4.5cm]{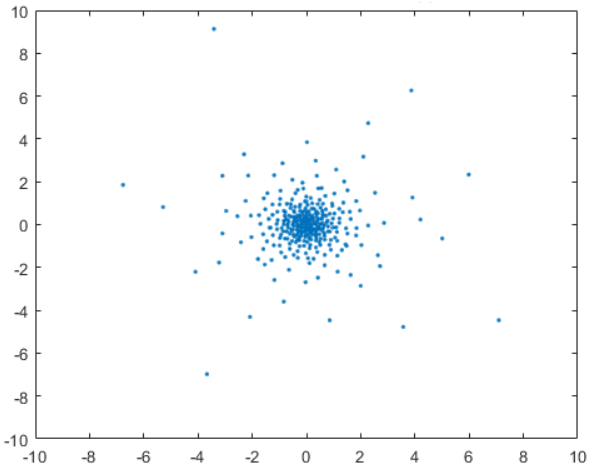}
	\includegraphics[height=4.5cm]{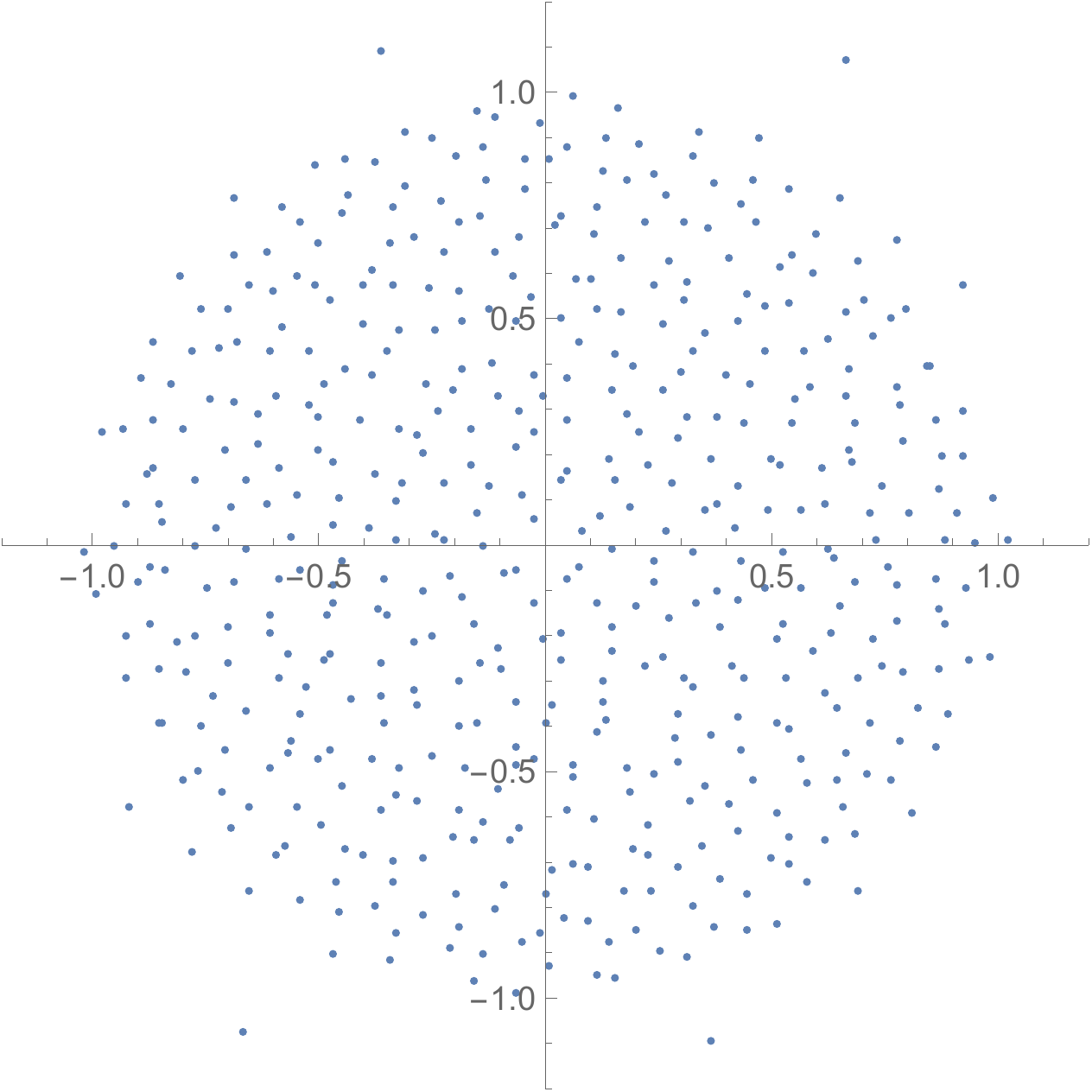}
	
	\caption{Roots of random polynomials A) Kac, B) Elliptic and C) Nearly Weyl, with 
	complex Gaussian coefficients.}\label{Fig1}
\end{figure}

The jellium and zeros of random polynomials present very similar behaviors: for these two models, the sequence of empirical measures $(\mu_n)_{n \in \NN^+}$ converges towards  $\nu$. Large deviation principles 
with speed $n^2$ but different rate functions are also valid. See \cite{ZeitouniZelditch,ButezZeitouni} for random polynomials and \cite{Hardy} for the jellium \footnote{Strictly 
speaking, \cite{Hardy}
fails to treat the inverse
temperature $2$ but the proof
still works for this case.
}. At the microscopic level, zeros of random polynomials seem to differ, as it is suggested by the article of Krishnapur and Virag \cite{KrishnapurVirag} which does not apply fully to this case. Figure \ref{Jellium} shows realizations of a jellium associated with the same measures $\nu$ as used in Figure \ref{Fig1}: A) $\nu_{S^1}$ the uniform measure on the unit circle, B) $\nu_{FS}$ the Fubini-Study measure, and C) $\nu_{\mathbb{D}}$ the uniform measure on the unit disk.
\begin{figure}[!h]
	\centering
	\includegraphics[height=4cm]{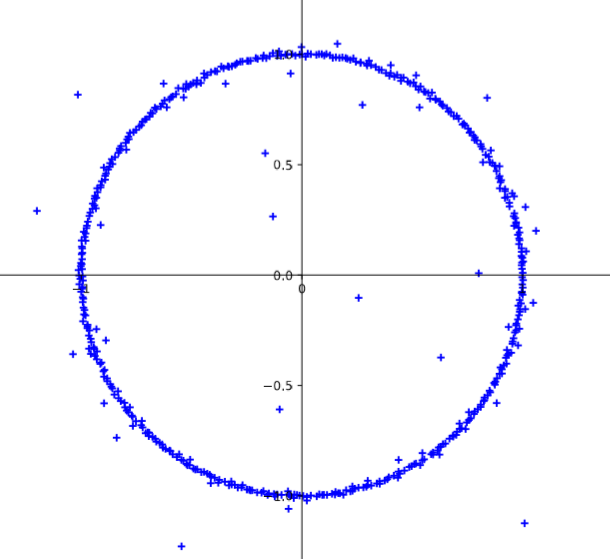}
	\includegraphics[height=4.5cm]{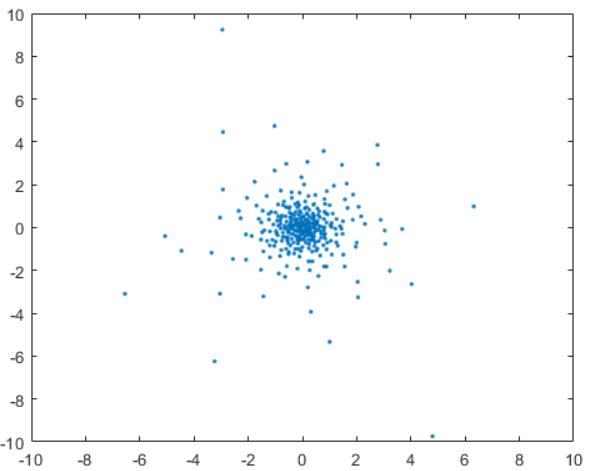}
	\includegraphics[height=4cm]{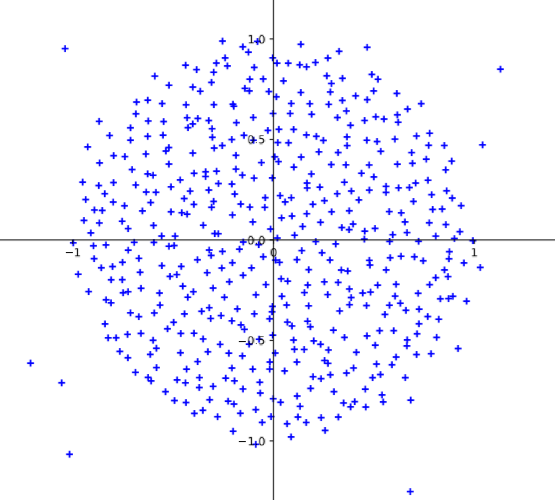}
	
	\caption{Jellium with background A) $\nu_{S^1}$, B) $\nu_{FS}$ et C) $\nu_\mathbb{D}$ .}\label{Jellium}
\end{figure}

\subsection{Results on the jellium}
In this section, we study the jellium. 
We are interested in the behavior of its
extremals, i.e.
either near the origin or near infinity.
We prove that, near the origin, 
the point process formed by the jellium
converges and its limit depends on the behavior
of $\nu$ near the origin. A similar feature
is observed near infinity. Outliers
are seen to
appear when $\nu$ gives zero charge
to a neighborhood of the origin
or to a neighborhood of infinity.  These outliers 
will converge to a Bergman point process
of a disk (neighborhood of zero) or of
the complement of the disk (neighborhood of infinity) that we introduce now.

\begin{definition}[Bergman point process]
\label{def:BergmanPointProcess}
	The Bergman point process of an open set $U \subset \CC$, written $\mathcal{B}_U$, is the determinantal point process on $U$ associated to the Bergman kernel of the set $U$. We will be interested
	in two particular cases: 
	The case of an open disk
	$D_R$
	of radius $R$
	where the Bergman kernel is
	\[ \forall z ,w \in D_R \quad  K_{D_R}(z,w)= 
	\frac{R^2}{\pi(R^2-z\bar{w})^2}\]
	and the case of the complement
	of the closed disk of
	radius $R$
	where the Bergman kernel \nolinebreak is
	\[ \forall z ,w \in
	\mathbb C \setminus \bar {D}_R \quad  K_{
		\CC \setminus \bar {D}_R}(z,w)= 
	\frac{R^2}{\pi(R^2
		z \bar w -1)^2}.\]
	
\end{definition}

For more information on the Bergman kernel, 
Bell \cite{Bell} gives a nice presentation on this topic.
An interesting connection to random
analytic functions is given in the work of 
Peres and Vir\'ag \cite{PeresVirag}.
More specifically, they prove that 
$\mathcal B_{\mathbb D}$
can be seen as the zeros of a Gaussian analytic
function. It can be shown that
$\mathcal B_{\mathbb D}$ is invariant 
in law under any conformal map 
of $\mathbb D$ and that, almost surely,
it has an infinite number of points and
every point of the unit circle is 
an accumulation point of $\mathcal B_{\mathbb D}$\footnote{This can be seen as a consequence of the
characterization
of the number of points
on a set
as a sum of independent Bernoulli
distributed random
variables
\cite[Theorem 4.5.3]{HoughKrisPeresVirag}
together with Borel-Cantelli lemma
for independent events.}.

\begin{theorem}[Outliers for the jellium]\label{JelliumThm}
Let $\nu$ be a probability measure 
	that satisfies \emph{Assumption \ref{assumption1}}
	and let $(x_1,\dots,x_n)$ be a jellium associated to $\nu$. Let $U$ be a connected component of $\CC \setminus \nolinebreak \mathrm{supp}\, \nu$ and suppose
that it is either an open disk or the complement of a 
closed disk.
Then the sequence of point processes $\{ x_k \text{ such that } x_k \in U \}$ converges weakly as $n$ goes to infinity towards the Bergman point process $\mathcal{B}_U$ of $U$.
\end{theorem}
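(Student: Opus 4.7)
The plan is to use the determinantal structure of the jellium and to prove that, after multiplying by a suitable unit-modulus gauge, the correlation kernel $K_n$ converges locally uniformly on $U\times U$ to the Bergman kernel $K_U$; since the jellium restricted to $U$ is itself a DPP, this entails weak convergence of $\{x_k:x_k\in U\}$ to $\mathcal{B}_U$ via the standard facts collected in Subsection~\ref{sub:DPP}. The rotational invariance of $\nu$ makes the monomials $\{z^k\}_{k=0}^{n-1}$ pairwise orthogonal for the inner product induced by $e^{-2(n+1)V^\nu}\,\mathrm{d}\ell_{\mathbb{C}}$, so the jellium's correlation kernel with respect to Lebesgue measure reads
\begin{equation*}
K_n(z,w)=e^{-(n+1)(V^\nu(z)+V^\nu(w))}\sum_{k=0}^{n-1}\frac{(z\bar w)^k}{h_k^{(n)}},\quad h_k^{(n)}=2\pi\int_0^\infty r^{2k+1}e^{-2(n+1)V^\nu(r)}\,\mathrm{d}r.
\end{equation*}
Moreover $V^\nu$ is harmonic and radial on $U$, so the mean-value property yields the closed forms $V^\nu\equiv c:=\int\log|w|\,\mathrm{d}\nu(w)$ in Case~1 ($U=D_R$) and $V^\nu(z)=\log|z|$ in Case~2 ($U=\mathbb{C}\setminus\bar{D}_R$).

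For Case~1, the kernel simplifies on $D_R\times D_R$ to $e^{-2(n+1)c}\sum_{k=0}^{n-1}(z\bar w)^k/h_k^{(n)}$. Splitting $h_k^{(n)}$ at $r=R$ and using $V^\nu\equiv c$ on $[0,R]$ yields
\begin{equation*}
e^{2(n+1)c}h_k^{(n)}=\frac{\pi R^{2k+2}}{k+1}+2\pi\int_R^\infty r^{2k+1}e^{-2(n+1)(V^\nu(r)-c)}\,\mathrm{d}r,
\end{equation*}
and I would show that the remainder vanishes as $n\to\infty$ by cutting its domain into $[R,R+\varepsilon]\cup[R+\varepsilon,M]\cup[M,\infty)$: the middle slab is exponentially small since $V^\nu-c$ is bounded below there, the near-$R$ piece is $O(\varepsilon)$, and the far-infinity piece is controlled using $V^\nu(r)=\log r+o(1)$ (a consequence of Assumption~\ref{assumption1}). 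Nonnegativity of the remainder also furnishes the uniform bound $(e^{2(n+1)c}h_k^{(n)})^{-1}\leq(k+1)/(\pi R^{2k+2})$, dominating the series by a geometric one on compacts of $D_R\times D_R$; term-by-term passage to the limit then identifies the pointwise limit of $K_n$ with $\sum_{k\geq 0}(k+1)(z\bar w)^k/(\pi R^{2k+2})=R^2/(\pi(R^2-z\bar w)^2)=K_{D_R}(z,w)$, and local uniformity follows from Vitali-Porter since the partial sums are analytic.

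For Case~2, the kernel equals $|z|^{-(n+1)}|w|^{-(n+1)}\sum_{k=0}^{n-1}(z\bar w)^k/h_k^{(n)}$ on $U\times U$. The dominant contributions come from $k$ close to $n-1$, so I apply the unit-modulus gauge $g(z)=(|z|/z)^{n+1}$ and reindex by $j=n-1-k$:
\begin{equation*}
g(z)\overline{g(w)}K_n(z,w)=\sum_{j=0}^{n-1}\frac{(z\bar w)^{-j-2}}{h^{(n)}_{n-1-j}}.
\end{equation*}
Splitting $h^{(n)}_{n-1-j}$ at $r=R$ and using $V^\nu(r)=\log r$ for $r>R$ gives the outer piece $\pi R^{-2j-2}/(j+1)$ exactly; the inner piece $2\pi\int_0^R r^{2n-2j-1}e^{-2(n+1)V^\nu(r)}\,\mathrm{d}r$ is treated by Laplace's method, its integrand concentrating near $r=R$ (the critical equation being $\tilde\nu([0,r])=(2n-2j-1)/(2(n+1))\to 1$, where $\tilde\nu$ is the radial distribution of $\nu$) with an $o(1)$ overall contribution. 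The same nonnegativity--plus--geometric-domination argument as in Case~1 then delivers $g(z)\overline{g(w)}K_n(z,w)\to(1/\pi)\sum_{j\geq 0}(j+1)R^{2j+2}/(z\bar w)^{j+2}=K_U(z,w)$ locally uniformly.

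The main obstacle is the uniform-in-$n$ control of the ``non-trivial'' piece of $h_k^{(n)}$---the integral over $(R,\infty)$ in Case~1 and over $[0,R]$ in Case~2. The analysis near $\partial U$ is the subtle point since $V^\nu$ need not be smooth across $\partial D_R$ (its radial derivative jumps by the mass of $\nu$ there, and further irregularity may be inherited from atoms or a rough radial density of $\nu$); but in fact only the first-order behaviour of $V^\nu$ at $R$ is needed, and Assumption~\ref{assumption1} delivers, through the finiteness of $V^\nu(0)$ and the asymptotic $V^\nu(r)-\log r\to 0$, precisely the bounds required to make the error integrals uniformly negligible.
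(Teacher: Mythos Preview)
Your approach is correct and reaches the same conclusion as the paper, but the route differs in one structural respect that is worth highlighting. The paper does \emph{not} treat the disk and the complement-of-a-disk separately: it invokes the equivariance of the jellium under scaling and inversion (Theorem~\ref{JelliumEquivariance}) together with the conformal invariance of the Bergman process (Theorem~\ref{BergmanEquivariance} and Remark~\ref{Rem:DiskAndComplement}) to reduce \emph{both} cases to $U=\mathbb D$ with $V^\nu\equiv 0$ on $\bar{\mathbb D}$. After this reduction there is only one kernel computation, namely $b_{k,n}\to (k+1)/\pi$, and the tail $\int_1^\infty r^{2k+1}e^{-2(n+1)V^\nu(r)}\,{\rm d}r$ is killed by the single dominated-convergence bound $r^{2k+1}e^{-2(n+1)V^\nu}\le r^{2k+1}e^{-2(k+2)V^\nu}$, valid for $k\le n-1$; no three-slab decomposition or Laplace analysis is needed. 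Your Case~1 argument is the same computation transported to radius $R$ with the constant $c$ carried along; it works, but the $[R,R+\varepsilon]\cup[R+\varepsilon,M]\cup[M,\infty)$ cut is heavier than necessary. Your Case~2 is a genuine alternative to the paper's inversion trick: the unit-modulus gauge $g(z)=(|z|/z)^{n+1}$ and the reindexing $j=n-1-k$ convert the problem back into a geometric series, and the inner remainder $\int_0^R r^{2n-2j-1}e^{-2(n+1)V^\nu}\,{\rm d}r$ does tend to $0$---though again a one-line dominated convergence (factor as $r^{-2j-3}(r^2e^{-2V^\nu(r)})^{n+1}$, noting $r^2e^{-2V^\nu(r)}<1$ on $(0,R)$ and $=1$ at $R$) is cleaner than the Laplace heuristic you sketch. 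In short: the paper buys brevity and a single computation by front-loading the symmetry results; your direct attack avoids those auxiliary theorems at the cost of duplicating the kernel analysis and carrying slightly more bookkeeping. Two small clean-ups: the uniform-on-compacts convergence of the kernel already follows from your geometric domination via ordinary dominated convergence on the series, so the Vitali--Porter appeal is superfluous; and in Case~1 the positivity $V^\nu(r)>c$ for $r>R$ (needed for the middle slab) should be justified via $\nu(D_s)>0$ for $s>R$, which holds because $\partial D_R\subset\mathrm{supp}\,\nu$.
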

The topology on (deterministic) point
process is reminded in the appendix, Section \ref{section:appendix},
for convenience of the reader. 
This theorem gives the universality of the outliers of the jellium since 
they only depend on the domain considered. In Figure \ref{Jellium}, the outliers outside of the unit disk for a jellium
associated to $\nu_{S^1}$ or $\nu_{\mathbb D}$
 have the same limiting behavior.

If $\nu$ has compact support, 
the convergence of the
point process outside a disk in 
Theorem \ref{JelliumThm} does not immediately imply the convergence of $\max_{k \in \{1,\dots,n\}} 
|x_k|$ towards a universal limiting random variable. Nevertheless,
it can be obtained
by an analysis of the minima in
the inverted model 
and it is stated in 
the next corollary.

\begin{corollary}[Particle of extremal modulus]\label{CorollaireJellium}
Let $\nu$ be a radial, compactly supported, probability measure satisfying \emph{Assumption \ref{assumption1}}. Let $R$ be the outer radius of the support 
of \nolinebreak $\nu$. Then, we have
\[\max_{k \in \{1, \dots, n\}} |x_k| 
\xrightarrow[n \to \infty]{\rm{law}}
R \, x_{\infty},\]
where $x_{\infty}$ is a random variable
taking values in $[1,\infty)$ with cumulative distribution function
			\[ \PP(x_{\infty}<t) 
			= \prod_{k=1}^{\infty} 
			\left(1- t^{-2k}\right).  \]
The law of
$R\, x_{\infty}$ is the same as the law of the 
maximum  modulus of the Bergman point process of $\CC \setminus 
\bar {D}_R$.
\end{corollary}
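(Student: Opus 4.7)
The plan is to invert space, apply Theorem \ref{JelliumThm} to the transformed jellium, and then read off the distribution of $\max_k|x_k|$ from the minimum modulus of a Bergman point process on a disk. First I would verify that $(y_1,\dots,y_n):=(1/x_1,\dots,1/x_n)$ is itself a jellium, this time associated to $\tilde\nu$, the pushforward of $\nu$ under the inversion $z\mapsto 1/z$. A direct change of variables in \eqref{gaz} produces three $|y_i|$-dependent factors: the Vandermonde $\prod_{i<j}|x_i-x_j|^2$ gives $\prod_i|y_i|^{-2(n-1)}$, the Lebesgue Jacobian gives $\prod_i|y_i|^{-4}$, and the identity $V^\nu(1/y)=V^{\tilde\nu}(y)-\log|y|+\int\log|w|\,d\nu(w)$ contributes $2(n+1)\sum_i\log|y_i|$ in the exponent. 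The exponent on each $|y_i|$ is $-2(n-1)-4+2(n+1)=0$, so these three contributions cancel exactly, leaving the jellium density for $\tilde\nu$ (up to an absorbed constant); Assumption \ref{assumption1} for $\tilde\nu$ is preserved since $\int|\log|y||\,d\tilde\nu(y)=\int|\log|w||\,d\nu(w)<\infty$.

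Next, since $\nu$ is supported in $\bar D_R$ with outer radius exactly $R$ and carries no mass at $0$, $\mathrm{supp}\,\tilde\nu\subset\{|y|\ge 1/R\}$ and meets the circle $|y|=1/R$, so $D_{1/R}$ is a connected component of $\mathbb C\setminus\mathrm{supp}\,\tilde\nu$. Theorem \ref{JelliumThm} applied to the inverted jellium with $U=D_{1/R}$ then gives that the point process $\{y_k:y_k\in D_{1/R}\}$ converges weakly to $\mathcal B_{D_{1/R}}$. I would upgrade this to $\min_k|y_k|\to \min|\mathcal B_{D_{1/R}}|$ in law as follows: for any $t<1/R$, every $y_k$ with $|y_k|<t$ automatically lies in $D_{1/R}$, so $\#\{k:|y_k|<t\}$ converges in law to $\#\{z\in\mathcal B_{D_{1/R}}:|z|<t\}$ at continuity points of the limiting CDF by a Portmanteau argument on the relatively compact disc $\{|z|<t\}\subset D_{1/R}$; the almost-sure bound $\min|\mathcal B_{D_{1/R}}|<1/R$ (the limiting process has infinitely many points) controls the behaviour near $t=1/R$. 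The continuous mapping theorem on $(0,\infty)$ combined with $\max_k|x_k|=1/\min_k|y_k|$ then delivers the convergence in law of $\max_k|x_k|$.

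Finally, I would identify the limit. By conformal invariance of the Bergman point process under the biholomorphism $y\mapsto 1/y:D_{1/R}\setminus\{0\}\to\mathbb C\setminus\bar D_R$, the minimum modulus of $\mathcal B_{D_{1/R}}$ is equal in law to the reciprocal of the maximum modulus of $\mathcal B_{\mathbb C\setminus\bar D_R}$, which justifies the last sentence of the corollary. For the explicit product formula, the radial decomposition theorem of Hough--Krishnapur--Peres--Virág \cite{HoughKrisPeresVirag} applied to the orthonormal basis $(\sqrt{(k+1)/\pi}\,R^{k+1}z^k)_{k\ge 0}$ of the Bergman space of $D_{1/R}$ shows that $\{R^2|z_k|^2\}$ is distributed as $\{U_k^{1/(k+1)}\}_{k\ge 0}$ with $U_k$ independent uniforms on $[0,1]$, whence
\[ \PP\bigl(R\min_k|z_k|>1/t\bigr)=\prod_{k=0}^{\infty}\bigl(1-t^{-2(k+1)}\bigr)=\prod_{k=1}^{\infty}\bigl(1-t^{-2k}\bigr). \]
The main obstacle I foresee is the passage from the weak convergence of the restricted point process to the convergence of $\min_k|y_k|$ in law: although it reduces to a standard Portmanteau-type argument once the geometry is properly set up, one must carefully handle boundary effects at $|y|=1/R$, where the support of $\tilde\nu$ begins and where bulk particles of the inverted jellium concentrate.
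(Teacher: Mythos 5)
Your proof is correct and follows essentially the same route as the paper: invert to reduce to a disk-shaped uncharged component, apply Theorem \ref{JelliumThm}, pass to the minimum modulus, and read off the explicit law from the radial decomposition in \cite[Theorem 4.7.1]{HoughKrisPeresVirag}. The only notable implementation differences are that you re-derive the inversion equivariance by direct change of variables (the paper cites Theorem \ref{JelliumEquivariance}) and that you pass from weak convergence of the restricted point process to convergence of the minimum via a Portmanteau argument, whereas the paper packages this step as Lemma \ref{lemma:min} in the appendix.
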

One can check\footnote{In fact, we can see that
$\mathbb P(\max_{k \in \{1, \dots, n\}}
|x_k| > t ) \sim C_n t^{-2}$ for some constant
$C_n>0$ and that
$\mathbb P(x_\infty > t ) \sim t^{-2}$.} that the random variable $x_{\infty}$ has
a finite expected value but infinite variance, as well
as the variable $\max_{k \in \{1, \dots, n\}}|x_k|$ for any $n$. This tells us that 
$\max_{k \in \{1, \dots, n\}}|x_k|$ is "often"
 far from $R$ as $n$ goes to infinity. This is a very different behavior from what was known in the context of strongly confining potentials \cite{ChafaiPeche}, for which Gumbel fluctuations were established for the maximum of the modulus at the edge of the support.

\begin{remark}[Universal behavior of the outer process and screening]
	The limiting law of the modulus of the extremal particle and the limiting point process do not depend on the choice of background $\nu$. Since most of the particles fill the disk 
according to the measure $\nu$, the outer particles "see" two canceling effects: on the one hand they are attracted by the positive background, but they are repelled by the negative charges which have nearly the same effect as the background. The universal behavior of the outer point process is a consequence of this competition.
Nevertheless, the behavior depends on the
coefficient $(n+1)$ at the left side of $V$ on 
\eqref{gaz} as can be seen in 
\cite{GarciaZelada3}.
\end{remark}

\begin{figure}
	\centering
	\includegraphics[width=.5\linewidth]{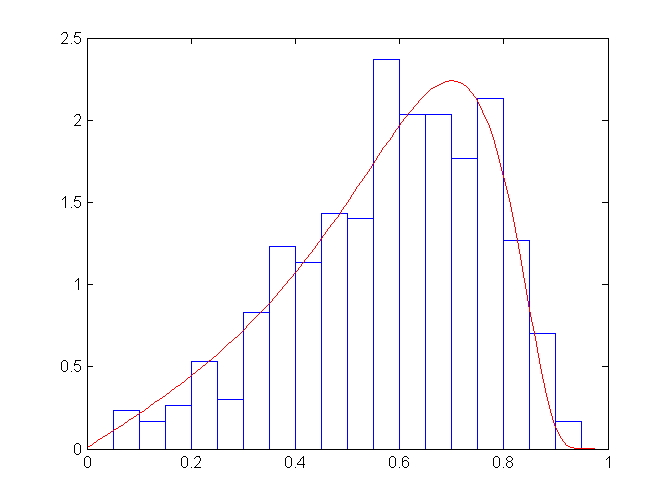}
	\caption{In blue: 
	Histogram for the lowest modulus in 
	a jellium associated to $\nu_{S^1}$
	 with $n=50$.
	In red: Density of the minimum modulus of
	$\mathcal{B}_{\mathbb{D}}$.}\label{MinJellium}
\end{figure}

Figure \ref{MinJellium}
displays the histogram of
a
sample of 
$(\max_{k \in \{1,\dots,n\}}|x_k| )^{-1}$ for the jellium associated to the uniform measure on the unit circle, and the density of the minima
of the Bergman point process of $\mathbb D$. Since
$\max_{k \in \{1,\dots,n\}}|x_k|$ and its limit are
 heavy tailed, it was much more convenient to represent the convergence of their inverses.

Theorem \ref{JelliumThm} described the behavior of the extremal particles if the region is uncharged. 
The next theorem tells us what happens when the extremal region is no longer uncharged.

\begin{theorem}[Extremal particles in the support]\label{BulkMinMax}
	Let $\nu$ be a measure 
	that satisfies \emph{Assumption \ref{assumption1}}
	and let $(x_1,\dots,x_n)$ be a jellium associated to $\nu$.
	
\textbf{At the origin:} Suppose there exists 
$\alpha >0$ and $\lambda>0$ such that
\[ \lim_{r \to 0} \frac{\nu(D_r)}{r^{\alpha}} = 
\lambda . \]
Then, $\{ n^{1/\alpha}x_1, \dots, n^{1/\alpha}x_n \}$
converges weakly towards the determinantal point process on $\mathbb{C}$ associated to the kernel
\begin{equation}
\label{eq:pointprocessnoncompact}
K(z,w)= \sum_{k=0}^{\infty} 
b_k z^{k} \bar{w}^k 
e^{-\gamma |z|^{\alpha}}e^{-\gamma |w|^{\alpha}} .
\end{equation}	 
	where
	\[\gamma=\frac{\lambda}{\alpha}
	\, \, \mbox{ and } \, \, b_k^{-1} = 
	2 \pi \int_0^\infty 
	r^{2k+1} e^{-2\gamma r^{\alpha}} {\rm d}r
	= 2\pi\,
	\Gamma\left(\frac{2k+2}{\alpha} \right) . \]	
	
	\textbf{At infinity: }
	Suppose
	there exists $\alpha >0$ and $\lambda>0$ such that
\begin{equation}
\label{eq:HypInfty}
 \lim_{r \to \infty} r^{\alpha}\nu(
	\hspace{0.5pt} \CC \setminus D_r) = \lambda.
\end{equation}
	Then
	$\{ n^{-1/\alpha} x_1, \dots, 
	n^{-1/\alpha} x_n \}$,
	considered
	as a point process
	on $\CC \setminus \{0\}$,
	converges weakly towards
	the determinantal point process
	on $\CC \setminus \{0\}$
	associated to the kernel \[K(z,w)=\sum_{k=0}^\infty 
	\frac{b_k}{(z \bar w)^{k+2}} 
	e^{-\gamma |z|^{-\alpha}}
	e^{-\gamma |w|^{-\alpha}}\]
	where
	\[\gamma=\frac{\lambda}{\alpha}
	\, \, \mbox{ and } \, \, b_k^{-1} = 
	2 \pi \int_0^\infty 
	r^{2k+1} e^{-2\gamma r^{\alpha}} {\rm d}r
	= 2\pi\,
	\Gamma\left(\frac{2k+2}{\alpha} \right) . \]
	
\textbf{Convergence of the maximum:}
	Under the same hypothesis
	\eqref{eq:HypInfty}, we have that
	\[ \frac{1}{n^{1/\alpha}} \max_{k \in \{1, \dots, n\}} |x_k| \xrightarrow[n \to \infty]{\mathrm{law}}
	\lambda^{1/\alpha} 
	x_{\infty} , \]
	where $x_{\infty}$ is a random variable with cumulative distribution function
\[\mathbb P(x_\infty \leq t) 
= \prod_{k=1}^\infty \frac{\Gamma
	\left(\frac{2k}{\alpha},
	\frac{2}{\alpha t^{\alpha} } 
	\right)}
{\Gamma
	\left(\frac{2k}{\alpha} \right)}\]
and the $\Gamma$ with two arguments
	denotes the upper incomplete gamma
	function.
	
\end{theorem}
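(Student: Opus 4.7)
Our starting point is the determinantal structure of the jellium. Because $\nu$ is rotationally invariant, the monomials $\{z^k\}_{k=0}^{n-1}$ are mutually orthogonal in $L^2(\mathbb{C}, e^{-2(n+1)V^\nu}\mathrm{d}\ell_{\mathbb{C}})$, so the correlation kernel of $(x_1,\ldots,x_n)$ with respect to Lebesgue measure reads
\[K_n(z,w) = \sum_{k=0}^{n-1}\frac{(z\bar w)^k}{c_{k,n}}e^{-(n+1)V^\nu(z)-(n+1)V^\nu(w)}, \qquad c_{k,n}:=2\pi\int_0^\infty r^{2k+1}e^{-2(n+1)V^\nu(r)}\mathrm{d}r.\]
All three statements reduce to asymptotic analysis of $K_n$, the main analytic input being the behaviour of $V^\nu$. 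For radial $\nu$, integration by parts gives
$V^\nu(w) = V^\nu(0) + \int_0^{|w|} r^{-1}\nu(D_r)\mathrm{d}r = \log|w| + \int_{|w|}^\infty r^{-1}\nu(\mathbb{C} \setminus D_r)\mathrm{d}r$,
so the hypotheses at $0$ and at $\infty$ translate directly into the pointwise limits $(n+1)[V^\nu(n^{-1/\alpha}z) - V^\nu(0)] \to \gamma|z|^\alpha$ and $(n+1)[V^\nu(n^{1/\alpha}z) - \tfrac{1}{\alpha}\log n - \log|z|] \to \gamma|z|^{-\alpha}$ respectively, locally uniformly.

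For the near-origin statement, the rescaled process $\{n^{1/\alpha}x_i\}$ is a DPP with kernel $\tilde K_n(z,w) = n^{-2/\alpha}K_n(n^{-1/\alpha}z, n^{-1/\alpha}w)$. Substituting $r = n^{-1/\alpha}s$ in $c_{k,n}$ and factoring $e^{-2(n+1)V^\nu(0)}$ throughout makes the powers of $n$ cancel, so that
\[\tilde K_n(z,w) = \sum_{k=0}^{n-1} b_{k,n}(z\bar w)^k \, e^{-(n+1)[V^\nu(n^{-1/\alpha}z) - V^\nu(0)]}\, e^{-(n+1)[V^\nu(n^{-1/\alpha}w) - V^\nu(0)]},\]
with $b_{k,n}^{-1} = 2\pi\int_0^\infty s^{2k+1}e^{-2(n+1)[V^\nu(n^{-1/\alpha}s) - V^\nu(0)]}\mathrm{d}s \to b_k^{-1}$ by dominated convergence, the monotonicity of $r \mapsto V^\nu(r) - V^\nu(0)$ together with the global bound $V^\nu(r) \ge \log r$ providing the integrable envelope. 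Locally uniform convergence $\tilde K_n \to K$ then yields weak convergence of the DPPs. The at-infinity statement is obtained by exactly the same scheme after rescaling by $n^{1/\alpha}$ and reindexing $\ell := n-1-k$; the factors $|zw|^{-(n+1)}$ produced by $e^{-(n+1)V^\nu(n^{1/\alpha}z)}\sim n^{-(n+1)/\alpha}|z|^{-(n+1)}e^{-\gamma|z|^{-\alpha}}$ are absorbed into the modulus-one phase gauge $\phi(z) = (\bar z/|z|)^{n+1}$, which leaves the correlation functions unchanged, and a direct computation gives $\phi(z)\overline{\phi(w)}(z\bar w)^{n-1-\ell}|zw|^{-(n+1)} = (z\bar w)^{-(\ell+2)}$, matching the announced kernel on $\mathbb{C}\setminus\{0\}$.

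For the convergence of the maximum, $\{\max_k|x_k|/n^{1/\alpha}\le \lambda^{1/\alpha}t\}$ is exactly the event that the rescaled process places no point in $\{|z|>\lambda^{1/\alpha}t\}$. Since $K_n$ is a projection kernel whose range is spanned by the monomials (on which rotations act diagonally), the number of points in any rotation-invariant set decomposes as a sum of independent Bernoulli variables, one per monomial. Combining the kernel convergence at infinity with a uniform tightness bound of the form $\int_{|z|>T}\tilde K_n(z,z)\mathrm{d}\ell(z)\to 0$ as $T\to\infty$ (obtained from crude estimates on $c_{k,n}$ for $k$ close to $n-1$) transfers the Bernoulli decomposition to the limiting DPP on $\mathbb{C}\setminus\{0\}$. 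Its Bernoulli parameters are computed from the orthonormal basis $\sqrt{b_k}\,z^{-(k+2)}e^{-\gamma|z|^{-\alpha}}$, and the change of variable $v = r^{-\alpha}$ in $\int_{\lambda^{1/\alpha}t}^\infty r^{-(2k+3)}e^{-2\gamma r^{-\alpha}}\mathrm{d}r$ produces the upper incomplete Gamma; using $2\gamma(\lambda^{1/\alpha}t)^{-\alpha} = 2/(\alpha t^\alpha)$ and the index shift $j=k+1$ then recovers the stated product. I expect the main obstacle to be the uniform tightness estimates needed to pass from local kernel convergence to convergence of the full DPP (and of its maximum) on an unbounded window, rather than the asymptotic expansion of $V^\nu$ itself, which is essentially a one-variable Laplace-type computation.
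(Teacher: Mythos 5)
Your treatment of the near-origin case follows the paper's strategy (rescale the kernel, show $b_{k,n}\to b_k$, invoke Shirai--Takahashi for kernel-to-DPP convergence), but the dominated-convergence envelope you propose is not quite right. The ``global bound $V^\nu(r)\ge\log r$'' does not hold with the normalization $V^\nu(0)=0$: in fact $V^\nu(r)=\int_0^r \nu(D_s)s^{-1}\,\mathrm ds\le \int_0^1 \nu(D_s)s^{-1}\,\mathrm ds+\log r$ for $r>1$, so $V^\nu(r)\le \log r+C$, the opposite direction. What is true, and what the paper uses, is an \emph{asymptotic} lower bound $V^\nu(r)\ge\tfrac12\log r$ for $r$ larger than some $M$ (since $V^\nu(r)-\log r$ converges by Assumption~\ref{assumption1}), together with $V^\nu\ge$ const $>0$ on a middle compact interval by continuity and positivity. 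This forces a three-region splitting of $b_{k,n}^{-1}$ rather than a one-shot DCT; moreover, to upgrade the $k$-wise convergence $b_{k,n}\to b_k$ to uniform convergence of $\tilde K_n$ on compacts you need a \emph{summable} dominating sequence $B_k\ge b_{k,n}$ uniform in $n$ (the paper's Step~4, which produces $B_k$ via a lower bound on the integral over $[0,k^{1/\alpha}\varepsilon]$ and the root test). Your proposal does not address this, and without it locally uniform kernel convergence does not follow from coefficient convergence alone.

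For the at-infinity and maximum statements you take a genuinely different route: a direct gauge transformation $\phi(z)=(\bar z/|z|)^{n+1}$ with index reversal $\ell=n-1-k$, plus Bernoulli decomposition and an as-yet-unsupplied tightness bound $\int_{|z|>T}\tilde K_n(z,z)\,\mathrm d\ell(z)\to0$ uniformly in $n$. Your gauge computation $\phi(z)\overline{\phi(w)}(z\bar w)^{n-1-\ell}|zw|^{-(n+1)}=(z\bar w)^{-(\ell+2)}$ is correct, so this could in principle work, but you have correctly identified the tightness step as the real obstacle, and it is nontrivial: weak convergence of point processes on $\mathbb C\setminus\{0\}$ says nothing about mass escaping to infinity, which is precisely what controls the maximum. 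The paper avoids this entirely via the inversion equivariance (Theorem~\ref{JelliumEquivariance}): it proves only the origin case, then pushes forward by $z\mapsto 1/z$. After inversion the point process lives on all of $\CC$, the map $\mathcal X\mapsto\min\{|x|:x\in\mathcal X\}$ is continuous (Lemma~\ref{lemma:min}), and $\min|1/x_k| = 1/\max|x_k|$ deterministically, so the max result falls out with no tightness estimate at all. Adopting that symmetry reduction would let you delete the entire at-infinity/tightness/gauge discussion and keep only (a corrected version of) the origin argument.
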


Theorem \ref{BulkMinMax}
extends the corresponding result of Jiang and Qi \cite[Theorem 1]{JiangQi} on the spherical ensemble. Note that if $\nu$ has a positive density at the origin then the previous result applies with $\alpha =2$ and $\lambda$ equals to $\pi$
times the density at the origin. 
We recover the infinite Ginibre point process
in that case. 
The main interest of the result is not the explicit limiting random variables but its universality 
and the fact that no further
regularity is needed for \nolinebreak $\nu$.

\subsection{Results on random polynomials}
In this section, we present results on the extremal zeros of random polynomials associated to a background measure $\nu$ which are the counterparts of the results obtained for the jellium in the previous section. The results are very close to what was obtained before and are presented in the same order. 

\begin{theorem}[Outliers for random polynomials]\label{PolyThm}
Let $\nu$ be a probability measure satisfying \emph{Assumption} \ref{assumption1} and let
 $z_1,\dots,z_n$ be the zeros of a random polynomial $P_n$ 
 (given by 
 \eqref{RandomPoly}) such
 that $a_0$ satisfies \emph{Assumption} \ref{assumption2}. Let $U$ be a connected component of $\CC \setminus \nolinebreak \mathrm{supp}\, \nu$ and suppose
that it is either an open disk or the complement of a 
closed disk.

\textbf{Disk case:} If $U=D_R$
for some $R>0$, then
\[ \{ z_k 
\text{ such that } z_k \in D_R  \} 
\xrightarrow[n \to \infty]{\mathrm{law}} 
R \cdot \left\{ z \in \mathbb D
\text{ such that } \sum_{k=0}^{\infty} a_k z^k=0   \right\}.  \]

\textbf{Complement of a disk case:} 
If $U=\mathbb C 
\setminus \bar D_R$
for some $R>0$, then
\[ \{ z_k \text{ such that } z_k \in 
\CC \setminus \bar D_R  \} \xrightarrow[n \to \infty]{\mathrm{law}} R
\cdot \left\{ z \in \mathbb C \setminus \mathbb D
\text{ such that } \sum_{k=0}^{\infty} a_k \frac{1}{z^k}=0 
\right\}.  \]
\end{theorem}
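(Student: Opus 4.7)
The plan is to show that, after a natural rescaling sending $U$ to $\DD$, the random polynomial $P_n$ converges in law as a random analytic function to a canonical random Taylor series on $\DD$: in the disk case, to $\sum_{k\geq 0} a_k w^k$; in the complement case, to its ``reversed'' analogue. Once this is established, the conclusion on zero sets follows from Hurwitz's theorem, since Assumption~\ref{assumption2} forces $\limsup_{k\to\infty} |a_k|^{1/k}\leq 1$ almost surely, so the limit series is an honest analytic function on $\DD$ whose zeros are, almost surely, simple and discrete on every compact subset.

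\textbf{Disk case.} Since $\nu$ is rotationally invariant, $D_R$ is centered at the origin; on $D_R$, $V^{\nu}$ is harmonic and radial, hence equal to the constant $c:=V^{\nu}(0)=\int\log|w|\,{\rm d}\nu(w)$. Setting $w=z/R$, I define
\[
Q_n(w):=\|X^0\|_{n,\nu}\,P_n(Rw)=\sum_{k=0}^n c_{k,n}\,a_k\,w^k,\qquad c_{k,n}:=R^k\,\frac{\|X^0\|_{n,\nu}}{\|X^k\|_{n,\nu}}.
\]
Abusing notation by writing $\nu$ also for its radial distribution,
\[
\|X^k\|_{n,\nu}^2=\int_R^\infty r^{2k} e^{-2nV^{\nu}(r)}\,{\rm d}\nu(r)=R^{2k}e^{-2nc}\int_R^\infty(r/R)^{2k}e^{-2n(V^{\nu}(r)-c)}\,{\rm d}\nu(r).
\]
Because $V^{\nu}(r)-c$ is non-negative and vanishes only at $r=R$ (by radial monotonicity of $V^{\nu}$), Laplace's method localises the integral at $r=R$ and yields, for every fixed $k$, the asymptotic $\|X^k\|_{n,\nu}^2\sim R^{2k}e^{-2nc}I_n$ with $I_n$ independent of $k$, so that $c_{k,n}\to 1$. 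The crude lower bound $\|X^k\|_{n,\nu}^2\geq R^{2k}\int_R^{R+\varepsilon}e^{-2nV^{\nu}(r)}\,{\rm d}\nu(r)$ gives in addition a uniform bound $|c_{k,n}|\leq C$ for all $k\leq n$ and all $n$ large. Combined with $\limsup_{k\to\infty}|a_k|^{1/k}\leq 1$ almost surely, the family $\{Q_n\}$ is uniformly bounded on every compact subset of $\DD$ and converges pointwise to $\sum_{k\geq 0}a_k w^k$; Montel's theorem upgrades this to uniform convergence on compacta, and Hurwitz concludes.

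\textbf{Complement case.} Now $\mathrm{supp}\,\nu\subseteq\bar{D}_R$, so $V^{\nu}(r)=\log r$ for $r\geq R$ and, for $r\leq R$, $V^{\nu}(r)=\log r+\phi(r)$ with $\phi(r):=\int_r^\infty\log(t/r)\,{\rm d}\nu(t)\geq 0$ vanishing only at $r=R$. Substituting $z=R/u$, I set
\[
\tilde Q_n(u):=\frac{\|X^n\|_{n,\nu}}{R^n}\,u^n P_n(R/u)=\sum_{j=0}^n \tilde c_{j,n}\,a_{n-j}\,u^j,\qquad \tilde c_{j,n}:=R^{-j}\,\frac{\|X^n\|_{n,\nu}}{\|X^{n-j}\|_{n,\nu}}.
\]
Laplace's method applied to $\|X^{n-j}\|_{n,\nu}^2=\int_0^R r^{-2j}e^{-2n\phi(r)}\,{\rm d}\nu(r)$ gives $\tilde c_{j,n}\to 1$ with a uniform bound $|\tilde c_{j,n}|\leq C$. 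Since the $a_k$ are i.i.d., $(a_{n-j})_{j=0}^n$ is equal in law to $(a_j)_{j=0}^n$, so $\tilde Q_n$ is equal in law to $\sum_{j=0}^n \tilde c_{j,n}\,a_j\,u^j$, which converges in law to $\sum_{j\geq 0} a_j u^j$ on $\DD$ exactly as in the disk case. Pulling back zeros via $z=R/u$ from $\DD\setminus\{0\}$ to $\CC\setminus\bar{D}_R$ yields the stated limit $R\cdot\{z\in\CC\setminus\bar{\DD}:\sum_{k\geq 0}a_k/z^k=0\}$.

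\textbf{Main obstacle.} The delicate point is the Laplace asymptotic under only Assumption~\ref{assumption1}, without any regularity of $\nu$ beyond rotational invariance and $\int|\log|z||\,{\rm d}\nu<\infty$. The absence of a density of $\nu$ near $r=R$ must be compensated by the strict monotonicity of $V^{\nu}$ on $(R,\infty)$ (resp.\ of $\phi$ on $(0,R)$) and a crude control on $\nu(\{r:V^{\nu}(r)-c\leq\varepsilon\})$; this localisation is enough because $(r/R)^{2k}$ (resp.\ $r^{-2j}$) is continuous at $r=R$ for each fixed $k$ (resp.\ $j$). Once the asymptotic is secured, the passage from coefficient-wise convergence to zero-set convergence via Montel and Hurwitz is standard.
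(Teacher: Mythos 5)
Your argument is correct and follows the same strategy as the paper: normalize $P_n$ so that its Taylor coefficients converge to $a_k$, bound the coefficients uniformly so that uniform convergence on compacta can be deduced, and pass to zero sets via Hurwitz. The paper first reduces to $R=1$ and to the disk case using the equivariance theorems (Theorems~\ref{PolyEquivariance} and~\ref{BergmanEquivariance}), whereas you inline both: the scaling $z\mapsto Rw$ in the disk case, and in the complement case the reversal $z=R/u$ together with the distributional shuffle $(a_{n-j})_j\overset{\mathrm d}{=}(a_j)_j$, which is precisely the content of the proof of Theorem~\ref{PolyEquivariance} spelled out on the spot. One small simplification you missed: since $|z|\geq R$ on $\mathrm{supp}\,\nu$, one has $\|X^k\|_{n,\nu}^2\geq R^{2k}\|X^0\|_{n,\nu}^2$ directly, hence $|c_{k,n}|\leq 1$ with no Laplace input needed for the domination step; the paper uses exactly this bound (in its normalization, $c_n/\langle X^k,X^k\rangle_{n,\nu}\leq 1$). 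The genuinely delicate step you flag at the end — the Laplace localisation at $r=R$ for a possibly very irregular $\nu$ — is where the paper spends most of its effort: it establishes $\lim_n\frac1n\log c_n=0$, shows the probability measures $\nu_n\propto e^{-2nV^\nu}\nu$ are tight and rotationally invariant so that $\nu_n\Rightarrow\nu_{S^1}$, and handles the tail using $V^\nu(r)\geq\tfrac12\log r$ for $r$ large. Your sketch ("strict monotonicity of $V^\nu$ plus crude control of $\nu(\{V^\nu-c\leq\varepsilon\})$") amounts to the same computation; in particular one must still check separately that $\int_A^\infty r^{2k}e^{-2nV^\nu}{\rm d}\nu/\|X^0\|^2_{n,\nu}\to 0$ for $A$ large, which uses both the $\log$-growth of $V^\nu$ at infinity and the sub-exponential lower bound on $\|X^0\|^2_{n,\nu}$ coming from $\nu([R,R+\delta])>0$. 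As for the last step, replacing dominated convergence by a Montel-plus-pointwise-convergence argument is a valid alternative, though slightly heavier than needed. Overall: same approach, different packaging of the equivariance, and a localisation step that you correctly identify as the crux but leave at the level of a sketch where the paper writes it out.
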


\begin{remark}[On the limiting random series]
Arnold \cite{Arnold} showed 
that the random power series 
\[\sum_{k=0}^{\infty} a_k z^k\]
has a radius of convergence equal to $1$ almost surely as soon as $\EE(\log(1+|a_0|))<\infty$. 
In the specific case of complex
Gaussian coefficients, Peres and Vir\'ag \cite{PeresVirag} showed that its zeros follow the same law as the Bergman point process of the unit disk. Hence, in the case of complex Gaussian coefficients, this result is exactly the same as the one for the jellium.
\end{remark}

\begin{corollary}[Zero of extremal modulus]\label{CorollairePoly}
Let $\nu$ be a probability measure satisfying \emph{Assumption} \ref{assumption1} and let 
$z_1,\dots,z_n$ be the zeros of a random polynomial $P_n$
 (given by 
 \eqref{RandomPoly}) such
 that $a_0$ satisfies \emph{Assumption} \ref{assumption2}. Let $R$ be the outer radius of the support of $\nu$, then 
\[ \max_{k \in \{1, \dots, n\}} |z_k| \xrightarrow[n \to \infty]{\mathrm{law}} R \, \max 
\left\{|z| 
\text{ such that } 
z \in \CC \setminus \DD \mbox{ and }
\sum_{k=0}^{\infty}a_k \frac{1}{z^k}=0
\right\} . \]
In particular,
if $a_0$ is a complex Gaussian random variable,
we have
\[\max_{k \in \{1, \dots, n\}} |z_k| 
\xrightarrow[n \to \infty]{\rm{law}}
R \, z_{\infty},\]
where $z_{\infty}$ is a random variable
taking values in $[1,\infty)$ with cumulative distribution function
			\[ \PP(z_{\infty}<t) 
			= \prod_{k=1}^{\infty} 
			\left(1- t^{-2k}\right).
			\] 

\end{corollary}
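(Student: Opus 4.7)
The plan is to derive the corollary from the point-process convergence already proved in Theorem \ref{PolyThm}, upgrade it to convergence of the maximum modulus, and then identify the limit explicitly in the complex Gaussian case.

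First, I would apply Theorem \ref{PolyThm} with $U = \CC \setminus \bar D_R$, the unbounded connected component of $\CC \setminus \mathrm{supp}\,\nu$. This yields convergence in law of the outer point process $\{z_k : |z_k| > R\}$ to $R \cdot \mathcal Z$, where $\mathcal Z$ is the zero set in $\CC \setminus \bar\DD$ of $f(z) = \sum_{k \geq 0} a_k z^{-k}$. The candidate limit is thus $\max\{|z| : z \in R \cdot \mathcal Z\}$. By Assumption \ref{assumption2} combined with Arnold's theorem (recalled in the remark following Theorem \ref{PolyThm}), the series $g(w) = f(1/w) = \sum_{k \geq 0} a_k w^k$ has radius of convergence $1$ almost surely, so $g$ is analytic near $0$ with $g(0) = a_0 \neq 0$ a.s. Hence the zeros of $g$ in $\DD$ stay a positive distance from $0$ a.s., and the candidate maximum is finite a.s.

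The essential issue is that the vague convergence of point processes given by Theorem \ref{PolyThm} does not by itself control escape of zeros to infinity. I would handle this via the inverted polynomial $\tilde P_n(w) = w^n P_n(1/w)$, whose zeros are the reciprocals $1/z_k$. Having a zero of $P_n$ with $|z_k| > T$ is equivalent to having a zero of $\tilde P_n$ in $D_{1/T}$, and an adaptation of the disk case of Theorem \ref{PolyThm} (applied to $\tilde P_n$, whose leading structure near $w=0$ matches that of the limiting series $g$) shows that the number of such zeros is tight in $n$ and tends to $0$ in probability as $T \to \infty$. This simultaneously produces tightness of $\max_k|z_k|$ and, combined with Theorem \ref{PolyThm}, lets me pass to the limit inside the $\max$ functional to conclude that $\max_k|z_k| \to R \cdot \max\{|z| : z \in \mathcal Z\}$ in distribution. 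I expect the tightness step to be the main obstacle, as one must rule out the pathological scenario where a single zero drifts to infinity with non-negligible probability.

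For the explicit formula in the Gaussian case, I would invoke Peres and Vir\'ag \cite{PeresVirag}: the zeros of $g(w) = \sum_{k \geq 0} a_k w^k$ with i.i.d.\ standard complex Gaussian coefficients form the Bergman point process $\mathcal B_{\DD}$. The change of variable $w = 1/z$ converts the maximum modulus of $\mathcal Z$ into $1/\min\{|w| : w \in \mathcal B_{\DD}\}$. Using the Bernoulli decomposition of the counting function of $\mathcal B_{\DD}$ recalled in the footnote after Definition \ref{def:BergmanPointProcess}, namely that $\#(\mathcal B_{\DD} \cap D_r)$ has the law of $\sum_{k \geq 1} \mathrm{Bern}(r^{2k})$ with independent summands, I obtain
\[
\PP\bigl(\min\{|w| : w \in \mathcal B_{\DD}\} > 1/t\bigr) = \prod_{k=1}^{\infty}\bigl(1 - t^{-2k}\bigr),
\]
which is exactly the claimed cumulative distribution function of $z_\infty$.
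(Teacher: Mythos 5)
Your strategy — invert via $\tilde P_n(w)=w^nP_n(1/w)$, apply the disk case of Theorem \ref{PolyThm}, and identify the Gaussian limit through Peres--Vir\'ag together with the Bernoulli decomposition — is the paper's own, and your c.d.f.\ computation is correct. The tightness obstacle you flag as the main difficulty is exactly what the paper's Lemma \ref{lemma:min} (continuity of $\min$ on $\mathcal C_{[0,1)}$ under vague convergence of point processes) is designed to handle: after inversion, $\min_k 1/|z_k|$ converges directly to the minimum modulus of the limiting point process, and since the latter is almost surely strictly less than one this coincides with the global minimum, so no separate tightness argument is required.
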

The limiting random variable that appears in the corollary is only explicitly known for complex Gaussian coefficients, where it is the maximum 
modulus of the Bergman point process of $D_R$, as in Corollary \ref{CorollaireJellium}. For general coefficients, 
this random variable, as well as 
$\max_{k \in \{1,\dots, n\}}|z_k|$ are heavy tailed  \cite{Butez3}. 
More precisely, if \[\varliminf_{t \to 0^+} t^{-m} \mathbb P(|a_0|<t) >0,\] then the $m$-th moment of $\max_{k \in \{1, \dots, n\}} |z_k|$ as well as the
$m$-th moment of its limit are infinite.
Figure \ref{MinKac} illustrates the convergence of
$(\max_{k \in \{1, \dots, n\}} |z_k|)^{-1}$ 
for Gaussian Kac polynomials (i.e.
when $\nu = \nu_{S^1}$ and $a_0$
is a complex Gaussian random variable)
towards the minimum modulus of the Bergman point process.

\begin{figure}[!h]
	\centering
	\includegraphics[width=.5\linewidth]{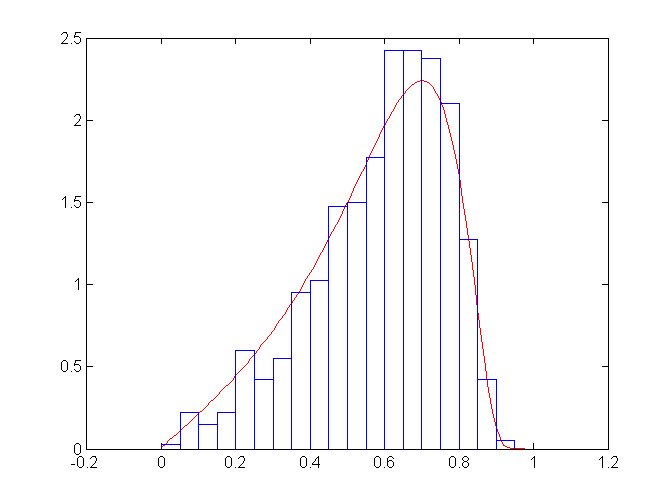}
	\caption{In blue: Histogram for the inverse of the largest root in modulus of Kac polynomials with complex Gaussian coefficients for $n=200$. 
	In red: Density of
	the minimum
	modulus of the Bergman point process
	of $\DD$.  } \label{MinKac}
\end{figure}

In the case where the support of $\nu$ is unbounded or contains the origin, we observe a phenomenon similar to the results of Theorem \ref{BulkMinMax} but for which the limiting random variable differs from 
the jellium case.
\begin{theorem}[Rescaled extremal roots of random polynomials]\label{BulkMinMaxPoly}
Let $\nu$ be a measure satisfying \emph{Assumption \ref{assumption1}} and
let
 $z_1,\dots,z_n$ be the zeros of a random polynomial $P_n$ 
 (given by 
 \eqref{RandomPoly}) such
 that $a_0$ satisfies \emph{Assumption} \ref{assumption2}.
 
{\bf At the origin:} If there exists $\alpha>0$ and $\lambda >0$ such that 
\begin{equation}
\label{eq:HypZero}
 \lim_{r \to 0} \frac{\nu(D_r)}{r^{\alpha}} = \lambda 
 \end{equation}
		then the point process $\{n^{1/\alpha}z_1,\dots, n^{1/\alpha}z_n \}$ converges
almost surely 
towards the roots of the 
following random entire function
			
			\[ f_{\alpha,\lambda}(z)= \sum_{k=0}^{\infty} \frac{a_k}{\Gamma(1+\frac{2k}{\alpha})^{1/2} } \left(\frac{\lambda}{\alpha}\right)^{k/\alpha} z^k,\] 
sometimes known as
the Mittag-Leffler random function.
		
{\bf At infinity: } If there exists $\alpha>0$ and $\lambda>0$ such that 
		\begin{equation}
		\label{eq:HypInfty2}
		 \lim_{r \to \infty } r^{\alpha} 
		\nu\, (\hspace{0.5pt} \CC \setminus D_r) = \lambda
		\end{equation}
		then
		the point process $\{n^{-1/\alpha}z_1,
		\dots,  n^{-1/\alpha}z_n\}$,
		seen as a point process
		on $\mathbb C \setminus \{0 \}$, 
		converges in law
		towards the inverse of the point
		process of the zeros of
		$f_{\alpha,\lambda}$.
		
\textbf{Convergence of the maximum:} Under the same hypothesis \eqref{eq:HypInfty2}, we also have
		\[\frac{1}{n^{1/\alpha}} 
		\max_{k \in \{1, \dots, n\}} |z_k| \xrightarrow[n \to \infty]{\mathrm{law}} 
		\max\{|z| \text{ such that } 
		z \in \CC\setminus \{0\} \mbox{ and }
		f_{\alpha,\lambda}(1/z)=0\}.\]
\end{theorem}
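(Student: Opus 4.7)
The plan is to reduce both statements to convergence of a suitably rescaled random polynomial, viewed as a random entire function, uniformly on compact subsets of $\mathbb{C}$ (respectively $\mathbb{C}\setminus\{0\}$), and then to invoke Hurwitz's theorem to transfer this into convergence of the zeros. For the at-origin statement I introduce
\[ Q_n(u) \;:=\; \frac{1}{C_n}\, P_n\!\left(\frac{u}{n^{1/\alpha}}\right) \;=\; \frac{1}{C_n}\sum_{k=0}^n \frac{a_k\, u^k}{n^{k/\alpha}\,\|X^k\|_{n,\nu}}, \]
where $C_n$ is a deterministic prefactor chosen so that the constant coefficient tends to $a_0$. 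The aim is to show $Q_n\to f_{\alpha,\lambda}$ almost surely, uniformly on compacta (up to an innocuous rescaling of the argument).

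\textbf{Asymptotics of the norms.} The central ingredient is the behaviour, for each fixed $k$ as $n\to\infty$, of
\[ \|X^k\|^2_{n,\nu} \;=\; \int_0^\infty r^{2k}\, e^{-2nV^\nu(r)}\, \mathrm dF(r), \qquad F(r):=\nu(\{|w|\le r\}). \]
For a radial $\nu$ one has the identity $V^\nu(r)=c+\int_0^r F(s)/s\,\mathrm ds$ with $c=\int\log|w|\,\mathrm d\nu(w)$. The hypothesis $F(r)\sim\lambda r^\alpha$ at $0$ therefore gives $V^\nu(r)=c+(\lambda/\alpha)r^\alpha+o(r^\alpha)$. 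The change of variables $r=y\,n^{-1/\alpha}$ localises the integral on $y$ of order one, and a Laplace/dominated-convergence argument yields
\[ \|X^k\|^2_{n,\nu} \;\sim\; e^{-2nc}\, n^{-2k/\alpha-1}\,\beta_k, \]
where $\beta_k$ is an explicit constant involving $\Gamma(1+2k/\alpha)$ and a power of $\lambda/\alpha$, matching (up to a $k$-independent factor) the squared modulus of the $k$-th Taylor coefficient of $f_{\alpha,\lambda}$.

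\textbf{Uniform convergence and Hurwitz.} Pointwise-in-$k$ convergence of the coefficients of $Q_n$ is then immediate. To upgrade to a.s. uniform convergence on compacta I need a uniform-in-$n$ tail bound on $\sum_{k\ge K}|\mathrm{coef}_k(Q_n)|\, M^k$. Two ingredients combine: Assumption \ref{assumption2} yields, via Borel--Cantelli, $\limsup_k|a_k|^{1/k}\le 1$ almost surely; and a uniform deterministic lower bound on $\|X^k\|_{n,\nu}$ valid for \emph{all} $k\in\{0,\ldots,n\}$ (not just fixed ones), obtained by restricting the defining integral to a thin annulus around the saddle point $r_k$ (solution of $F(r_k)=k/n$) and exploiting the monotonicity of $V^\nu$. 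Combined, these give $Q_n\to f_{\alpha,\lambda}$ uniformly on compacta almost surely, and Hurwitz's theorem over an exhausting family of open disks then yields almost sure weak convergence of the zero point processes.

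\textbf{At infinity and convergence of the maximum.} The at-infinity statement is handled by the reciprocal polynomial $\tilde P_n(w):=w^n P_n(1/w)=\sum_{j=0}^n a_{n-j}\,w^j/\|X^{n-j}\|_{n,\nu}$, whose small zeros correspond under $w\mapsto 1/w$ to the large zeros of $P_n$. Since $(a_0,\ldots,a_n)$ and $(a_n,\ldots,a_0)$ have the same law, up to relabelling the coefficients $\tilde P_n$ is another random polynomial with i.i.d. coefficients, and the same scheme applies: the dual expansion $V^\nu(r)=\log r+(\lambda/\alpha)r^{-\alpha}+o(r^{-\alpha})$ at infinity, obtained by integration by parts against $G(r)=1-F(r)$, produces an analogous asymptotic for $\|X^{n-j}\|^2_{n,\nu}$ at fixed $j$, hence convergence in law of (normalised) $\tilde P_n(w/n^{1/\alpha})$ to a function with the same zeros as $f_{\alpha,\lambda}$; transferring back via the homeomorphism $z\mapsto 1/z$ of $\mathbb C\setminus\{0\}$ yields the claim. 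The convergence of $n^{-1/\alpha}\max|z_k|$ does not follow immediately from point-process convergence (the max is not vaguely continuous), but the same uniform-on-compacta control together with $f_{\alpha,\lambda}(0)=a_0\ne 0$ a.s.\ provides tightness from below for the smallest zero of rescaled $\tilde P_n$, from which the continuous mapping theorem concludes. The main technical obstacle is expected to be the aforementioned uniform lower bound on $\|X^k\|_{n,\nu}$ for all $k\in\{0,\ldots,n\}$: since the hypothesis on $\nu$ is only local, one must split the range of $k$ into regimes ($k$ small, $k/n$ bounded away from $0$ and $1$, $k$ close to $n$) and treat each by a separate saddle-point or comparison argument.
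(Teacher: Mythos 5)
Your overall strategy is the same as the paper's: rescale the polynomial, compute the asymptotics of the norms $\langle X^k,X^k\rangle_{n,\nu}$ for fixed $k$ via the pushforward of $\nu$ under $z\mapsto n^{1/\alpha}z$, obtain a uniform-in-$n$ lower bound on these norms, apply dominated convergence (using Corollary~\ref{cor:ConvergenceRadius} to control the random series), and transfer to the zeros via Lemma~\ref{lemma:hurwitz}. The at-infinity statement and the maximum are handled via inversion in both treatments.

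There is one place where your proposal overcomplicates and misidentifies the difficulty. You flag the ``uniform lower bound on $\|X^k\|_{n,\nu}$ for all $k\in\{0,\dots,n\}$'' as the main obstacle, because the hypothesis on $\nu$ is only local and the saddle point $r_k$ with $F(r_k)=k/n$ can wander outside the region where $F(r)\approx\lambda r^\alpha$; you then propose a three-regime splitting. The paper avoids this entirely: rather than using the true saddle annulus, one bounds the integral from below by restricting to the (rescaled) annulus $D_{\varepsilon k^{1/\alpha}}\setminus D_{\varepsilon(k/2)^{1/\alpha}}$, which in the original $z$-coordinates is $D_{\varepsilon(k/n)^{1/\alpha}}\setminus D_{\varepsilon(k/2n)^{1/\alpha}}\subset D_\varepsilon$ precisely because $k\leq n$. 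Hence the annulus \emph{always} lies where $\nu(D_r)$ is pinned between $\tfrac34\lambda r^\alpha$ and $\tfrac54\lambda r^\alpha$ and where $V^\nu$ is comparable to $\gamma|z|^\alpha$, and a single estimate yields a majorant $B_k$ with $B_k^{1/k}\to 0$. No regime splitting is required.

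Two further minor differences. For the at-infinity case you propose to work directly with the reciprocal polynomial $w^nP_n(1/w)$ and a dual expansion of $V^\nu$ at infinity; the paper instead invokes the previously established equivariance result (Theorem~\ref{PolyEquivariance}), which packages exactly this computation, including the fact that $\|X^{n-j}\|_{n,\nu}$ corresponds to $\|X^j\|_{n,i_*\nu}$. Your direct route works but essentially reproves that theorem. For the convergence of the maximum, you correctly observe that vague convergence of the zero set does not by itself give the max, and propose tightness via $f_{\alpha,\lambda}(0)=a_0\neq 0$; the paper instead uses the fact that the minimum of the moduli is a continuous functional on $\mathcal{C}_{\mathbb{R}^+}$ (Lemma~\ref{lemma:min}), applied after inversion. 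Both are valid; the paper's lemma makes the step immediate.
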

When $\alpha=2$ and $\lambda = 2$, which occurs for elliptic polynomials, we have
\[ f_{2,2}(z)=\sum_{k=0}^{\infty} a_k \frac{1}{\sqrt{k!}}z^k.  \] 
and,
if $a_0$ is a complex standard Gaussian 
random variable,
this random function is known as the 
planar Gaussian Analytic function. 
For different $\alpha$ and $\lambda$ but still
complex Gaussian coefficients, the random Mittag-Leffler functions are studied for the rigidity of their zero set \cite{KiroNishry}.

\begin{remark}
 In the special case $\nu = \nu_{S^1}$,
 the uniform measure on the unit circle, Theorem \ref{PolyThm} and Corollary \ref{CorollairePoly} are just a direct consequence of the result of Arnold \cite{Arnold}, while the identification of the limiting point process for complex Gaussian coefficients is exactly the result of Peres and Vir\'ag \cite{PeresVirag}. For all the other models of random polynomials, our result is new. There is no hope to observe the Bergman point process for non-Gaussian coefficients, as it was already noticed in \cite[Section 5]{TaoVu}. See \cite{Butez3} for
	a discussion of this non-universality
	in the case of the Kac polynomials.
\end{remark}

It is possible to extend Theorem \ref{PolyThm}
to the case where $\nu(\mathbb C) > 1$. Indeed,
the same methods that
will be used to prove 
Theorem \ref{PolyThm} would allow us to prove
the straightforward generalization.
Since it escapes
the main models of interest in this article,
we will only state the case
of Weyl polynomials which have an easier
and shorter proof.

\begin{theorem}[Extremal particles for the Weyl polynomials]
\label{th:Weyl}

Let $(a_k)_{k \in \mathbb N}$ be
a sequence of  i.i.d. random variables satisfying \emph{Assumption} \ref{assumption2}.
Let	 
  \[P_n(z) = 
\sum_{k=0}^n 
\frac{\sqrt{n^k}}{\sqrt {k!}} a_k z^k.\]
If  $z_1,\dots,z_n$ 
are the zeros of $P_n$ then
\[ 
\{ z_k \text{ such that } |z_k|>1\}  \xrightarrow[n\to \infty]{\mathrm{ law }} 
\left\{ z \in \CC
\setminus \bar{\mathbb D }
\text{ such that } \sum_{k=0}^{\infty} a_k \frac{1}{z^k}=0 
\right\}. \] 
Furthermore,
we have the convergence of
the maxima. In particular,
when $a_0$
is a complex Gaussian 
random variable, for every $t \in [1,\infty)$
we have
		\[ 
	\lim_{n \to \infty}
	\PP(\max\{|z|: P_n(z) = 0\} \leq t)	
	= \prod_{k=1}^{\infty} \left(1-t^{-2k}\right). 
		\]
\end{theorem}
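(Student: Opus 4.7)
The plan is to reduce the statement to a convergence of random power series inside the unit disk, via the reciprocal polynomial. Setting $c_n:=\sqrt{n^n/n!}$ and
\[ g_n(w) := \frac{w^n}{c_n}\, P_n\!\left(\frac{1}{w}\right) = \sum_{j=0}^n \sqrt{\prod_{i=0}^{j-1}\!\Bigl(1-\frac{i}{n}\Bigr)}\, a_{n-j}\, w^j , \]
zeros of $P_n$ with $|z|>1$ are in bijection with zeros of $g_n$ with $0<|w|<1$. Because $(a_k)_{k\in\NN}$ is i.i.d., the reversal $(a_{n-j})_{0\le j\le n}$ has the same joint law as $(a_j)_{0\le j\le n}$, so $g_n$ is equal in distribution to
\[ \tilde g_n(w) := \sum_{j=0}^n c_j^{(n)}\, a_j\, w^j, \qquad c_j^{(n)} := \sqrt{\prod_{i=0}^{j-1}\Bigl(1-\frac{i}{n}\Bigr)}\in[0,1]. \]

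The next step is to show that $\tilde g_n$ converges almost surely, uniformly on every compact subset of $\DD$, to the random analytic function $f(w):=\sum_{j=0}^\infty a_j w^j$. Under Assumption~\ref{assumption2}, Arnold's theorem \cite{Arnold} gives $f$ a radius of convergence exactly $1$ almost surely, hence $\sum|a_j|r^j<\infty$ a.s.\ for every $r<1$. Since $c_j^{(n)}\to 1$ as $n\to\infty$ for each fixed $j$ and $0\le c_j^{(n)}\le 1$, splitting $\tilde g_n-f$ into a fixed head handled term by term and a geometric tail uniform in $n$ delivers the locally uniform convergence on $\DD$. Hurwitz's theorem then yields the weak convergence of the zero set of $\tilde g_n$, and hence of $g_n$, to that of $f$; pulling back through $z=1/w$ gives the first assertion of the theorem.

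For the convergence of $\max_k|z_k|$, the remaining ingredient is tightness at $z=\infty$, which corresponds to tightness at $w=0$ for the zeros of $\tilde g_n$. Since $\tilde g_n(0)=a_0=f(0)\neq 0$ almost surely by Assumption~\ref{assumption2}, Hurwitz's theorem applied on a small disk $\{|w|\le\varepsilon\}$ shows that with probability tending to one $\tilde g_n$ has no zero there; equivalently, no zero of $P_n$ has modulus exceeding $1/\varepsilon$. Combined with the outlier convergence this yields $\max_k|z_k|\to\max\{|z|:z\in\CC\setminus\{0\},\,f(1/z)=0\}$ in law. In the complex Gaussian case, Peres and Vir\'ag \cite{PeresVirag} identify the zero set of $f$ in $\DD$ with the Bergman point process $\mathcal{B}_{\DD}$, whose minimum modulus has cumulative distribution function $s\mapsto\prod_{k=1}^\infty(1-s^{2k})$ via the independent-Bernoulli decomposition of the number of points of a radial determinantal process already used for Corollary~\ref{CorollaireJellium}; inverting gives $\PP(\max_k|z_k|\le t)\to\prod_{k=1}^\infty(1-t^{-2k})$.

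The main technical point is the locally uniform convergence $\tilde g_n\to f$: the coefficients $c_j^{(n)}$ are only close to $1$ for $j=o(\sqrt n)$ and deteriorate for larger $j$, but on any compact $\{|w|\le r\}\subset\DD$ the geometric damping $r^j$ overwhelms this deterioration, reducing the argument to a routine dominated-convergence estimate. Everything else combines Arnold's theorem, Hurwitz's theorem, and the i.i.d.\ reversal of the coefficients.
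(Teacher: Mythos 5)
Your proof is correct and follows essentially the same path as the paper: reverse the i.i.d.\ coefficients, pass to the reciprocal polynomial $Q_n(w)=w^nP_n(1/w)$ (suitably normalized), observe that the resulting coefficients $c_j^{(n)}=\sqrt{\prod_{i<j}(1-i/n)}$ lie in $[0,1]$ and tend to $1$, invoke Arnold's theorem and dominated convergence for a.s.\ locally uniform convergence on $\DD$, and finish via Hurwitz's theorem and the Peres--Vir\'ag identification in the Gaussian case. The only cosmetic difference is in the convergence of the maximum: the paper appeals to the abstract continuity of the minimum on $\mathcal C_{[0,1)}$ (Lemma~\ref{lemma:min}), while you make the same tightness-at-$w=0$ argument explicit via Hurwitz and $\tilde g_n(0)=a_0\neq 0$ a.s.; the two are equivalent.
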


\subsection{Symmetries of the models}
The jellium and the zeros of random polynomials have a lot of symmetries, which make those models particularly interesting to study. The jellium, as well as the roots of random polynomials, are stable under the inversion map $i:\mathbb C \setminus \{0\}
\to \mathbb C \setminus \{0\}$,
given by
\[i(z) = \frac{1}{z},\]
and also by scaling. This means that under these transformations, a jellium associated to $\nu$ has the same law as a jellium associated to another measure $\nu'$. The same is true for zeros of random polynomials.

\begin{theorem}[Equivariance of the jellium] \label{JelliumEquivariance}
Let $\nu$ be a probability measure satisfying \emph{Assumption} \ref{assumption1} and 
let $(x_1,\dots,x_n)$ be 
a jellium associated to $\nu$. Then, for any $\lambda >0$, $(\lambda x_1, \dots, \lambda x_n)$ has the law of a jellium associated to $\nu(\cdot/\lambda)$ and $(i(x_1),\dots,i(x_n))$ has the law of a jellium associated to $i_*\nu$, the pushforward of $\nu$ by $i$.
\end{theorem}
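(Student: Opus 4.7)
The proof amounts to a change-of-variables computation in each case: rewrite the joint density of the transformed variables, verify that all $\lambda$- or $y$-dependent extra factors cancel, and absorb the remaining constants into the partition function $Z_n$. Two ingredients govern both cases: the complex Jacobian of the transformation (for a holomorphic $f$, the change of variable $y=f(x)$ gives $\mathrm{d}\ell_{\mathbb C}(y) = |f'(x)|^2 \mathrm{d}\ell_{\mathbb C}(x)$) and the transformation rules of the logarithmic potential $V^\nu$ under dilation and pushforward.

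\textbf{Scaling.} I would set $y_i = \lambda x_i$ and use the two elementary identities
\[
\log|y_i - y_j| = \log|x_i - x_j| + \log\lambda,
\qquad
V^{\nu(\cdot/\lambda)}(y) = V^\nu(y/\lambda) + \log\lambda,
\]
together with $\mathrm{d}\ell_{\mathbb C^n}(y) = \lambda^{2n}\mathrm{d}\ell_{\mathbb C^n}(x)$. Plugging these into \eqref{gaz} shows that the density of $(y_1,\dots,y_n)$ is proportional to the jellium density associated to $\nu(\cdot/\lambda)$, the $\lambda$-dependent scalars (coming from the $\binom{n}{2}$ pair contributions, the $n$ potentials and the Jacobian) being absorbed into the normalizer.

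\textbf{Inversion.} Setting $y_i = 1/x_i$ (recall that $x_i \neq 0$ almost surely), the Jacobian gives $\mathrm{d}\ell_{\mathbb C^n}(y) = \prod_i |y_i|^{-4}\,\mathrm{d}\ell_{\mathbb C^n}(x)$. The two key identities here are
\[
\log|1/y_i - 1/y_j| = \log|y_i - y_j| - \log|y_i| - \log|y_j|
\]
and, with $c := \int \log|u|\,\mathrm{d}\nu(u)$ (finite by Assumption \ref{assumption1}),
\[
V^\nu(1/y) = V^{i_*\nu}(y) - \log|y| + c.
\]
Summing these, the pair interaction contributes $+(n-1)\sum_i\log|y_i|$ to the Hamiltonian while the potential contributes $-(n+1)\sum_i\log|y_i|$; the remaining $c$ terms are constants absorbed into $Z_n$. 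The net coefficient of $\sum_i \log|y_i|$ is $-2$, so the Boltzmann weight acquires a factor $\prod_i|y_i|^4$, which exactly cancels the Jacobian $\prod_i|y_i|^{-4}$. What remains is the jellium density associated to $i_*\nu$, which indeed still satisfies Assumption \ref{assumption1} since $\int|\log|z||\,\mathrm{d}(i_*\nu)(z) = \int|\log|u||\,\mathrm{d}\nu(u) < \infty$.

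\textbf{Main point.} There is no real obstacle here: everything reduces to the arithmetic identity $(n-1)-(n+1)=-2$, which is exactly the exponent needed to balance the Jacobian of the inversion. This is precisely why the prefactor $(n+1)$, and not $n$, in front of $V$ in \eqref{gaz} is the natural one from the jellium viewpoint, making the inversion symmetry hold for every finite $n$.
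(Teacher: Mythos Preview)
Your argument is correct and follows the same change-of-variables route as the paper; the paper merely repackages the bookkeeping by writing the Gibbs weight as $e^{-2\sum_{i<j}G^{V^\nu}(x_i,x_j)}\,\mathrm d\pi^{\otimes n}$ with $G^{V^\nu}(x,y)=-\log|x-y|+V^\nu(x)+V^\nu(y)$ and $\mathrm d\pi=e^{-4V^\nu}\,\mathrm d\ell_{\mathbb C}$, and then checks that both $G$ and $\pi$ transform correctly under $i$, which hides your $(n-1)-(n+1)=-2$ arithmetic inside the exponent $4$ of $\pi$. One small slip: the Jacobian identity should read $\mathrm d\ell_{\mathbb C^n}(x)=\prod_i|y_i|^{-4}\,\mathrm d\ell_{\mathbb C^n}(y)$ (you wrote it the other way around), though you then use the correct factor $\prod_i|y_i|^{-4}$ in the cancellation.
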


The pushforward $i_*\nu$ 
is well-defined since the measure $\nu$ has no atom at $0$ due to \emph{Assumption \ref{assumption1}}. 
Furthermore,  
$(i(x_1),\dots,i(x_n))$ is well-defined almost surely
because the event
$\{\exists k \text{ such that } x_k=0 \}$
has zero probability
so that the statement of Theorem \ref{JelliumEquivariance} makes sense.

\begin{theorem}[Equivariance of the zeros of random polynomials] \label{PolyEquivariance}
Let $\nu$ be a probability measure satisfying \emph{Assumption} \ref{assumption1} and let 
$z_1,\dots,z_n$ be the zeros of a random polynomial 
(given by 
 \eqref{RandomPoly})
 associated to some \text{i.i.d.} sequence
 $(a_k)_{k \in \{0,\dots,n\}}$. 
 Then, for every $\lambda >0$, 
the point process formed by
$\lambda z_1, \dots, \lambda z_n$ 
has the same law as
the one
formed by the zeros
of the random polynomial  associated to 
$\nu(\cdot/\lambda)$ and to the same sequence
$(a_k)_{k \in \{0,\dots,n\}}$. 
In addition, the point process
formed by
$i(z_1),\dots,i(z_n)$ has the same law
 as the one formed by the
 zeros of the random polynomial
 associated to $i_*\nu$, the pushforward of $\nu$ by $i$, and to the same
 sequence $(a_k)_{k \in \{0,\dots,n\}}$. 
\end{theorem}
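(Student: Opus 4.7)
The plan is to prove both assertions by tracking how the orthonormal monomial basis $(R_{k,n})$ transforms under each operation on $\nu$, and then observing that the resulting random polynomial agrees with the naive transformation of $P_n^\nu$ either up to a nonzero multiplicative constant (scaling case) or up to a reversal of the coefficient sequence (inversion case), the latter being harmless by the exchangeability of the i.i.d.\ family $(a_k)$.

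First I would compute the transformation of the potential. A direct change of variables in \eqref{eq:potential} gives
\[ V^{\nu(\cdot/\lambda)}(z) = \log\lambda + V^\nu(z/\lambda), \]
while the identity $\log|z - 1/w| = \log|z| + \log|w - 1/z| - \log|w|$ together with $c := \int \log|w|\,\mathrm d\nu(w) \in \RR$ (finite by Assumption~\ref{assumption1}) yields, for $z \neq 0$,
\[ V^{i_*\nu}(z) = \log|z| + V^\nu(1/z) - c. \]
Plugging these back into \eqref{Scalar Product} and performing the corresponding changes of variables in the integrals produces the two key identities
\[ \langle X^k, X^k\rangle_{n,\nu(\cdot/\lambda)} = \lambda^{2k-2n}\, \langle X^k, X^k\rangle_{n,\nu}, \qquad \langle X^k, X^k\rangle_{n, i_*\nu} = e^{2nc}\, \langle X^{n-k}, X^{n-k}\rangle_{n,\nu}, \]
so the normalized monomials are $R_{k,n}^{\nu(\cdot/\lambda)}(z) = \lambda^n\, R_{k,n}^{\nu}(z/\lambda)$ and $R_{k,n}^{i_*\nu}(z) = e^{-nc}\, z^k/\sqrt{\langle X^{n-k}, X^{n-k}\rangle_{n,\nu}}$.

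The scaling case is then immediate: summing against $(a_k)$ gives $P_n^{\nu(\cdot/\lambda)}(z) = \lambda^n P_n^\nu(z/\lambda)$, so the zero set is simply dilated by $\lambda$. For inversion, I would compare
\[ P_n^{i_*\nu}(z) = e^{-nc}\sum_{k=0}^n a_k \frac{z^k}{\sqrt{\langle X^{n-k}, X^{n-k}\rangle_{n,\nu}}} \]
with the reciprocal polynomial $z^n P_n^\nu(1/z)$, whose zero set equals $\{i(z_1),\ldots,i(z_n)\}$; the reindexing $j = n-k$ shows that the latter equals $e^{nc}$ times $P_n^{i_*\nu}$ but with the coefficient sequence $(a_k)$ replaced by its reversal $(a_{n-k})$. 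Since $(a_0,\ldots,a_n)$ and $(a_n,\ldots,a_0)$ have the same joint distribution, these two random polynomials have the same law, which yields the desired equality in law of the zero sets.

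I expect the main obstacle to be bookkeeping rather than conceptual difficulty: one has to keep careful track of the constants $\lambda^n$ and $e^{nc}$, verify that Assumption~\ref{assumption1} is preserved under both operations (immediate for scaling, and for inversion because $\int |\log|1/w||\,\mathrm d\nu = \int |\log|w||\,\mathrm d\nu$), and, in the inversion case, invoke the i.i.d.\ hypothesis explicitly to absorb the reversal of coefficients — this last point is essential and would fail for non-exchangeable coefficients.
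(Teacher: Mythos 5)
Your proof is correct and takes essentially the same route as the paper's: both rest on how the inner products $\langle X^k,X^k\rangle_{n,\cdot}$ transform under scaling and under $\nu\mapsto i_*\nu$, and both use the reindexing $k\mapsto n-k$ coming from the reciprocal polynomial $z^n P_n(1/z)$. The only real difference is presentation — the paper phrases the inversion case as ``$P\mapsto z^n P(1/z)$ is an isometry between $(\mathbb{C}_n[X],\langle\cdot,\cdot\rangle_{n,\nu})$ and $(\mathbb{C}_n[X],\langle\cdot,\cdot\rangle_{n,i_*\nu})$ that permutes monomials'' and then just says this completes the proof, whereas you unpack the same fact into explicit formulas and spell out the final step (that the reversal $(a_k)\mapsto(a_{n-k})$ is harmless by exchangeability of the i.i.d.\ coefficients). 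Making that exchangeability step explicit is a genuine improvement in clarity — the paper only alludes to it with the phrase ``needed only in the general non-Gaussian case'' — and you are right that it is exactly where the i.i.d.\ hypothesis enters.
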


\begin{remark}[Equivariance under M\"obius transformations]
If we allow non-radial $\nu$,
the jellium is also equivariant under translations. This implies the equivariance of the jellium with respect to all M\"obius transformations. Zeros of random polynomials with complex Gaussian coefficients are also equivariant under translations, hence under 
all M\"obius transformations. For general coefficients and non-radial measures $\nu$, the model of random polynomials depends on the choice of basis
and this basis would have to be transformed
accordingly for the equivariance to hold.
\end{remark}

The next result is standard and it is contained,
for instance, in
\cite[Section 5.4]{HoughKrisPeresVirag}
where a connection to Gaussian analytic functions 
is made.
It follows from the equivariance of the 
Bergman kernel together with
the change of variables formula in
Lemma \ref{lem:ChangeOfVariables}.

\begin{theorem}[The Bergman point process is conformally invariant]\label{BergmanEquivariance}
	Let $U_1$ and $U_2$ be open sets in $\CC$
	and suppose that 
	there exists a biholomorphism $\varphi$ 
	from $U_1$ to $U_2$. Then,
	\[ \varphi(\mathcal{B}_{U_1}) 
	\overset{\rm law}{=} \mathcal{B}_{U_2}.\]
\end{theorem}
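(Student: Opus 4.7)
The plan is to combine the well-known conformal covariance of the Bergman kernel with the determinantal change-of-variables formula (Lemma \ref{lem:ChangeOfVariables}), and then to observe that the resulting kernel differs from $K_{U_2}$ only by a unimodular gauge, which leaves all correlation functions (and hence the law of the point process) unchanged.

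First, I would recall the transformation rule for the Bergman kernel under a biholomorphism. If $\varphi\colon U_1 \to U_2$ is biholomorphic, then pulling back an orthonormal basis $(e_k)_{k\geq 0}$ of the Bergman space $A^2(U_2)$ via $\varphi' \cdot (e_k\circ\varphi)$ produces an orthonormal basis of $A^2(U_1)$ (this is exactly the change of variables in the $L^2$ norm, using that the real Jacobian of a holomorphic map is $|\varphi'|^2$). Summing the corresponding reproducing kernels yields
\begin{equation*}
K_{U_1}(z,w) = \varphi'(z)\,\overline{\varphi'(w)}\,K_{U_2}(\varphi(z),\varphi(w)), \qquad z,w\in U_1.
\end{equation*}

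Next I would apply the change-of-variables formula for determinantal point processes. Since $\mathcal{B}_{U_1}$ is determinantal on $U_1$ with kernel $K_{U_1}$ with respect to Lebesgue measure, and since the real Jacobian of $\varphi$ is $J(z) = |\varphi'(z)|^2$, the pushforward $\varphi(\mathcal{B}_{U_1})$ is determinantal on $U_2$ with kernel (with respect to Lebesgue on $U_2$)
\begin{equation*}
\widetilde{K}(y,y') \;=\; \frac{K_{U_1}(\varphi^{-1}(y),\varphi^{-1}(y'))}{|\varphi'(\varphi^{-1}(y))|\,|\varphi'(\varphi^{-1}(y'))|}.
\end{equation*}
Substituting the transformation rule from the first step gives
\begin{equation*}
\widetilde{K}(y,y') \;=\; \psi(y)\,\overline{\psi(y')}\;K_{U_2}(y,y'), \qquad \psi(y):=\frac{\varphi'(\varphi^{-1}(y))}{|\varphi'(\varphi^{-1}(y))|}.
\end{equation*}

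Since $|\psi|\equiv 1$, the factor $\psi(y)\overline{\psi(y')}$ is a unitary gauge: in every determinant $\det(\widetilde{K}(y_i,y_j))_{1\le i,j\le k}$, the contributions of $\psi(y_i)$ on the $i$-th row and $\overline{\psi(y_i)}$ on the $i$-th column cancel. All correlation functions of $\varphi(\mathcal{B}_{U_1})$ therefore coincide with those of the determinantal process with kernel $K_{U_2}$, which is $\mathcal{B}_{U_2}$. Since a determinantal process is characterized by its correlation functions, this proves $\varphi(\mathcal{B}_{U_1}) \stackrel{\mathrm{law}}{=} \mathcal{B}_{U_2}$.

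The only point requiring care is the bookkeeping of Jacobians and gauge factors: one must keep straight that the Jacobian $|\varphi'|^2$ appearing when changing reference measures combines with the holomorphic factors $\varphi'(z)\overline{\varphi'(w)}$ from the Bergman transformation rule so as to leave a purely unimodular multiplier. Once this is observed, the theorem is immediate.
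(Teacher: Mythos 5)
Your argument is correct and mirrors the paper's proof: both combine the Bergman kernel transformation rule $K_{U_1}(z,w) = \varphi'(z)\,K_{U_2}(\varphi(z),\varphi(w))\,\overline{\varphi'(w)}$ with the change-of-variables formula for determinantal point processes (Lemma \ref{lem:ChangeOfVariables}). The only difference is cosmetic: you write the transformed kernel with the real Jacobian factor $|\varphi'|^{-1}$ and are therefore left with a unimodular gauge $\psi(y)\overline{\psi(y')}$ that you must then argue leaves all correlation determinants unchanged, whereas the paper states Lemma \ref{lem:ChangeOfVariables} using the complex derivative $(\varphi^{-1})'$, so the chain-rule identity $(\varphi^{-1})'(x)\,\varphi'(\varphi^{-1}(x))=1$ cancels the holomorphic factors from the Bergman transformation rule exactly and yields $K_{U_2}$ on the nose, with no gauge bookkeeping needed.
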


These theorems combined together allow us to reduce
the proofs to the study of the point processes
near the origin.

\begin{remark}[On the disk
and the complement of a disk]
\label{Rem:DiskAndComplement}
Notice that the Bergman kernel
of $D_R \setminus \{0\}$
is the restriction of
the Bergman kernel of $D_R$ since
a square integrable singularity
is a removable singularity.
This implies that
the restriction of 
$\mathcal B_{D_R}$ 
to $D_R \setminus \{0\}$ is
$\mathcal B_{D_R \setminus\{0\}}$.
In particular, by applying
Theorem \ref{BergmanEquivariance},
we can say that
\[i(\mathcal B_{D_R})
	\overset{\rm law}{=}
	 \mathcal B_{
\CC \setminus \bar D_R}\]
where $i(\mathcal B_{D_R})$ makes
sense since $0 \notin 
\mathcal B_{D_R}$
almost surely or, strictly speaking,
$i(\mathcal B_{D_R})$ means  
the inverse of the restriction of
$\mathcal B_{D_R}$ to 
$D_R\setminus\{0\}$. This can also be
obtained
by using the explicit formulas given
in Definition 
\ref{def:BergmanPointProcess}.

\end{remark}

\section{Comments and perspectives}

\subsection{Related results}

A nice introduction to the theory of Coulomb
gases can be found in
\cite{SerfatyReview}.
Coulomb gases are usually studied
for strongly confining potentials.
\cite{Rider} showed
that the particles
of a Ginibre ensemble 
converge
towards the closed unit disk and 
that the particle farthest 
from zero exhibits Gumbel fluctuations.
This result
has been further
generalized and
different cases
have been found in \cite{ChafaiPeche,Seo,
JiangQi,ChangLiQi,GuiQi,Lacroix,GarciaZelada3,
ChafaiGarciaZeladaJung}, always
in the radial determinantal setting.
Very recently, \cite{Ameur} showed a control
of the distance between the particles
and the support of the equilibrium measure
for general temperatures 
and not necessarily radial potentials.

A universal limit point process at
a point  in the bulk where the equilibrium measure
has a positive density has been established,
for instance, in \cite{BermanBulk}.
Further behaviors in the bulk are
studied in \cite{AmeurSeo,AmeurKangSeo2,
GarciaZelada3}
and a nice condition has been given
for a radial case in 
Theorem \ref{BulkMinMax}
where the regularity outside
the origin is not needed.
Limiting point process
at the edge have been found in
\cite{AmeurHardEdge,AmeurKangMakarov,
HedenmalmWennman,GarciaZelada3,AmeurKangSeo}. 
To our knowledge,
this is the first time that the limiting 
behavior of the point process for 
weakly confining potentials has been
studied outside the bulk.

\subsection{Open questions}
For non-radial measures $\nu$ at inverse temperature $2$, we expect our results to generalize: for any
connected component $U$ of $\CC \setminus \text{supp }\nu$, simply connected, the outliers should converge towards the Bergman point process $\mathcal{B}_U$ in the jellium case as well as 
in the case of the zeros of random polynomials with complex Gaussian coefficients. For general coefficients, for a good choice of basis, we expect to see the zeros of a random function of the form
\[ z \mapsto \sum_{k=0}^{\infty} a_k \varphi(z)^k \]
where $\varphi$ is a conformal map from $U$ to the unit disk.

If we do not assume that the inverse temperature 
is $2$, all the results presented in this article fall. We hope that similar results hold for any inverse temperature $\beta$. In dimension one, 
the Sine point process and the Airy point process have $\beta$ counterparts which generalizes them to any temperature. 
See \cite{ValkoVirag} and  \cite{RamirezRiderVirag}.
We can dream of a generalization to any $\beta$ of the Bergman point process.

For random polynomials, the study of the outliers in the case where $\nu$ is not radial seems hard to study. One may try to 
understand the behavior of the orthogonal polynomials $R_{k,n}$ associated to an inner product
\[ \langle P, Q \rangle_{n,\nu} = 
\int P(z) \overline{Q(z)} e^{-2nV^{\nu}(z)}{\rm d}\nu(z) \]
which is in general a difficult question. 
After it is understood, 
the study of the outliers could be carried out by studying the asymptotics of the covariance kernel of the Gaussian field $(P_n(z))_{z \in \CC}$
\[ K_n(z,w)=\sum_{k=0}^n R_{k,n}(z) 
\overline{R_{k,n}(w)} \]
outside of the support of $\nu$.

\section{Proof of the equivariance results}
We start by proving 
the equivariance for the jellium, then for the zeros of random polynomials and finally for the Bergman point processes. We remark that these theorems and 
their proofs
are geometric in nature and that they can be
nicely explained by using the language of
complex line bundles on a regular setting.

\begin{proof}[Proof of Theorem \ref{JelliumEquivariance}]
	The equivariance under scaling is straightforward. We only prove the equivariance with respect to the inversion.
	Let $(x_1,\dots,x_n)$ be a jellium associated to a probability measure $\nu$ satisfying \emph{Assumption } \ref{assumption1}. This means that it follows the law
	\begin{equation}\label{gaz2}
	\frac{1}{Z_n} \exp 
	\left(-2  \left[ -\sum_{i<j} \log |x_i-x_j| + 
	(n+1) \sum_{i=1}^n V^{\nu}(x_i)  \right]
	\right) {\rm d}\ell_{\mathbb{C}^n}(x_1, \dots, x_n)
	\end{equation}
	where $Z_n$ is a normalization constant. 	
	Define 
	\[G^{V^{\nu}}(x,y) = -\log|x-y| + V^{\nu}(x) + V^{\nu}(y)\]
	and define the positive measure $\pi$ by 
	${\rm d}\pi=e^{-4V^{\nu} }{\rm d}\ell_{\mathbb{C}}$.
	Using these definitions we can write
	\begin{align}
	\exp 
	&\left(-2  \left[ -\sum_{i<j} \log |x_i-x_j| + 
	(n+1) \sum_{i=1}^n V^{\nu}(x_i)  \right]
	\right) 
	{\rm d}\ell_{\mathbb{C}^n}(x_1, \dots, x_n)	
	\nonumber	\\
	&= \exp \left(-2\sum_{i<j} G^{V^{\nu}}(x_i,x_j) \right)
	{\rm d}\pi^{\otimes_n}(x_1,\dots x_n)
	\label{eq:ExpTimesPi}.
	\end{align}
We define the function $\tilde{V}$ as\[ \tilde{V}(z) = {V^{\nu}} \left(\frac{1}{z} \right) + \log |z|.	\]
By a straightforward calculation, 
we obtain that
\[G^{V^{\nu}}(i(x),i(y))= 
-\log|x-y| + \tilde V(x) + \tilde V(y)
=:G^{\tilde V}(x,y)\]
and that
$\tilde \pi := i_*\pi $ (the pushforward measure
of $\pi$ by $i$) is given by
\[{\rm d}\tilde \pi = e^{-4\tilde V } 
{\rm d}\ell_{\mathbb{C}}.\]
In summary, the `inverse' of 
$e^{-2\sum_{i<j} G^{V^\nu}(x_i,x_j)} $ is 
$e^{-2\sum_{i<j} G^{\tilde V}(x_i,x_j)} $
and the `inverse' of $\pi$ is $\tilde \pi$
so that the inverse of
\eqref{eq:ExpTimesPi}
is
\begin{align*}
&\exp \left(-2\sum_{i<j} G^{\tilde V}(x_i,x_j)\right)
 \mathrm d
\tilde \pi^{\otimes_n}(x_1,\dots,x_n)		\\
& \quad \quad \quad =
	\exp 
	\left(-2  \left[ -\sum_{i<j} \log |x_i-x_j| + 
	(n+1) \sum_{i=1}^n \tilde V(x_i)  \right]
	\right) 
	{\rm d}\ell_{\mathbb{C}^n}(x_1, \dots, x_n).
	\end{align*}
We finish the proof of the theorem by noticing that
\[ \forall z \in \mathbb{C}\setminus \{0\}, \quad V^{i_*\nu}(z) = 
	V^{\nu}\left(\frac{1}{z}\right) 
	+ \log|z| -V^\nu(0).  \]
	Hence, $\tilde{V}$ differs from $V^{i_*\nu}$ by a constant.
	One can remove the constant 
	$
	V^\nu(0)$ from the definition of the potential as it may enter into the normalizing constant associated to this model.
\end{proof}

\begin{proof}[Proof of Theorem \ref{PolyEquivariance}]
The equivariance under scaling is a straightforward calculation. For the inversion, let $P_n$ be 
given by \eqref{RandomPoly}
	associated to $\nu$
	and to an \text{i.i.d.} sequence
	$(a_k)_{k \in \{1,\dots,n\}}$. We show that the random polynomial
	$Q_n$ defined by
	\[ Q_n(z) = z^n
	P_n \left(\frac{1}{z}\right) \]
	has the same law as the one given by $(5)$
	associated to the measure $i_*\nu$
	and to the same sequence 	
	$(a_k)_{k \in \{1,\dots,n\}}$.

	By a change of variables formula, it can be
	seen
	that the application 
	$^*:\mathbb{C}_n[X] \to \mathbb{C}_n[X]$ that to each polynomial
	$P\in \mathbb{C}_n[X]$ 
	associates
	the polynomial $P^*
	\in \mathbb{C}_n[X]$
	given by
	\[P^*(z)=z^nP(1/z)\] is an
	isometry between 
	$ \mathbb{C}_n[X]$ with the inner product
	defined by
	\[
	\langle P,Q \rangle_{n,\nu}
	= \int_\mathbb{C} P \overline{Q} e^{-2n V^{\nu}}
	{\rm d}\nu\]
	and	$ \mathbb{C}_n[X]$ with the inner product
	defined by
	\[
	\langle P,Q \rangle_{n,i_*\nu}
	=\int_\mathbb{C} P \overline{Q} 
	e^{-2n V^{i_*\nu}} d i_*\nu.\]
	By also noticing
	that $*$ preserves the monomials,
	needed only in the general non-Gaussian case,
	the proof is completed.		
\end{proof}

\begin{proof}[Proof of Theorem \ref{BergmanEquivariance}]
We start by recalling an important 
relation satisfied by Bergman kernels on 
different open sets.
Let $U_1$ and $U_2$ be two open sets and $\varphi$ be a biholomorphism from $U_1$ to $U_2$. Then, if $K_{U_1}$
 is the Bergman kernel of $U_1$ and
 $K_{U_2}$ is the Bergman kernel of $U_2$, we have, by
 \cite[Theorem 16.5]{Bell}, 
\[ \forall z ,w \in U_1 \quad K_{U_1}(z,w) = 
\varphi'(z) K_{U_2}(\varphi(z),\varphi(w)) \overline{\varphi'(w)}  . \]
We conclude by an application of the change of 
variables formula for determinantal point processes given in 
Lemma \nolinebreak \ref{lem:ChangeOfVariables}.

\end{proof}

\section{Proof of the main results}
We start by proving a key lemma which will be essential in several of the proofs. It gives a very tractable formula for the potential of radial measures.
\begin{lemma}[Useful formula for the potential] \label{Formula potential}
	Let $\nu \in \mathcal P(\mathbb{C})$
	be a rotationally invariant probability measure
	such that
	$\int_{\CC \setminus \mathbb D} \log |x| {\rm d}\nu(x) < \infty$.
	Then 
	\[V^{\nu}(z) =
	\int_1^{|z|} \frac{\nu(D_r)}{r} {\rm d}r
	+ \int_{\CC \setminus \mathbb D}
	\log |x| {\rm d}\nu(x)
	\]
	where $D_r$ denotes the open disk of radius $r$.
\end{lemma}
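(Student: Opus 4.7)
The plan is to reduce the two-dimensional integral defining $V^\nu$ to a one-dimensional computation using the rotational invariance of $\nu$. The starting point is the classical mean-value identity
\[
\int_0^{2\pi} \log|z - re^{i\theta}|\,\frac{d\theta}{2\pi} = \log\max(|z|, r),
\]
which is obtained from Jensen's formula or a direct Fourier expansion. Letting $\tilde\nu$ denote the pushforward of $\nu$ under $w \mapsto |w|$, the rotational invariance of $\nu$ combined with Fubini gives
\[
V^\nu(z) = \int_{[0,\infty)} \log\max(|z|, r)\, d\tilde\nu(r),
\]
and the integrability assumption $\int |\log|w|| \, d\nu(w) < \infty$ ensures convergence throughout.

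I see two natural routes from here. The first is to split the domain at $r = |z|$ to write
\[
V^\nu(z) = \log|z|\cdot \nu(\bar D_{|z|}) + \int_{\{|w|>|z|\}} \log|w|\, d\nu(w),
\]
and then to apply Fubini to $\int_1^{|z|} \nu(D_r)/r \, dr$ (exchanging $dr$ with the integral against $\tilde\nu$) to show that the right-hand side of the lemma equals the same expression. The second, more analytic route is to observe that $V^\nu(z)$ depends only on $|z|$, so setting $v(r) = V^\nu(r)$ one has $v'(r) = \nu(D_r)/r$ at every $r$ that is not an atom of $\tilde\nu$; integrating from $1$ to $|z|$ and computing $v(1) = \int_{\mathbb{C} \setminus \mathbb{D}} \log|x|\, d\nu(x)$ directly gives the formula.

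The only points requiring care are the treatment of the boundary circle $\{|w| = |z|\}$ when $\nu$ charges it (handled consistently by the identity $\log\max(|z|,|z|)=\log|z|$), the validity of Fubini (justified by the assumed integrability of $\log|w|$ on $\mathbb{C}\setminus\mathbb{D}$ and the trivial integrability near the origin), and the case $|z| < 1$, where $\int_1^{|z|}$ should be read as $-\int_{|z|}^1$. No genuine obstacle is expected: this is essentially a bookkeeping exercise designed to package $V^\nu$ in a form that later proofs can exploit.
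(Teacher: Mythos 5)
Your first route is essentially the paper's proof: your mean-value identity $\int_0^{2\pi}\log|z-re^{i\theta}|\,\frac{d\theta}{2\pi}=\log\max(|z|,r)$ is exactly the potential $V^{l_r}$ of the uniform measure on the circle of radius $r$, which the paper arrives at via the disintegration $\nu=\int_0^\infty l_r\,d\mu(r)$ and the explicit formula for $V^{l_r}$, and the subsequent split at $r=|z|$ followed by a Fubini swap to relate $\int_{[|z|,\infty)}\log r\,d\mu(r)$ to an integral of $\nu(D_r)/r$ (or $\nu(\mathbb{C}\setminus D_r)/r$) against $dr$ is the same algebra, just organised with the complementary set. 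Your second route, differentiating $v(r)=V^\nu(r)$ under the integral sign to get $v'(r)=\nu(D_r)/r$ a.e.\ and recovering $v$ by absolute continuity, is a genuinely different argument—closer in spirit to solving the radial Poisson equation—and it is a perfectly valid alternative provided you justify absolute continuity of $v$ (e.g.\ by noting that $r\mapsto\log\max(r,s)$ is Lipschitz on any interval $[a,b]\subset(0,\infty)$ uniformly in $s$, or by a convexity argument). One small imprecision: you invoke the two-sided integrability $\int|\log|w||\,d\nu(w)<\infty$ (Assumption 1 of the paper), but the lemma only assumes integrability of $\log|x|$ on $\mathbb{C}\setminus\mathbb{D}$; this stronger hypothesis is not needed since $\nu(D_r)\le 1$ keeps $\int_1^{|z|}\nu(D_r)/r\,dr$ finite for all $z\ne 0$ regardless of behaviour near the origin, which is precisely why the paper is careful to state the weaker hypothesis and to handle the possible non-integrability of $\log|z-\cdot|$ by a limiting argument over annuli.
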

This formula is 
standard and can be seen
to be related to
Poisson-Jensen formula \cite[Theorem II.4.10]{SaffTotik}.

\begin{remark}[The potential is defined up to a constant.] \label{V=0 inside}
	We recall that 
	one can choose to add a constant to the potential $V^{\nu}$ without changing the law \eqref{gaz}. Adding a constant will only change the normalizing constant $Z_N$. The potential $V^{\nu}$ 
	can be modified so that
	it is equal to 
	zero at the unit circle and then
	\[ \forall z \in \mathbb{C} \quad V^{\nu}(z) = \int_1^{|z|} \frac{\nu(D_r)}{r} {\rm d}r. \]
	In fact, $V^\nu$ defined in this way
	satisfies
	Poisson's equation 
	with source $2\pi\nu$ even if 
	the actual logarithmic
	potential \eqref{eq:potential} does not make sense. 
	Nevertheless, it is only
	when the condition of 
	Lemma \ref{Formula potential}
	is satisfied that $V^\nu$
	satisfies condition \eqref{potential}.
	We emphasize 
	again that this representation of the potential will be very helpful in the rest of the article.
\end{remark}

\begin{proof}[Proof of Lemma \ref{Formula potential}]
	
	Since $\nu$ is radial, the disintegration theorem\footnote{Conditional expectation in probabilist language.} \cite[Theorem 5.3.1]{AmbrosioGigliSavare} allows us to write
	\[\nu = \int_0^\infty l_r {\rm d}\mu(r)\]
	where $l_r \in \mathcal P(\mathbb{C})$ is the uniform
	probability measure on $C_r$, the circle centered
	at $0$ of
	radius $r$, and
	$\mu$ is a probability measure on $[0,\infty)$ characterized by
	\[ \forall a ,b >0 \quad \mu((a,b)) = \nu( \{ a<|z|<b \}  ) .\]
	This decomposition of the measure means that for any $f$ positive measurable function 
	or integrable with respect to $\nu$ we have
	
	\[ \int_\mathbb{C} f(z) {\rm d}\nu(z) = \int_0^\infty 
	\left(\int_{C_r} f(w)dl_r(w) 
	\right){\rm d}\mu(r).   \]
	Using this relation to compute the potential of the measure $\nu$
	we have that
	for every $z\in \mathbb{C}$
	\begin{align*}
	V^{\nu}(z)= \int_{\mathbb{C}}
	\log|z-w|{\rm d}\nu(w) =& \int_0^{\infty} \left(
	\int_{C_r} \log |z-w| dl_r (w)
	\right){\rm d}\mu(r) \\ =&
	\int_0^\infty V^{l_r}(z) {\rm d}\mu(r).
	\end{align*}
	where $V^{l_r}$ is the potential of the uniform measure on $C_r$. In fact, since the integrability
	of $\log|z-\cdot|$ is not yet known 
	we may proceed by a limiting argument
	by first integrating over the complement
	of an open annulus that contains $z$.
	But, since $V^{l_r}$ can be 
	computed explicitly and is equal to \cite[p.29]{Ransford}
	\[V^{l_r}(z) = \begin{cases}
	\log r 	&\text{ if }	 	|z| \leq r		\\
	\log |z|	&\text{ if }		|z| > r	
	\end{cases} \]
	we are able to complete
	the limiting argument.		
	Hence we obtain
	\begin{align}
	V^{\nu}(z) &= \int_{[|z|,\infty)}
	\log r \, {\rm d}\mu (r)
	+  \int_{[0,|z|)}\log|z|  
	{\rm d}\mu(r) 				\nonumber			\\
	&= \int_{[|z|,\infty)}
	\log r \, {\rm d}\mu (r)
	+ \log|z| \nu(D_{|z|})
	\label{eq:firstpotential}
	\end{align}
	where the last term is not there
	if $z=0$.
	Notice that,
	by \eqref{eq:firstpotential},
	the lemma is already proven
	for $|z|=1$.
	Suppose that $z \neq 0$.
	 Let us notice that Fubini's theorem implies 
	\begin{align*}
	\int_{|z|}^\infty \frac{1}{r} 
	\nu(\hspace{0.5pt}  \mathbb C \setminus D_r) {\rm d}r
	& = 
	\int_{\mathbb R^+}
	1_{r\geq  |z|}  \frac{1}{r}
	\left(\int_{\mathbb R^+} 1_{s\geq r}  {\rm d}\mu(s) 
	\right){\rm d}r 
	\\& 
	= \int_{\mathbb R^+} 1_{s \geq |z|} 
	\left(\int_{|z|}^s \frac{1}{r} {\rm d}r \right)\, 
	{\rm d}\mu(s)
	\\ &
	= \int_{[|z|,\infty)} \log s \,  
	{\rm d}\mu(s) - \log|z| \nu(\hspace{0.5pt} \CC \setminus 
	D_{|z|}).
	\end{align*}
	Then, by replacing this equality in
	\eqref{eq:firstpotential}, we obtain
	\begin{align*}
	V^{\nu}(z) &= \log|z| \left[
	\,
	\nu(D_{|z|}) + \nu(\hspace{0.5pt} \CC \setminus D_{|z|})
	\,
	\right]
	+
	\int_{|z|}^\infty 
	\frac{\nu(\hspace{0.5pt} \CC \setminus D_r)}{r} {\rm d}r	\\
	&=	\log|z|
	+
	\int_{|z|}^1 \frac{\nu(\hspace{0.5pt} \CC \setminus D_r)}{r} {\rm d}r
	+
	\int_1^\infty \frac{\nu(\hspace{0.5pt} \CC \setminus D_r)}{r} {\rm d}r	\\
	&=	\log|z|
	+
	\int_{|z|}^1 \frac{1-\nu(D_r)}{r} {\rm d}r
	+
	\int_1^\infty \frac{\nu(\hspace{0.5pt} \CC \setminus D_r)}{r} {\rm d}r	\\
	&=	\int_1^{|z|} \frac{\nu(D_r)}{r}{\rm d}r
	+
	\int_1^\infty \frac{\nu(\hspace{0.5pt} \CC \setminus D_r)}{r} {\rm d}r	\\
	&=	\int_1^{|z|} \frac{\nu(D_r)}{r}{\rm d}r
	+
	\int_{[1,\infty)} \log r \, {\rm d}\mu(r)
	\end{align*}
	where the last equality is obtained
	by taking $|z|=1$. The case $z=0$ 
	follows the same argument.
	
\end{proof}
\subsection{Results for the jellium}
\subsubsection{Proof of Theorem \ref{JelliumThm}}
\begin{proof}[Proof of Theorem \ref{JelliumThm}]
Due to the equivariance of the jellium, stated in Theorem \ref{JelliumEquivariance}, and the equivariance of the Bergman point process 
from Theorem \eqref{BergmanEquivariance}
together with 
Remark \ref{Rem:DiskAndComplement}, it suffices to assume that $U$, the connected component of $\CC \setminus \text{supp }\nu $, is the open unit disk, which can be written as 
\[S^1 \subset \mathrm{supp}\, \nu \subset 
\CC \setminus \mathbb{D}.\]
	
\newpage	
	
\textbf{Step 1: Kernel of the Coulomb gas}
	
	\noindent
	Let $\nu$ be a measure satisfying 
	\emph{Assumption \ref{assumption1}} 
	such that $S^1 \subset \mathrm{supp}\, \nu \subset \CC \setminus \mathbb{D}$. 
	Let $(x_1,\dots,x_n)$ be a jellium associated to $\nu$. Notice that, due to Lemma \ref{Formula potential}, $V^{\nu}$ is constant in the unit disk, which we set to be equal to $0$
	(see Remark \ref{V=0 inside}). The point process $\{x_1,\dots,x_n  \}$ is determinantal, because it is a Coulomb gas in the plane with inverse temperature $2$. It is associated to the kernel
	$K_n:\mathbb C \times \mathbb C \to \mathbb C $
	defined by
	\[K_n(z,w) = \sum_{k=0}^{n-1} b_{k,n} 
	z^k \bar w^k e^{-(n+1)V^{\nu}(z)} 
	e^{-(n+1)V^{\nu}(w)}\]
	where
	\[(b_{k,n})^{-1}
	=\int_\mathbb C |z|^{2k} e^{-2(n+1)V^{\nu}(z)} 
	{\rm d}\ell_{\mathbb{C}}(z).  \]
	The point process $\mathcal{I}_n=\{x_k : |x_k|<1  \}$ is the restriction to the open unit disk of the point process $\{x_1,\dots,x_n \}$, and is also a determinantal point process, with kernel given by the restriction of $K_n$
to $\mathbb D\times \mathbb D$. We will also denote this kernel
by $K_n$. 
By \cite[Proposition 3.10]{ShiraiTakahashi},
stated in Proposition \ref{prop:ConvergenceOfDPP} 
for convenience, in order to prove the convergence of the sequence of point processes $(\mathcal{I}_n)_{n \in \NN^+}$ towards the Bergman point process
of $\mathbb D$, it is enough to prove that the sequence of kernels $(K_n)_{n \in \NN^+}$ converges uniformly on compact subsets of $\mathbb D \times \mathbb D$ towards
	\[ K(z,w) = \frac{1}{\pi} \frac{1}{(1-z\bar{w})^2} = \sum_{k=0}^{\infty} \frac{k+1}{\pi}z^k \bar{w}^k. \]
	In fact, the proof will work,
	and thus the theorem is true, as soon as 
	the radial potential
	$V$ satisfying \eqref{potential}
	\emph{is zero inside of the closed unit disk
		and positive outside of it}. 
	
	\textbf{Step 2: Convergence of the
		coefficients}
	
	\noindent
	First, let us notice that 
	\begin{align*}
	(b_{k,n})^{-1}
	&=\int_\mathbb C |z|^{2k} e^{-2(n+1)V^{\nu}(z)} 
	{\rm d}\ell_{\mathbb{C}}(z)
	=2\pi\int_0^\infty r^{2k+1} e^{-2(n+1)V^{\nu}(r)} 
	{\rm d}r										\\
	&=
	2\pi\int_0^1 r^{2k+1} e^{-2(n+1)V^{\nu}(r)} 
	{\rm d}r
	+
	2\pi\int_1^\infty r^{2k+1} e^{-2(n+1)V^{\nu}(r)} 
	{\rm d}r,
	\end{align*}
	where we use $V^\nu(r)$ to denote
	$V^\nu$ evaluated at any point
	of norm $r$. Since the potential $V^{\nu}$ is equal to $0$ inside the unit disk, one can compute the first term
	\[2\pi\int_0^1 r^{2k+1} e^{-2(n+1)V(r)} 
	{\rm d}r=
	2\pi\int_0^1 r^{2k+1} 
	{\rm d}r
	=\frac{\pi}{k+1}.
	\]
	For the second term, let us prove that
	\[\lim_{n \to \infty}
	\int_1^\infty r^{2k+1} e^{-2(n+1)V^{\nu}(r)}
	 {\rm d}r
	= 0.\]
	But this is a consequence of Lebesgue's dominated
	convergence theorem where we use the bound
	\[r^{2k+1} e^{-2(n+1)V^{\nu}(r)}
	\leq r^{2k+1} e^{-2(k+2)V^{\nu}(r)}\]
	for $k \leq n-1$. The fact that
	$r^{2k+1} e^{-2(n+1)V^{\nu}(r)}$ goes
	to zero when $r >1$ can be seen from the
	fact that $V^{\nu}(r)>0$ 
	which in turn can be seen
	from the formula in Lemma \ref{Formula potential} as
	follows. Let $r>1$ and write
	\[V^{\nu}(r) =
	\int_1^{r} \frac{\nu(D_s)}{s}{\rm d}s.\]
	If $V^{\nu}(r)$ were zero the integrand 
	$\nu(D_s)$ would be zero for almost every
	$s \in [1,r]$ which is impossible because
	$\nu(D_s)>0$ for $s>1$.
	
	\noindent In summary, we obtain
	$\lim_{n \to \infty} b_{k,n}
	= \frac{k+1}{\pi}.$
	
	\textbf{Step 3: Convergence
		of the kernels}
	
	\noindent
	Let us fix $\rho \in (0,1)$ then for any $z$, $w$ inside the disk of radius $\rho$ we have
	\[
	|K_n(z,w) - K(z,w)|  \leq \sum_{k=0}^{\infty} 
	\left|b_{k,n} - \frac{k+1}{\pi}\right| |z|^k |w|^k\leq\sum_{k=0}^{\infty} 
	\left|b_{k,n} - \frac{k+1}{\pi}\right|
	\rho^{2k} . \]
	The right-hand term converges to zero as $n$ goes to infinity by an application of Lebesgue's dominated convergence theorem, noticing that
	\[ \forall k ,n \quad 0 \leq b_{k,n} \leq \frac{k+1}{\pi}.  \]
	which implies
	\[ \left|b_{k,n} - \frac{k+1}{\pi}
	\right|\rho^{2k} \leq 
	2\left( \frac{k+1}{\pi} \right)\rho^{2k}. \]
		By \cite[Proposition 3.10]{ShiraiTakahashi} the point process
	$\mathcal{I}_n$
	converges
	towards the Bergman point process $\mathcal B_{\mathbb{D}}$.
	\end{proof}

\begin{proof}[Proof of Corollary \ref{CorollaireJellium}]
In the case where $U$ is the open unit disk, the point process of the outliers $\mathcal{I}_n$ converges towards
	$\mathcal B_{\mathbb{D}}$.
	By the continuity
	of the minimum (Lemma \ref{lemma:min} in the appendix)
	we obtain
	that 
	the minimum of the norms of $\mathcal{I}_n $
	converges to
	the minimum of the norms of $B_{\mathbb{D}}$.
	But the limit of the minimum
	of the norms of $\mathcal{I}_n $
	coincides with the limit of the minimum of
	$\{|x_1|,\dots,|x_n|\}$
	since the latter
	limit is bounded by $1$.
	
	Thanks to \cite[Theorem 4.7.1]{HoughKrisPeresVirag}, the set of norms 
of the Bergman point process 
of $\mathbb D$ has the same law as $\{ U_k^{1/2k}, k \in \NN 
	\setminus\{0\} \}$, with the ${U_k}'s$ being independent uniform random variables on $[0,1)$. This immediately implies that 
		\[\PP\left(\, \inf\{|z| : z\in
	\mathcal {B}_{\mathbb D}\}
	\leq t \right)=
	1-\prod_{k=0}^{\infty} (1-t^{2k}).
	\]
We conclude the proof by applying the inversion and a scaling.
\end{proof}

\subsubsection{Proof of Theorem \ref{BulkMinMax}}
\begin{proof}
We prove the first part of the theorem, and we deduce the rest thanks to the equivariance under inversion of the jellium.

\noindent
	\textbf{Proof at the origin.}
	
	\textbf{Step 1: Kernel of the rescaled Coulomb gas}
	
	\noindent Let $(x_1,\dots,x_n)$ be a
	jellium associated to $\nu$. 
	Then the point process $\left\{ \frac{x_1}{n^{1/\alpha}} , \dots, \frac{x_n}{n^{1/\alpha}}  \right\}$ is a determinantal point process associated to the kernel
	\begin{align*}
	\tilde K_n(z,w) & = 
	\frac{1}{n^{2/\alpha}} 
	K_n\left( \frac{z}{n^{1/\alpha}}, 
	\frac{w}{n^{1/\alpha}}\right) \\
	& = \sum_{k=0}^n b_{k,n} z^k \bar{w}^k 
	e^{-(n+1)V^{\nu}
		\left(\frac{z}{n^{1/\alpha}}\right)}
	e^{-(n+1)V^{\nu}
		\left(\frac{w}{n^{1/\alpha}}\right)} 
	\end{align*}
	where
	\[ b_{k,n}^{-1} = 
	2 \pi \int_0^\infty r^{2k+1} e^{-2(n+1)V^{\nu}
		\left(\frac{r}{n^{1/\alpha}} \right)} 
		{\rm d}r. \]
	This may be seen, for instance, 
	by the change of variables formula
	in Lemma $\ref{lem:ChangeOfVariables}$.
	We will prove that the sequence of kernels 
	$(\tilde K_n)_{n \in \NN}$ converges uniformly 
	on compact subsets of $\mathbb{C} \times \mathbb{C}$ towards the kernel
	\begin{equation}
	\label{eq:K}
	K(z,w)= \sum_{k=0}^{\infty} 
	b_k z^{k} \bar{w}^k 
	e^{-\gamma |z|^{\alpha}}e^{-\gamma |w|^{\alpha}} 
	\end{equation}	 
	where
	\[\gamma=\frac{\lambda}{\alpha}
	\, \, \mbox{ and } \, \, b_k^{-1} = 
	2 \pi \int_0^\infty 
	r^{2k+1} e^{-2\gamma r^{\alpha}} {\rm d}r.  \]
	To prove this convergence, we will first prove that $\lim_{n \to \infty} b_{k,n} =b_k$, then we will find a sequence $(B_k)_{k \in \NN}$ such that $\sum_{k=0}^\infty B_k r^k$ 
has an infinite radius of convergence and $b_{k,n} \leq B_k$ for every 
	\nolinebreak $n$. This will imply the uniform convergence of the kernels.
	
	\textbf{Step 2: Properties
		satisfied by the potential}
	
	\noindent Let $\nu$ be a rotationally invariant probability measure such that there exists $\alpha >0$ and 
	$\lambda >0$ with
	\[ \lim_{r \to 0} \frac{\nu(D_r)}{r^{\alpha}} = \lambda.  \]
	Since the potential of $\nu$ can be written as
	\[ V^{\nu}(z) = \int_{1}^{|z|} \frac{\nu(D_r)}{r} {\rm d}r, \]
	we obtain that 
	\[ \frac{ V^{\nu}(r) - V^{\nu}(0)}{r^{\alpha}} \xrightarrow[r \to 0]{} \frac{\lambda}{\alpha}.  \]
	and that $V^{\nu}(r) > V^{\nu}(0)$ for every
	$r > 0$.
	From now on, we will assume that $V^{\nu}(0)=
	\nolinebreak 0$, 
	since adding a constant to the potential $V^{\nu}$ does not change the law \eqref{gaz}. 
	Using this new convention, we have
	\begin{equation}
	\label{eq: potential at zero}
	\lim_{r \to 0} \frac{V^{\nu}(r)}{r^{\alpha}} = \frac{\lambda}{\alpha} =: \gamma 
	\end{equation}
	and 
	\begin{equation}
	\label{eq: positive potential}
	V^\nu(r) > 0 \mbox{ for every } r>0.
	\end{equation}
	In fact, those two properties of the potential
	are the only properties needed, apart from
	\eqref{potential},
	for the theorem
	to be true.

	\textbf{Step 3: Convergence of the
		coefficients}
	
	\noindent We prove that $b_{k,n}$ converges to
	$b_k$ as $n$ goes to infinity
	or, equivalently,
	\[\lim_{n \to \infty}
	\int_0^\infty r^{2k+1} e^{-2(n+1)V^{\nu}
		\left(\frac{r}{n^{1/\alpha}} \right)} {\rm d}r
	=\int_0^\infty 
	r^{2k+1} e^{-2\gamma r^{\alpha}} {\rm d}r
	\]
	We divide the integral in
	three parts.
	\begin{align*} 
	&\int_0^\infty  r^{2k+1} 
	e^{-2(n+1) V^{\nu}
		\left( \frac{r}{n^{1/\alpha}} \right)}
	{\rm d}r \\
	&=	\int_0^{n^{1/\alpha}	\varepsilon } r^{2k+1} 
	e^{-2(n+1) V^{\nu}\left( \frac{r}
		{n^{1/\alpha}} \right)} {\rm d}r			\\
	&+
	\int_{n^{1/\alpha} \varepsilon  }
	^{n^{1/\alpha} M } r^{2k+1} 
	e^{-2(n+1) V^{\nu}\left( \frac{r}{
			n^{1/\alpha}} \right)} {\rm d}r			\\
	&+
	\int_{n^{1/\alpha} M }^\infty r^{2k+1} 
	e^{-2(n+1) 
		V^{\nu}\left( \frac{r}{
			n^{1/\alpha}} \right)} {\rm d}r	
	\end{align*}
	where we have chosen 
	$\varepsilon > 0$
	such that
	$\frac{\gamma}{2} r^\alpha \leq V^{\nu}(r)$
	for $|r| \leq \varepsilon$
	and 
	$M > \varepsilon$ such that
	$\frac{1}{2}\log|r| \leq V^{\nu}(r)$
	for $|r| \geq M$.
	
	We also know, by the continuity
	and the positivity outside $0$ of $V^{\nu}$ that
	there exists a constant $C>0$ such that
	$C \leq V^{\nu}(r)$ for $r \in [\varepsilon, M]$.
	
	Since \[e^{-2(n+1) V^{\nu}\left( \frac{r}
		{n^{1/\alpha}} \right)} 
	1_{[0,n^{1/\alpha} \varepsilon 	]}(r)
	\leq e^{-\frac{(n+1)}{n} \gamma r^\alpha }
	\leq e^{- \gamma r^\alpha },\]
	we can
	use Lebesgue's dominated convergence theorem 
	for the first term. The second term is bounded by
	\[\int_{n^{1/\alpha} \varepsilon }
	^{n^{1/\alpha} M } r^{2k+1} 
	e^{-2(n+1) C} {\rm d}r\]
	which goes exponentially fast to zero when
	$n \to \infty$.	
	
	The last integral is bounded by
	\begin{align*}
	\int_{n^{1/\alpha} M}^\infty r^{2k+1} 
	e^{-2(n+1)  \log \left( \frac{r}
		{n^{1/\alpha}}\right)} {\rm d}r
	& =n^{(2k+2)/\alpha}
	\int_{M}^\infty \rho^{2k+1} 
	e^{-2(n+1)  \log \rho } {\rm d}\rho \\	
	& =n^{(2k+2)/\alpha}
	\int_{M}^\infty \rho^{2k - 2n -1} {\rm d}\rho 
	\xrightarrow[n \to \infty]{} 0.
	\end{align*}

	\textbf{Step 4: Convergence
		of the kernels}
	
	\noindent Notice that, uniformly on compact sets,
	\[ (n+1) V^{\nu} \left(\frac{|z|}{n^{1/\alpha}} \right)  \xrightarrow[n \to \infty]{}\gamma |z|^{\alpha} 
	\]
	due to \eqref{eq: potential at zero}.
	Then, it is left to prove that
	\[\lim_{n \to \infty}
	\sum_{k=0}^n b_{k,n} z^k \bar{w}^k 
	= 
	\sum_{k=0}^{\infty} b_k z^{k} \bar{w}^k 
	\]
	uniformly on compact sets
	of $\mathbb{C} \times \mathbb{C}$.
	
	Take
	$\varepsilon>0$ such that 
	$2\lambda r^\alpha \geq V(r)$
	for $|r| \leq \varepsilon$.	 Then,
	for every positive integers $k,n$ such that
	$k \leq n-1$,
	\begin{align*}
	\int_0^\infty  r^{2k+1} 
	e^{-2(n+1) V^{\nu}\left( \frac{r}{n^{1/\alpha}}
		\right)} {\rm d}r 
	&\geq
	\int_0^{n^{1/\alpha} \varepsilon} r^{2k+1} 
	e^{-4\frac{n+1}{n} \gamma r^\alpha} {\rm d}r				\\
	&\geq
	\int_0^{n^{1/\alpha} \varepsilon } r^{2k+1} 
	e^{-8 \gamma r^\alpha} {\rm d}r								\\
	&\geq
	\int_0^{k^{1/\alpha} \varepsilon } r^{2k+1} 
	e^{-8 \gamma r^\alpha} {\rm d}r.
	\end{align*}
	This suggests
	us to define $B_k$ by
	\[(B_k)^{-1} = 
	\int_0^{k^{1/\alpha} \varepsilon } r^{2k+1} 
	e^{-8 \gamma r^\alpha} {\rm d}r
	=k^{(2k+2)/\alpha}
	\int_0^{\varepsilon } \rho^{2k+1} 
	e^{-8 \gamma k \rho^\alpha} \mathrm d
	\rho	
	=k^{(2k+2)/\alpha}
	\int_0^{\varepsilon } 
	e^{k \left( 2\log \rho -8\gamma \rho^\alpha
		\right)}
	\rho \, \mathrm d
	\rho	.\]
	By the root test,
	for $\sum_{k=0}^\infty B_k x^k$ to
	converge for every $x>0$, we need
	that
	\[\lim_{k \to \infty}
	\frac{1}{k}\log \left[ (B_k)^{-1} \right]
	=\infty.\] We know that
	$\lim_{k \to \infty} \frac{1}{k} \log 
	\left[ k^{(2k+2)/\alpha} \right]
	= \infty$ so that it would be enough to 
	prove that
	$\frac{1}{k}\log\int_0^{\varepsilon } 
	e^{k \left( 2\log \rho -8\gamma \rho^\alpha
		\right)}
	\rho\, \mathrm d
	\rho	$
	is bounded from below. In fact,
	by the Laplace's method we know that
	\[\lim_{k \to \infty}
	\frac{1}{k}\log\int_0^{\varepsilon } 
	e^{k \left( 2\log \rho -8\gamma \rho^\alpha
		\right)}
	\rho\,  \mathrm d
	\rho	
	=
	\sup_{\rho \in [0,\varepsilon]}\{2 \log \rho - 
	8 \gamma\rho^{\alpha} \}
	> -\infty.
	\]
	Take $R > 0$ and suppose $|z|, |w| \leq R$.	 We have
	\[\left|\sum_{k=0}^{n-1} b_{k,n}
	z^k \bar w^k
	-
	\sum_{k=0}^\infty b_k z^k \bar w^k
	\right|
	\leq
	\sum_{k=0}^{\infty} |b_{k,n} - b_k|
	|z|^k |\bar w|^k
	\leq
	\sum_{k=0}^{\infty} |b_{k,n} - b_k|
	R^{2k},
	\]
	where we have defined $b_{k,n} = 0$
	for $k \geq n$.
	Since $|b_{k,n} - b_k|R^{2k}$
	is bounded by
	$2B_k R^{2k}$ 
	we can use Lebesgue's dominated convergence
	theorem to conclude.
	
	\textbf{Step 5: Convergence of the point
		process and the minima}
	
	\noindent
	By \cite[Proposition 3.10]{ShiraiTakahashi}, 
	stated in Proposition
	\ref{prop:ConvergenceOfDPP}, the point process
	$\left\{ \frac{x_1}{n^{1/\alpha}} , \dots, \frac{x_n}{n^{1/\alpha}}  \right\}$
	converges
	to a determinantal point process $P$
	associated to the kernel $K$ defined in
	\eqref{eq:K}. By the continuity
	of the minimum (Lemma \ref{lemma:min})
	we obtain
	that 
	$\left\{ \frac{|x_1|}{n^{1/\alpha}} , \dots, 
	\frac{|x_n|}{n^{1/\alpha}}  \right\}$
	converges in law to
	the minimum of the norms of $P$.

	\textbf{Step 6: Analysis of the limit 
	of the minima}	
	
	\noindent 
	Let $\{Y_k\}_{k \geq 0}$ be a sequence
	of positive independent 
	random variables
	such that $Y_k$ follows the law
	\[\frac{	r^{2k+1} e^{-2\gamma r^{\alpha}}{\rm d}r}
	{\int_0^\infty s^{2k+1} 
		e^{-2\gamma s^{\alpha}}{\rm d}s}.\]
	If 
	$P$ is the determinantal point process
	associated to the kernel $K$
	then, by
	\cite[Theorem 4.7.1]{HoughKrisPeresVirag},
	the law of
	$\{|z| : z \in P\}$
	is the same as the law of
	the point process defined by
	$\{Y_k\}_{k \geq 0}$.
	So the infimum 
	has cumulative distribution function
	\begin{align*}
	\PP\left(\inf\{Y_k\} \leq y \right) 
	=
	1 - \PP\left(\inf\{Y_k\} > y \right) 
	&=
	1 - \prod_{k=0}^\infty
	\PP\left(Y_k > y \right) 
	\end{align*}
	But, by a change of variables we may see that
	\begin{align*}
	\int_y^\infty 
	r^{2k+1} e^{-2\gamma r^{\alpha}}{\rm d}r
	& =\frac{1}{\alpha(2\gamma)^{(2k+2)/\alpha}}
	\int_{2\gamma y^\alpha}^\infty
	\rho^{(2k+2)/\alpha - 1} e^{-\rho} {\rm d}\rho \\
	& =\frac{1}{\alpha(2\gamma)^{(2k+2)/\alpha}}
	\Gamma
	\left(\frac{2k+2}{\alpha},2\gamma y^\alpha
	\right)
	\end{align*}
	so that
	\[\PP\left(Y_k > y \right)
	=\frac{	\int_y^\infty 
		r^{2k+1} e^{-2\gamma r^{\alpha}}{\rm d}r}
	{\int_0^\infty s^{2k+1} 
		e^{-2\gamma s^{\alpha}}{\rm d}s}
	=
	\frac{\Gamma
		\left(\frac{2k+2}{\alpha},2\gamma y^\alpha
		\right)}
	{\Gamma
		\left(\frac{2k+2}{\alpha} \right)}\]
	from which we have that
	\begin{equation}
	\label{eq:FormulaInf}	
	\PP\left(\inf\{Y_k\} \leq y \right) 
	=
	1-
	\prod_{k=0}^\infty
	\frac{\Gamma
		\left(\frac{2k+2}{\alpha},2\gamma y^\alpha
		\right)}
	{\Gamma
		\left(\frac{2k+2}{\alpha} \right)}
	\end{equation}	
	
	{\bf Proof at infinity and convergence
	of the maxima.} 
	
To prove the result at infinity, we use the equivariance of the jellium under inversion
since, if $\nu$ satisfies 
	$\lim_{r \to \infty} r^{\alpha} 
	\nu(\hspace{0.5pt} \CC \setminus D_r) 	
	= \lambda$, then its pushforward by the inversion, $i_*\nu$, satisfies $\lim_{r \to 0} i_*\nu(D_r)/r^{\alpha} = \lambda$. To prove the result
	about the maxima, 
	we use the same equivariance under inversion
	together with the convergence of
	the minima from Step 5 and 
	the cumulative distribution function from
	\eqref{eq:FormulaInf}.
	\end{proof}

\subsection{Results on random polynomials}
We prove the results on random polynomials in the same order as we did for the jellium: We prove Theorem \ref{PolyThm} 
and Corollary
\ref{CorollairePoly} using the same strategy, and later 
we prove Theorem \ref{BulkMinMaxPoly}.
At the end of this section
we
give a short proof
of Theorem \ref{th:Weyl}.

First, we recall that if $(a_k)_{k \in \NN}$ are i.i.d. random variables satisfying 
\[\EE(\log(1+|a_0|))<+\infty,\] 
then the random power series $\sum_{k=0}^{\infty} a_k z^k$ has almost surely a radius of convergence equal to one. In fact, the following lemma immediately implies the general
statement in Corollary \ref{cor:ConvergenceRadius}
below.

\begin{lemma}[Arnold \cite{Arnold}] \label{Chap3 lemma}
	Let $(a_k)_{k\in \NN}$ be 
	a sequence of i.i.d. complex random variables. 
	Fix $\varepsilon > 0$. Then 
	\[\sup\limits_{k\in \NN} \frac{|a_k|}{e^{\varepsilon k}} < \infty \   \mathrm{  a.s.}   \Longleftrightarrow \EE( \log (1+|a_0|)) < \infty.  \]
\end{lemma}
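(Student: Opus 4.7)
The plan is to deduce both implications from the two Borel--Cantelli lemmas, with the bridge between the moment condition and the tail probabilities being the classical tail-sum equivalence: for any nonnegative random variable $Y$ and any $\delta > 0$,
\[
\EE(Y) < \infty \iff \sum_{k=0}^{\infty} \PP(Y > \delta k) < \infty.
\]
Applying this with $Y = \log(1+|a_0|)$ and $\delta = \varepsilon$ converts the hypothesis $\EE(\log(1+|a_0|)) < \infty$ into summability of $\PP(|a_k| > e^{\varepsilon k}-1)$, and similarly for the converse.

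For the easy direction ($\Leftarrow$), assuming $\EE(\log(1+|a_0|)) < \infty$, the events $\{|a_k| > e^{\varepsilon k}\} \subset \{\log(1+|a_k|) > \varepsilon k\}$ have summable probabilities by the equivalence above (the i.i.d.\ assumption is not needed here, only identical distribution). The first Borel--Cantelli lemma then yields that almost surely only finitely many $k$ satisfy $|a_k| > e^{\varepsilon k}$, and hence $\sup_k |a_k|/e^{\varepsilon k} < \infty$ a.s.

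For the harder direction ($\Rightarrow$) I would argue by contrapositive: assume $\EE(\log(1+|a_0|)) = \infty$ and show $\sup_k |a_k|/e^{\varepsilon k} = \infty$ a.s. Fix $M > 0$. Using the elementary bound $\log(1+Me^{\varepsilon k}) \leq \varepsilon k + \log(2M)$ for $k$ large, one has
\[
\PP(|a_0| > Me^{\varepsilon k}) \geq \PP(\log(1+|a_0|) > \varepsilon k + \log(2M)),
\]
and the sum of the right-hand side over $k$ diverges by the same tail-sum equivalence, since a constant shift does not affect the integrability of the positive part. Independence of the $a_k$'s together with the second Borel--Cantelli lemma then gives $|a_k| > Me^{\varepsilon k}$ for infinitely many $k$ almost surely, hence $\sup_k |a_k|/e^{\varepsilon k} \geq M$ a.s. The main subtlety lies exactly here: one has to obtain this inequality for \emph{every} $M$, not merely $M=1$, so that a countable intersection over $M \in \NN$ delivers $\sup_k |a_k|/e^{\varepsilon k} = \infty$ a.s. rather than only a positive lower bound. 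This is why the shift by $\log(2M)$ inside the logarithm is the critical quantitative step of the argument.
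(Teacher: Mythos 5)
Your proof is correct, and the easy direction $\Leftarrow$ coincides with the paper's. For the hard direction you argue differently: you take the contrapositive, fix a multiplicative constant $M$, show via the second Borel--Cantelli lemma that $|a_k| > M e^{\varepsilon k}$ infinitely often a.s., and intersect over $M \in \NN$ to force $\sup_k |a_k|/e^{\varepsilon k} = \infty$ a.s. The paper instead works directly: from $\sup_k |a_k|/e^{\varepsilon k} < \infty$ it deduces, for any $\theta > \varepsilon$, that $|a_k|/e^{\theta k} \to 0$ a.s., so that $(1+|a_k|)/e^{\theta k} \geq 1$ only finitely often a.s., and then invokes the same Borel--Cantelli/tail-sum equivalence with $\theta$ in place of $\varepsilon$ to conclude $\EE(\log(1+|a_0|)) < \infty$. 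The two tricks — shifting the exponential rate from $\varepsilon$ to $\theta$ versus shifting the multiplicative threshold from $1$ to $M$ — play the same role and rely on the same ingredients (tail-sum bound plus both Borel--Cantelli lemmas under independence). Your version makes the quantitative step over all $M$ explicit, at the cost of a contrapositive and a countable intersection; the paper's version avoids the contrapositive and reuses the same equivalence verbatim with a second parameter, which is marginally shorter.
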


\begin{proof}[Proof of the lemma]
	For every non negative random variable $X$ we have:
	\[ \sum\limits_{k=1}^{\infty} \PP\left(X\geq k\right) \leq \EE(X) \leq \sum\limits_{k=0}^{\infty} \PP(X\geq k) . \]
	Those inequalities come from the relation: $\EE(X)= \int_{\RR^+} \PP(X\geq x) \mathrm d x$. Now we apply this inequality to the non-negative random variable \[X= \frac{1}{\varepsilon} \log (1+|a_0|).\] 
	We deduce that 
	\[\sum\limits_{k=1}^{\infty} 
	\PP\left(\frac{1+|a_k|}{e^{\varepsilon k}}
	 \geq 1\right) < \infty
	 \Longleftrightarrow
	 \EE\left(\log (1+|a_0|) \right) < \infty.\]
Borel-Cantelli lemma 
 implies that 
\[\mbox{a.s. } \,
\frac{1+|a_k|}{e^{\varepsilon k}}
	 \geq 1 \mbox{ for a finite number of }
	 k \Longleftrightarrow 
	 \EE\left(\log (1+|a_0|) \right) < \infty.\]
In particular, if 	 
$\EE\left(\log (1+|a_0|) \right) < \infty$ then	 
$\varlimsup_{k \to \infty} \frac{|a_k|}{e^{\varepsilon k}} \leq 1,$ which implies $\sup\limits_{k\in \NN} \frac{|a_k|}{e^{\varepsilon k}} < \infty$.
Conversely, if $\sup\limits_{k\in \NN} \frac{|a_k|}{e^{\varepsilon k}} <\infty$ 
then, for $\theta >\varepsilon$
we have
$\lim_{k \to \infty}
\frac{|a_k|}{e^{\theta k}} = 0 $
almost surely so that
$\frac{1+|a_k|}{e^{\theta k}}
	 \geq 1$ for a finite number of $k$
	 which implies that
	 $\EE\left(\log (1+|a_0|) \right) < \infty$.
\end{proof}

\begin{corollary}[Radius of convergence
of a random power series]
\label{cor:ConvergenceRadius}
Let $(\eta_k)_{k \in \mathbb N}$
be a (deterministic) sequence
of complex numbers.
Suppose that $(a_k)_{k \in \mathbb N}$ 
is a sequence of i.i.d. complex
random variables such that
\[\EE( \log (1+|a_0|)) < \infty.\]
If $a_0$ is not zero (i.e. the law
of $a_0$ is not the Dirac delta at $0$)
then the radius of convergence
of the random power series 
$\sum_{k=0}^{\infty} \eta_k a_k z^k$ 
is almost surely equal to
the radius of convergence of the deterministic
power series
$\sum_{k=0}^{\infty} \eta_k z^k$. 
\begin{proof}

Lemma \ref{Chap3 lemma} implies that
\[\varlimsup_{k \to \infty} {|a_k|^{1/k}}
\leq 1 \, \text{ a.s.}
\Longleftrightarrow
\EE( \log (1+|a_0|)) < \infty.
\]
Since
\[\varlimsup_{k \to \infty} {|\eta_k|^{1/k}}
\varlimsup_{k \to \infty} {|a_k|^{1/k}}
\geq
\varlimsup_{k \to \infty} {|\eta_k a_k|^{1/k}}
\]
we obtain that the radius of convergence of 
$\sum_{k=0}^{\infty} \eta_k a_k z^k$ 
is greater or equal than
the radius of convergence of
$\sum_{k=0}^{\infty} \eta_k z^k$.
On 
the other hand,
if $\eta_k z^k$ is not bounded,
then
$\eta_k a_k z^k$ is not bounded. This is a 
consequence of the second
Borel-Cantelli lemma since
we can find $\varepsilon>0$ such that
\[\mathbb P(|a_0|> \varepsilon) > 0\]
which implies that, almost surely,
\[|a_k|> \varepsilon
\text{ for infinitely many } k.\]
This concludes the proof.

\end{proof}

\end{corollary}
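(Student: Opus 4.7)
The plan is to compute both radii of convergence via the Cauchy--Hadamard formula
\[
R\bigl(\textstyle\sum c_k z^k\bigr)^{-1} = \varlimsup_{k\to\infty} |c_k|^{1/k},
\]
and show the almost sure equality
\[
\varlimsup_{k\to\infty} |\eta_k a_k|^{1/k} = \varlimsup_{k\to\infty} |\eta_k|^{1/k}.
\]
This reduces everything to proving that $\varlimsup_k |a_k|^{1/k} = 1$ almost surely and then combining this with the deterministic sequence $(\eta_k)$ in a way that handles both inequalities.

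For the upper bound, I would apply Lemma \ref{Chap3 lemma}: for every $\varepsilon>0$ the hypothesis $\EE(\log(1+|a_0|))<\infty$ gives an almost surely finite random constant $M_\varepsilon$ with $|a_k|\leq M_\varepsilon e^{\varepsilon k}$ for all $k$, hence $\varlimsup_k |a_k|^{1/k}\leq e^\varepsilon$; sending $\varepsilon\to 0$ yields $\varlimsup_k |a_k|^{1/k}\leq 1$ a.s. Therefore $|\eta_k a_k|^{1/k}\leq (1+o(1))|\eta_k|^{1/k}$, which gives $\varlimsup_k |\eta_k a_k|^{1/k}\leq \varlimsup_k |\eta_k|^{1/k}$ almost surely.

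For the reverse inequality, set $L=\varlimsup_k |\eta_k|^{1/k}$ and pick a deterministic subsequence $(k_j)$ with $|\eta_{k_j}|^{1/k_j}\to L$. Since $a_0$ is not identically zero, there exists $\varepsilon>0$ with $p:=\PP(|a_0|>\varepsilon)>0$. The events $\{|a_{k_j}|>\varepsilon\}$ are independent with $\sum_j p=\infty$, so by the second Borel--Cantelli lemma infinitely many of them occur almost surely. Along a further sub-subsequence we therefore have
\[
|\eta_{k_j} a_{k_j}|^{1/k_j}\geq \varepsilon^{1/k_j}|\eta_{k_j}|^{1/k_j}\longrightarrow L,
\]
so $\varlimsup_k |\eta_k a_k|^{1/k}\geq L$.

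The only mildly delicate point is the lower bound, where one might at first worry that the subsequence realising the $\limsup$ of $|\eta_k|^{1/k}$ is disjoint from the indices where $|a_k|$ is non-negligible; independence of the coefficients and the second Borel--Cantelli lemma resolve this cleanly. Everything else is routine Cauchy--Hadamard bookkeeping.
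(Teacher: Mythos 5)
Your proposal is correct and takes essentially the same route as the paper: both prove $\varlimsup_k |a_k|^{1/k}\leq 1$ a.s.\ via Lemma~\ref{Chap3 lemma} and obtain the upper bound on $\varlimsup_k|\eta_k a_k|^{1/k}$ from the sub-multiplicativity of $\varlimsup$, then invoke the second Borel--Cantelli lemma for the reverse direction. The one place where you are slightly more careful than the paper's write-up is the lower bound: you first fix a deterministic subsequence $(k_j)$ realising $\varlimsup_k|\eta_k|^{1/k}$ and then apply Borel--Cantelli along that subsequence, whereas the paper simply asserts that $|a_k|>\varepsilon$ infinitely often and concludes unboundedness of $\eta_k a_k z^k$ from unboundedness of $\eta_k z^k$; your version makes explicit why the two sets of indices can be made to intersect.
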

\subsubsection{Proof of Theorem \ref{PolyThm}}
\begin{proof}[Proof of Theorem \ref{PolyThm}]
	{\bf Premilinaries.}
	Thanks to the equivariance of the roots of random polynomials
	given
	in Theorem
	\ref{PolyEquivariance}, 
	we can assume that $U$, the connected component of $\CC \setminus \text{supp }\nu$, 
	is the open unit disk. This condition can be written as $S^1 \subset \text{ supp }\nu \subset \CC\setminus \mathbb{D}$.
	
	{\bf Proof of 1.} 
Suppose that $\nu$ is a probability measure satisfying Assumption \ref{assumption1} and
\[S^1 \subset \text{supp } \nu \subset 
\CC \setminus	\mathbb{D}.\] 
Recall that the random polynomials $P_n$ are
	\[ P_n(z) = \sum_{k=0}^{n}
	\frac{a_k}{\sqrt{\langle X^k, X^k \rangle_{n,\nu}}} z^k,  \]
	where
	\[
	\langle X^k, X^k \rangle_{n,\nu}
	=\int_{\mathbb C}
	|z|^{2k}
	e^{-2n V^\nu(z)} 
	\mathrm d \nu(z).\]
	We define
	\[ c_n= \int_\mathbb{C} e^{-2nV^{\nu}(z)}{\rm d}\nu(z), \]
	and we would like
	to prove that,
	almost surely, $(
	\sqrt {c_n}P_n)_{n \in \NN}$ converges uniformly 
	on compact sets of $\mathbb D$
	towards $z \mapsto \sum a_k z^k$. 
	Afterwards, we conclude
	by Lemma \ref{lemma:hurwitz}. Let $\nu_n$ be the probability measure
	defined by
	\[ {\rm d}\nu_n = \frac{1}{c_n} e^{-2nV^{\nu}} {\rm d}\nu.  \]
	We begin by proving that
	\[\nu_n
	\xrightarrow[n \to \infty]{
	\mathrm{weakly}} \nu_{S^1}   \] 
	where $\nu_{S^1}$ denotes
	the uniform measure on the unit circle.
	This will be a consequence of
	Laplace's method, some simple
	tightness property, and the invariance
	under rotations of the measures $\nu_n$. 
	Then, we expect
	that
	$\langle X^k,X^k\rangle_{n,\nu}
	/c_n
	= \int_\mathbb{C} |z|^{2k} 
	{\rm d}\nu_n(z)$
	converges to
	$\int_\mathbb{C} |z|^{2k} {\rm d}\nu_{S^1}(z)$. This may
	be a consequence of the weak convergence
	if $|z|^{2k}$ were bounded. By Laplace's method,
	we prove that
	the integral of $|z|^{2k}$ 
	outside a large open disk 
	converges to zero and then we may consider
	$|z|^{2k}$ as bounded.
	
	\textbf{Step 1: Convergence of the measures}
	
	\noindent In this step we prove that
	\[\nu_n
	\xrightarrow[n \to \infty]{
	\mathrm{weakly}} \nu_{S^1} .  \]
	First, let us show that 
	\[ \lim_{n\to \infty} 
	\frac{1}{n} \log c_n  = 0. \]
	This may be seen as an application 
	of Laplace's method but
	we write the proof for the reader's convenience.
	Since $V^{\nu}$ is non-negative, 
	$c_n \leq 1$, which implies
	\[ \varlimsup_{n\to \infty} 
	\frac{1}{n} \log c_n \leq 0.  \]
	Let $\varepsilon >0$ fixed. Then, since
	$V^{\nu}$ is continuous and equals $0$ on the unit circle, there exists $\delta >0$ such that $V^{\nu}(z) \leq \varepsilon$
	for all $z \in D_{1+\delta}$. 
	This implies that
	\[ c_n = \int_\mathbb{C} e^{-2n V^{\nu}(z)}{\rm d}\nu(z) \geq \int_{D_{1+\delta}} 
	e^{-2nV^{\nu}(z)}{\rm d}\nu(z) \geq e^{-2n\varepsilon} \nu(D_{1+\delta}). \]
	Taking the logarithm and the lower limit  we get,
	since $\nu(D_{1+\delta})>0$,
	\[ \varliminf_{n \to \infty} 
	\frac{1}{n} \log c_n \geq -2\varepsilon .\]
	Since this can be done for every $\varepsilon>0$,
	 we
	obtain 
	\[ \lim_{n \to \infty} \frac{1}{n} \log c_n  = 0. \]
	This behavior along with the fact that for any closed $A \subset \CC \setminus  \bar{\mathbb D}$ 
	\[ \frac{1}{n} \log \int_A e^{-2nV^{\nu}(z)} {\rm d}\nu(z)\leq - 2\inf_{z \in A} V^{\nu}(z) <0 \]
	imply that for any $r>1$
	\begin{equation}
	\label{eq: nun outside of br}
	\nu_n(\hspace{0.5pt} \CC \setminus D_r)
	 \xrightarrow[n \to \infty]{} 0. 
	\end{equation}
	This last fact also implies that the
	sequence is tight.
	Since every $\nu_n$ is invariant
	under rotations, every
	limit point of the sequence 
	$(\nu_n)_{n \in \NN}$ is 
	also invariant under rotations.
	The fact that
	$\nu_n(\mathbb D) = 0$
	and \eqref{eq: nun outside of br}
	imply that the only
	possible limit point is $ \nu_{S^1}$
	so that
	\[\nu_n
	\xrightarrow[n \to \infty]{
	\mathrm{weakly}} \nu_{S^1} .  \] 
	
	\textbf{Step 2: Convergence of the integrals}
	
	\noindent Now let us prove that for any fixed 
	non-negative integer $k$ we have
	\[ \frac{\langle X^k, X^k
	\rangle_{n,\nu}}{c_n} = 
	\int_\mathbb{C} |z|^{2k} {\rm d}\nu_n(z) 
	\xrightarrow[n \to \infty]{} 
	\int_\mathbb{C} |z|^{2k} {\rm d}\nu_{S^1}(z) = 1.  \]
	For $A>1$ we write
	\begin{align*}
	\int_\mathbb{C} |z|^{2k} {\rm d}\nu_n(z) 
	= \int_{D_A} |z|^{2k} {\rm d}\nu_n(z) 
	+ 
	\int_{\CC \setminus D_A} |z|^{2k} {\rm d}\nu_n(z).
	\end{align*}
	First, we notice that the convergence of $\nu_n$ towards $\nu_{S^1}$ implies that 
	\[ 
	\int_{D_A} |z|^{2k} {\rm d}\nu_n(z) \xrightarrow[n \to \infty]{}
	\int_{D_A} |z|^{2k} 
	{\rm d}\nu_{S^1}(z) = 1. \]
	That the integral on $\CC \setminus D_A$ is
	negligible follows from
	\[
	\lim_{n\to \infty}
	 \frac{1}{n}\log \int_{\CC \setminus D_A} |z|^{2k} e^{-2nV^{\nu}(z)}{\rm d}\nu(z)
	=
	-\inf_{z \in \CC \setminus D_A} V^\nu(z) \]	
	which is an application of Laplace's method.
	For convenience of the reader
	we will proceed in a somewhat more explicit way.
	Since $V^{\nu}(z) \sim_{|z| \to \infty} \log|z|$, 
	we may have chosen $A>1$ 
	such that for $|z|>A$ we have
	$V(z) \geq 1/2 \log |z|$.
	We obtain
	\begin{align*}
	\int_{\CC \setminus D_A} |z|^{2k} e^{-2nV^{\nu}(z)}{\rm d}\nu(z) & \leq  \int_{\CC \setminus D_A} |z|^{2k} e^{-n\log|z|}{\rm d}\nu(z) \\  
	&  = \int_{\CC \setminus D_A} |z|^{2k - n} {\rm d}\nu(z)\\
	&  \leq A^{2k-n}
	\end{align*}     
	if $n \geq 2k$.
	This entails that 
	\[ \varlimsup_n \frac{1}{n} \log  
	\int_{\CC \setminus D_A} |z|^{2k} e^{-2nV^{\nu}(z)}{\rm d}\nu(z) \leq -A  \]
	which, using the behavior of $c_n$,
	implies that 
	\[  \frac{1}{c_n}\int_{\CC \setminus D_A} |z|^{2k} e^{-2nV^{\nu}(z)}{\rm d}\nu(z) \xrightarrow[n \to \infty]{} 0.\]
	Hence, we obtained that for any fixed $k$,
	\[ \frac
	{\langle X^k,X^k\rangle_{n,\nu}}
	{c_n} 
	\xrightarrow[n \to \infty]{}
	1.  \]
	
	\textbf{Step 3: Uniform convergence
		of the polynomials}
	
	\noindent Let $\rho \in (0,1)$, then for any $z \in D_{\rho}$ we have
	\[\left| \sqrt{c_n} P_n(z) - \sum_{k=0}^{n} a_k z^k \right|\leq \sum_{k=0}^{n} \left|  
\sqrt{	
\frac{c_n}{\langle X^k ,
	X^k \rangle_{n,\nu}}}-1  \right| |a_k| \rho^k. \]
	This implies that, almost surely, 
	$(\sqrt{c_n}P_n)_{n \in \NN}$
	converges
	uniformly on
	$D_\rho$ towards \linebreak $z \mapsto \sum_{k=0}^{\infty} a_k z^k$ if we notice that 
	\[ \frac{c_n}
	{\langle X^k, X^k \rangle_n} 
	\leq 1 \] which allows us to use 
	Lebesgue's dominated convergence theorem. Since this happens
	for every $\rho \in (0,1)$ we have
	obtained that, almost surely,
	$(\sqrt{c_n}P_n)_{n \in \NN}$
	converges
	uniformly on
	compact sets of $\mathbb D$ towards  
	$z \mapsto \sum_{k=0}^{\infty} a_k z^k$.

	\textbf{Step 4: Convergence of the point process}
	
	\noindent To complete the proof, we use Hurwitz's Theorem, detailed in Lemma
	\ref{lemma:hurwitz}, which gives
	the almost sure convergence of the point process.
\end{proof}

\begin{proof}[Proof of the Corollary \ref{CorollairePoly}]
The proof is the same as the one of Corollary \ref{CorollaireJellium}. We use the inversion equivariance and the continuity of the minimum to obtain the convergence in law of 
$\min_{k \in \{1,\dots, n\}}{1/|z_k|}$ and we deduce the corollary by inverting again and scaling.
\end{proof}

\subsubsection{Proof of Theorem \ref{BulkMinMaxPoly}}

\begin{proof}
	\textbf{Proof at the origin.}
	Let us define 
	the positive measure $\mu_{\alpha}$ by
		\[ {\rm d}\mu_\alpha (z)
	= \frac{\alpha}{2\pi} 
	|z|^{\alpha-2} {\rm d}\ell_{\mathbb{C}}(z)\]
	which is the only measure $\mu$ satisfying
	that for every $r>0$, $\mu(D_r) = r^{\alpha}$.
	A change of variables
	 shows
	that
\begin{equation}\label{Mittag}
\int_\mathbb{C} |z|^{2k} e^{-\frac{\lambda}{\alpha}|z|^{\alpha}}\lambda {\rm d}\mu_{\alpha}(z)
= 
\alpha
\left( \frac{\alpha}{\lambda}\right)^{2k/\alpha} 
\Gamma \left(1+\frac{2k}{\alpha}\right).
\end{equation}
	Using Lemma \ref{lemma:hurwitz}
	and Lemma \ref{lemma:min}, it is
	enough to prove that, almost surely,
	\[\frac{1}{\sqrt{n}}P_n \left(\frac{z}{n^{1/\alpha}}
	\right) = \sum_{k=0}^n a_k \frac{1}{\sqrt{n}}\frac{1}{n^{k/\alpha}
	\sqrt{\langle X^k , X^k \rangle_{n,\nu}}}z^k \]
	converges uniformly on compact sets of $\mathbb{C}$ towards	
	\[ f_{\alpha,\lambda}(z)= \sum_{k=0}^{\infty} \frac{a_k}{\displaystyle \left( \int_\mathbb{C} |z|^{2k} e^{-\frac{\lambda}{\alpha}|z|^{\alpha}}\lambda {\rm d}\mu_{\alpha}(z) \right)^{1/2} }z^k .\]
	We start by recalling some properties of the potential $V^{\nu}$. Indeed,
	for the convergence to hold, 
	we assume
	$V^{\nu}(0)=0$ which can be
	done by adding a constant. 
	Then we will prove that, for any $k$,
	\begin{equation}
	\label{eq:coeffPoly}
	n^{1+2k/\alpha}\langle X^k,X^k \rangle_{n,\nu} =
	 n^{2k/\alpha}\int_\mathbb{C} |z|^{2k} e^{-2nV^{\nu}(z)}n\, {\rm d}\nu(z) \xrightarrow[n \to \infty]{} \int_\mathbb{C} |z|^{2k} e^{-\frac{\lambda}{\alpha}|z|^{\alpha}}\lambda {\rm d}\mu_{\alpha}(z). 
	\end{equation}
	The idea is quite simple.
	If $T_n(z) = n^{1/\alpha} z$ then
	\[n^{2k/\alpha}\int_\mathbb{C} |z|^{2k} e^{-2nV^{\nu}(z)}n\, {\rm d}\nu(z) = 
	\int_\mathbb{C} |z|^{2k} 
	e^{-2nV^{\nu}\left( \frac{z}{n^{1/\alpha}} \right)}
	n\, \mathrm d T_n(\nu)(z)\]
	where $T_n(\nu)$ denotes the pushforward
	 measure
	of $\nu$ by $T_n$.
	By the hypothesis 
	\eqref{eq:HypZero},	
	we should have that
	$nT_n(\nu)$ converges towards $\lambda \mu_\alpha$
	and $nV^{\nu}\left( \frac{z}{n^{1/\alpha}} \right)$
	converges towards $(\lambda/\alpha)|z|^\alpha$
	in some sense what would imply \eqref{eq:coeffPoly}. 
	Finally, we find a sequence $B_k$ such that for any $n \in \NN$,
	\[ \frac{1}{n^{\frac{1}{2}+
	\frac{k}{\alpha}}
	\sqrt{\langle X^k, X^k\rangle_{n,\nu}}} \leq B_k   \]
	with $\sum_{k=0}^{\infty} a_k B_k z^k$ having,
	almost surely, 
	an infinite radius of convergence
	which,
	by
	Corollary
	\ref{cor:ConvergenceRadius},
	 happens if and only if
	$\sum_{k=0}^{\infty} B_k z^k$
	has an infinite radius
	of convergence.
	
	\textbf{Step 1: Properties of the potential} 
	
	\noindent Let $\nu$ be a rotationally invariant probability measure and suppose
that there exists $\alpha >0$ and 
	$\lambda >0$ such that
	\[ \lim_{r \to 0} \frac{\nu(D_r)}{r^{\alpha}} = \lambda.  \]
	We will assume that $V^{\nu}(0)=0$, since adding a constant to the potential $V^{\nu}$ only changes the polynomials $R_{k,n}$ 
	by a multiplicative constant
	(depending only on $n$) 
	which has no impact on the zeros of $P_n$.
	So, using 
	Lemma \ref{Formula potential}, we can write
	\[ V^{\nu}(z) = \int_{0}^{|z|} \frac{\nu(D_r)}{r} {\rm d}r. \]
	We obtain that 
	\[ \frac{ V^{\nu}(r)}{r^{\alpha}}
	\xrightarrow[r \to 0]{} \frac{\lambda}{\alpha}
	=:\gamma  \]
	and that $V^{\nu}(r) > 0$ for every
	$r > 0$.
	We may also obtain a useful lower bound
	for $V^\nu$. If $\delta \in (0,1)$ 
	and if $|z|\geq \delta$ we can write
	\begin{align}
	\label{eq:VgeqNuLog}
	V^{\nu}(z) & = \int_0^{\delta} \frac{\nu(D_r)}{r}
	{\rm d}r +\int_\delta^{|z|} 
	 \frac{\nu(D_r)}{r}{\rm d}r  \nonumber \\
	& \geq \nu(D_{\delta}) \log |z| - \nu(D_{\delta}) \log \delta \nonumber \\
	& \geq  \nu(D_{\delta}) \log |z|
	\end{align}
	where we have used that $\log \delta < 0$.

	\textbf{Step 2: Convergence of the coefficients}
	
	\noindent Let us define 
	$T_n: z \mapsto n^{1/\alpha} z$. Then for any $r>0$ we have
	\[ n T_n(\nu)(D_r) = n \nu(D_{r/n^{1/\alpha}}) \xrightarrow[n \to \infty]{} \lambda r^{\alpha}=
	\lambda \mu_{\alpha}(D_r) .  \]
	In particular, $nT_n(\nu)(D_K)$ converges towards $\lambda \mu_{\alpha}(D_K)$ and the cumulative distribution function of 
	$nT_n(\nu)/nT_n(\nu)(D_K)$, which
	is a probability measure on $D_K$, converges pointwise towards the cumulative distribution function of $\lambda \mu_{\alpha}/ \lambda \mu_{\alpha}(D_K)$.
	This implies that, for any $K > 0$ and any
	bounded continuous function $g$
	on $D_K$ we have
	\[ \int_{D_K} g \, n {\rm d}T_n(\nu) \xrightarrow[n \to \infty]{} \lambda \int_{D_K} g {\rm d}\mu_{\alpha} .
	\]
	Let $\varepsilon \in (0,1)$. There exists 
	$\delta \in (0,1)$ 
	such that for any $z \in D_{\delta}$ we have
	\[ (1-\varepsilon) \gamma |z|^{\alpha } \leq V(z)  \leq (1+\varepsilon) \gamma |z|^{\alpha } \]
	and for any $r \in (0,2\delta)$ we have
	\begin{equation}
	\label{eq:nuDlambda}
	\nu(D_r) \leq 2 \lambda r^{\alpha} . 
	\end{equation}	  
	Let us decompose
	the integral 
	that interests us,
	the left-hand side of \eqref{eq:coeffPoly},
	as the sum 
	\begin{align*}
	\int_\mathbb{C} 
	\! n^{2k/\alpha} |z|^{2k} e^{-2nV^{\nu}(z)}n{\rm d}\nu(z)  = & \int_{D_{\delta}}\! n^{2k/\alpha} |z|^{2k} e^{-2nV^{\nu}(z)}n{\rm d}\nu(z) \\ 
	& + \int_{\CC \setminus D_{\delta}}\! n^{2k/\alpha} |z|^{2k} e^{-2nV^{\nu}(z)}n{\rm d}\nu(z).
	\end{align*}
	From the lower bound 	\eqref{eq:VgeqNuLog}
	found 
	in Step 1, we know that 
	if we denote $\kappa = \nu(D_\delta)$ then
	$V^{\nu}(z) \geq  \kappa\log|z|$ 
	for $|z| \geq \delta$. 
	This implies that 
	\[ 0 \leq \int_{\CC \setminus D_{\delta}}\! n^{2k/\alpha} |z|^{2k} e^{-2nV^{\nu}(z)}n{\rm d}\nu(z) \leq   n^{2k/\alpha+1} \delta^{2k-2\kappa n}   \]
	when $n \geq k/\kappa$, since in that case
	$|z|^{2k-2\kappa n} \leq \delta^{2k-2\kappa n} $
	for $|z| \leq \delta$, and then		
	\[\int_{\CC \setminus D_{\delta}}\! n^{2k/\alpha} |z|^{2k} e^{-2nV^{\nu}(z)}n{\rm d}\nu(z)
	\xrightarrow[n \to \infty]{} 0.\]
	To study the first term, 
	the integral over $D_\delta$,
	we start by noticing that 
	\begin{equation}
	\label{eq:IntegralInequality1}
	 \int_{D_{\delta}}\! n^{2k/\alpha} |z|^{2k} e^{-2n\gamma(1+\varepsilon)|z|^\alpha}n{\rm d}\nu(z)\leq  \int_{D_{\delta}}\! n^{2k/\alpha} |z|^{2k} e^{-2nV^{\nu}(z)}n{\rm d}\nu(z) 
	 \end{equation}
	and 
	\begin{equation}
	\label{eq:IntegralInequality2}	
	  \int_{D_{\delta}}\! n^{2k/\alpha} |z|^{2k} e^{-2nV^{\nu}(z)}n{\rm d}\nu(z) \leq  \int_{D_{\delta}}\! n^{2k/\alpha} |z|^{2k} e^{-2n\gamma(1-\varepsilon)|z|^\alpha}n{\rm d}\nu(z). 
	 \end{equation}  
	If we prove that for any $\gamma' >0$, we have
	\begin{equation}
	\label{eq:IntegralOverD}
	\int_{D_{\delta}}\! n^{2k/\alpha} |z|^{2k} e^{-2n\gamma'|z|^\alpha}n{\rm d}\nu(z) \xrightarrow[n \to \infty]{} \int_{\mathbb{C}}\! |z|^{2k} e^{-\gamma'|z|^\alpha}\lambda {\rm d}\mu_{\alpha}(z)
	\end{equation} 
	then,
	using also
	\eqref{eq:IntegralInequality1}
	and 
	\eqref{eq:IntegralInequality2}, 
	we will obtain that for any $\varepsilon >0$
	\[ \varlimsup_{n} 
	\int_{D_{\delta}}\! n^{2k/\alpha} |z|^{2k} 
	e^{-2nV^\nu(z)}n{\rm d}\nu(z) \leq  \int_{\mathbb{C}}\! |z|^{2k} e^{-\gamma(1-\varepsilon)|z|^\alpha}\lambda {\rm d}\mu_{\alpha}(z)\]
	and
	\[\varliminf_{n} \int_{D_{\delta}}\! n^{2k/\alpha} |z|^{2k} e^{-2nV^\nu(z)}n{\rm d}\nu(z) \geq  \int_{\mathbb{C}}\! |z|^{2k} e^{-\gamma(1+\varepsilon)|z|^\alpha}\lambda {\rm d}\mu_{\alpha}(z).\]
	Taking the limit as $\varepsilon$ goes to zero
	will complete the proof.
	
	To prove \eqref{eq:IntegralOverD} let us write 
	\begin{align*} \int_{D_{\delta}}\! n^{2k/\alpha} |z|^{2k} e^{-2n\gamma'|z|^\alpha}n{\rm d}\nu(z) & = \int_{D_{\delta n^{1/\alpha}}}\! |x|^{2k} e^{-2n\gamma'|x/n^{1/\alpha}|^\alpha}n
	{\rm d}T_n(\nu)(x) \\
	&  = \int_{D_{\delta n^{1/\alpha}}}\! |x|^{2k} e^{-2\gamma'|x|^\alpha}n
	{\rm d}T_n(\nu)(x).
	\end{align*}
	For any fixed integer $K>0$,
	the weak convergence of 
	the measures $(nT_n(\nu))_{n \in \NN^+}$ towards 
	$\lambda \mu_{\alpha}$ implies that 
	\begin{equation}\label{goal}
	\int_{D_{K}}\! |x|^{2k} e^{-2\gamma'|x|^\alpha}n{\rm d}T_n(\nu)(x) \xrightarrow[n \to \infty]{}  \int_{D_{K}}\! |x|^{2k} e^{-2\gamma'|x|^\alpha}\lambda {\rm d}\mu_{\alpha}(\nu)(x).
	\end{equation}
	If we are able to find a function $h(K)$ going to zero as $K$ goes to infinity for which
	\[ 0 \leq \int_{D_{\delta n^{1/\alpha}}\setminus D_K}\! |x|^{2k} e^{-2\gamma'|x|^\alpha}n
	{\rm d}T_n(\nu)(x) \leq h(K)  \]
	for $n$ large enough then \eqref{eq:IntegralOverD} would be established. To this aim, we write
	\begin{align*}
	\int_{D_{\delta n^{1/\alpha}}\setminus D_K}\! \! \! |x|^{2k} e^{-2\gamma'|x|^\alpha}n
	{\rm d}T_n(\nu)(x) & \leq \! \sum_{j=K}^{ 
		\lfloor \delta n^{1/\alpha} \rfloor} 
	\int_{D_{j+1} \setminus D_j} \! \! |x|^{2k} e^{-2\gamma'|x|^\alpha}n {\rm d}T_n(\nu)(x) \\ 
	& \leq \sum_{j=K}^{ 
		\lfloor \delta n^{1/\alpha} \rfloor} 
	(j+1)^{2k} e^{-2\gamma'j^\alpha} nT_n(\nu)
	(D_{j+1}) \\
	& \leq \sum_{j=K}^{
		\lfloor \delta n^{1/\alpha} \rfloor} 
	(j+1)^{2k} e^{-2\gamma'j^\alpha} 
	n\nu(D_{(j+1)/{n^{1/\alpha}}}) \\
	& \leq  \sum_{j=K}^{ 
		\lfloor \delta n^{1/\alpha} \rfloor} 
	(j+1)^{2k} e^{-2\gamma'j^\alpha} \lambda 2
	(j+1)^{\alpha} \\
	& \leq \sum_{j=K}^{\infty} (j+1)^{2k} e^{-2\gamma'j^\alpha} \lambda 2(j+1)^{\alpha} = h(K)
	\end{align*}
	where we have used that
	$\lfloor \delta n^{1/\alpha} \rfloor + 1
	\leq 2\delta n^{1/\alpha}$
	for $n$ large enough to apply
	\eqref{eq:nuDlambda}.
	This ends the proof of this step.
	
	\textbf{Step 3: Dominated convergence}
	
	\noindent To obtain the uniform convergence of 
	$P_n(./n^{1/\alpha})/\sqrt n $ towards $f_{\alpha,\lambda}$, it suffices to find a sequence 
	$(B_k)_{k \in \NN}$, 
	independent of $n$, such that for any $k$ and $n$
	with $k \leq n$,
	\begin{equation}
	\label{eq:Bk2} {B_k}^{-2} \leq
	n^{1+\frac{k}{\alpha}}
	\langle X^k,X^k\rangle_{n,\nu}
	\end{equation}
	and such that the power series $\sum_{k=0}^{\infty} 
	B_k z^k$ have an infinite radius of convergence.
	
	Let $\varepsilon >0$ such that for any $r \in [0,\varepsilon]$
	\begin{equation}\label{inequality measures}
	\frac{3\lambda}{4} r^\alpha \leq \nu(D_r) \leq \frac{5\lambda}{4} r^\alpha
	\end{equation}
	and for any $z \in D_{\varepsilon}$
	\[  \frac{\gamma}{2} |z|^\alpha \leq V^{\nu}(z) \leq \frac{3\gamma}{2} |z|^{\alpha}.  \]   
	Such an $\varepsilon$ exists 
	since
	$\lim_{r\to 0} \nu(D_r)/\mu_\alpha(D_r)= 
	\lambda $ and
	$ \lim_{r \to 0} r^{-\alpha} V^{\nu}(r)
	=\gamma  $.
	If $k \leq n$, \linebreak we have
	\begin{align*}
	\int_\mathbb{C} n^{2k/\alpha} |z|^{2k} e^{-2nV^{\nu}(z)} n {\rm d}\nu(z) & = \int_{\CC} 
	|x|^{2k} e^{-2n V^\nu
	\left( \frac{x}{n^{1/\alpha}}
	 \right)} 
	n \, {\rm d}  T_n(\nu)(x) \\
	& \geq \int_{D_{\varepsilon n^{1/\alpha}} } |x|^{2k} e^{-3\gamma |x|^\alpha} n\,{\rm d}T_n(\nu)(x)  \\
	& \geq 
	\int_{D_{\varepsilon k^{1/\alpha}} \setminus  D_{\varepsilon (k/2)^{1/\alpha}}} |x|^{2k} e^{-3\gamma |x|^\alpha} n\, {\rm d}T_n(\nu)(x) \\
	\geq \left( \frac{\varepsilon}{2} \left( \frac{k}{2} \right)^{1/\alpha} \right)^{2k} &e^{-3\gamma \varepsilon^{\alpha} k } \left( n T_n(\nu) 
	\left(D_{\varepsilon k^{1/\alpha}
	}\right) -  n T_n(\nu)
	\left(
	D_{\varepsilon 
	(k/2)^{1/\alpha}}\right) \right).
	\end{align*}
	Due to the inequality \eqref{inequality measures}, we deduce that, for $k \geq n$,
	\begin{align*}
	n T_n(\nu) \left(D_{\varepsilon k^{1/\alpha}}\right) -  
	n T_n(\nu)(D_{\varepsilon (k/2)^{1/\alpha}}) & = 
	n\nu\left(D_{\varepsilon k^{1/\alpha} n^{-1/\alpha}}\right) - n\nu \left(D_{\varepsilon (k/2)^{1/\alpha}
	 n^{-1/\alpha}} \right) \\
	& \geq \frac{3 \lambda}{4}  \varepsilon^\alpha k - \frac{5
	\lambda }{4} 
\varepsilon^\alpha 
	\frac{k}{2} \\
	& \geq k \frac{
	\lambda  \varepsilon^\alpha}{8}.
	\end{align*}
If we define $B_k$ such that
	\[ B_k^{-2} =\left( \frac{\varepsilon}{2} 
	\left(\frac{k}{2}\right)
	^{1/\alpha} \right)^{2k} e^{-3\gamma \varepsilon^{\alpha} k }  k \frac{\lambda  \varepsilon^\alpha}
	{8}, \]
then we have
\[\int_\mathbb{C} n^{2k/\alpha} |z|^{2k} e^{-2nV^{\nu}(z)} n {\rm d}\nu(z) 
	\geq B_k^{-2}.\]
	which is
	\eqref{eq:Bk2}.
Since
	\[ \frac{1}{k} \log B_k \xrightarrow[k \to \infty]{} -\infty
	 , \]
	we have that $\sum_{k=0}^{\infty} 
	B_k z^k$ has an infinite radius of convergence.
	Let $\rho > 0$.
	For every $z \in D_\rho$ we have
\begin{align*} \sum_{k=0}^{\infty} &
\left(\frac{1}{\displaystyle
n^{\frac{1}{2}+\frac{k}{\alpha}}
\sqrt {\langle X^k,X^k
 \rangle_{n,\nu}}}
- \frac{1}{\displaystyle \left( \int_\mathbb{C} |z|^{2k} e^{-\frac{\lambda}{\alpha}|z|^{\alpha}}			\lambda {\rm d}\mu_{\alpha}(z) \right)^{1/2} } 
\right) a_k z^k   				\\
& \quad  \quad  \quad \quad \leq  
\sum_{k=0}^{\infty}
\left|\frac{1}{\displaystyle
n^{\frac{1}{2}+\frac{k}{\alpha}}
\sqrt {\langle X^k,X^k
 \rangle_{n,\nu}}}
- \frac{1}{\displaystyle \left( \int_\mathbb{C} |z|^{2k} e^{-\frac{\lambda}{\alpha}|z|^{\alpha}}			\lambda {\rm d}\mu_{\alpha}(z) \right)^{1/2} } 
\right||a_k| \rho^k   .
\end{align*}
The $k$-th term 
of the right-hand side
is dominated by
$2B_k |a_k| \rho^k$ so that, by
using Lebesgue's dominated convergence theorem, we obtain that,
almost surely,  
	\[ \frac{1}{\sqrt n} P_n \left(\frac{z}{n^{1/\alpha}}
	\right) \xrightarrow[n \to \infty]{} \sum_{k=0}^{\infty} \frac{a_k}{\displaystyle \left( \int_\mathbb{C} |z|^{2k} e^{-\frac{\lambda}{\alpha}|z|^{\alpha}}			\lambda {\rm d}\mu_{\alpha}(z) \right)^{1/2} }z^k  
	 \]
	uniformly on $D_\rho$.
	Since this happens
	for every $\rho>0$
	and by writing
	the explicit expression
	of the integral
	\eqref{Mittag}
	we obtain that, almost surely,
\[ \frac{1}{\sqrt n} P_n \left(\frac{z}{n^{1/\alpha}}
	\right) 
	\xrightarrow[n \to \infty]{}
	\sum_{k=0}^{\infty} \frac{a_k}{\alpha^{1/2}
\left( \frac{\alpha}{\lambda}\right)^{k/\alpha} 
\Gamma \left(1+\frac{2k}{\alpha}\right)^{1/2}} 
 z^k
=\frac{1}{\alpha^{1/2}}
 f_{\alpha,\lambda}(z)\]	
uniformly on compact sets of 
$\CC$.

	\textbf{Proof at infinity and of
	the maxima. } 
	By the equivariance under inversion, both
	points are an immediate consequence of the first point, using the fact that if $\nu$ satisfies $\lim_{r \to \infty} r^{\alpha} \nu(\hspace{0.5pt} \CC \setminus D_r) = \lambda$, then its pushforward by the inversion, $i_*\nu$, satisfies $\lim_{r \to 0} i_*\nu(D_r)/r^{\alpha} = \lambda$.
	
\end{proof}

\subsubsection{Proof of Theorem \ref{th:Weyl}}

As promised, this will be a very short proof
which will use the same ideas of the previous
proofs. Notice that $P_n$ has the same law as
  \[\tilde P_n(z) = 
\sum_{k=0}^n 
\frac{\sqrt{n^k}}{\sqrt {k!}} a_{n-k} z^k.\]
We invert the zeros by considering
\[Q_n(z) = \frac{\sqrt{n!}}{\sqrt{n^n}} 
z^n \tilde P_n(1/z)
=\sum_{k=0}^n 
\frac{\sqrt {n(n-1)\dots (n-k+1)}}{
\sqrt{n^{k}}} a_{k} z^k.\]
Notice that
\[\frac{\sqrt {n(n-1)\dots (n-k+1)}}{
\sqrt{n^{k}}}\leq 1 \ \ \mbox{ and } \ \
\lim_{n \to \infty}
\frac{\sqrt {n(n-1)\dots (n-k+1)}}{
\sqrt{n^{k}}} = 1\]
for any $k$. By a dominated convergence argument
and since the series
\begin{equation}
\label{eq:series}
\sum_{k=0}^\infty a_k z^k
\end{equation}
has a radius of convergence one, we obtain that,
almost surely,
$Q_n$ converges uniformly on compact sets
of $\mathbb D$
towards the random analytic function defined by
\eqref{eq:series}. The proof is completed
by the Hurwitz's continuity
(Lemma \ref{lemma:hurwitz}), the continuity
of the minimum (Lemma \nolinebreak \ref{lemma:min})
and the determinantal
structure of the zeros if
the coefficients are complex Gaussian random variables.

\section{Appendix: Point processes}
\label{section:appendix}
We remind
some definitions and properties
of point processes
that are used in the article.
Let $X$ be a Polish space\footnote{We
say that a separable topological space $X$ 
is a Polish space if there exists
a complete metric that metrizes its topology. It is not necessary to choose one such metric but
only to know that it exists.}.
We denote by $\mathcal C_X$
the space of locally
finite positive measures $P$ on $X$
such that $P(A)$ is a non-negative
integer or infinity
for every
measurable set $A \subset X$.
It is not hard to see
that
for every $P$ there exists
a countable family 
$(x_\lambda)_{\lambda \in \Lambda}$
of elements of $X$ 
such that every $x \in X$
has an open neighborhood 
$U \subset X$ for which
the cardinal of
$\{\lambda \in \Lambda :
x_\lambda \in U\}$
is finite
and such that
\[P = \sum_{\lambda \in \Lambda}
\delta_{x_\lambda}.\]
Indeed, we could have 
defined	$\mathcal C_X$
more loosely by saying
\[ \mathcal C_X = 
\left\{P \subset X :
P \mbox{ is locally finite
	and admits multiplicities} \right\} \]
and the measure version $P$ 
would count the number of points
inside a set.
This set notation shall be used
along the article.
We will endow $\mathcal C_X$
with a topology.
Let $f:X \to \mathbb R$
be a continuous function with compact support.
Define
$\hat f:\mathcal C_X \to \mathbb R$
by
\[ \hat f(P)=\sum_{x \in P} f(x)
= \int_X f {\rm d}P \]
where in the sum we count $x$
with multiplicity.
Notice that $\hat f$ makes sense
since
$P$ is locally finite and
$f$ is compactly supported. Then
we endow $\mathcal C_X$
with the smallest
topology such that 
$\hat f$ is continuous 
for every continuous function 
$f :X \to \mathbb R$
with compact support.
Notice that
this topology
is the vague topology
if 
$\mathcal C_X$ is seen as
a subspace of
the space of Radon measures on $X$.
In particular,
since the space of Radon measures
on a Polish space is Polish 
\cite[Theorem 4.2]{Kallenberg},
it can be proved that
$\mathcal C_X$,
since it is a closed subset of this space, is a
Polish space too. Finally,
a random element of $\mathcal C_X$
will be said to be
a \textit{point process} on $X$.

We shall be mainly interested
in the cases where $X$ is
$\mathbb R^+=
[0,\infty)$,
$X$ is $[0,1)$
or $X$ is an open subset of $\mathbb C$.
Below we state some facts that are
 used in the article.
 The first one is that convergence
on $\mathcal C_X$ implies
convergence of the minima.
The second one is that the application
that associates
to each holomorphic function its zeros is continuous.
The last one is the notion of determinantal
point process and some of its properties.

\subsection{Convergence of the minima}

\begin{lemma}[Continuity of the minimum]
	\label{lemma:min}
	The application
	$\min:\mathcal C_{\mathbb R^+} 
	\to [0,\infty]$,
	that to each $P \in \mathcal C_{\mathbb R^+}$
	associates its minimum or,
	in measure terms, the infimum
	of its support,
	is continuous.
	Similarly,
	the application
	that to each
	$P \in \mathcal C_{\mathbb [0,1) }$
	associates its minimum in $[0,1]$
	is continuous.
	\begin{proof}
		We will only give the proof
		of the continuity of $\min$
		since the proof of the continuity of
		the second
		application follows the same steps.
		Let $P \in  \mathcal C_{\mathbb R^+}$
		and consider
		a sequence $\{P_n\}_{n \in \mathbb N}$
		that converges to $P$.
		
		{\bf Suppose $\min P < \infty$}.
		Take $\varepsilon > 0$ and
		any  positive continuous function 
		$f: \mathbb R^+ \to \mathbb R$
		supported on $[0,\min P + \varepsilon]$
		such that $f(\min P) > 0$.
		Since $\lim_{n\to \infty}\hat f (P_n)=
		 \hat f (P)$,
		we have
		that
		$\hat f(P_n) >0$ for $n$ large enough
		and then $\min P_n \leq \min P + \varepsilon$
		for $n$ large enough so that
		\[\varlimsup_{n \to \infty} \min P_n
		\leq \min P + \varepsilon.\]
		Since 
		this can be done for every $\varepsilon>0$,
		we obtain \[\varlimsup_{n \to \infty} \min P_n
		\leq \min P.\]
		
		Now take $a \in \mathbb R^+$
		such that
		$a  < \min P$. 
		Consider any continuous function $f$
		supported on $[0,\min P]$
		such that $f(r) = 1$ if
		$r \leq a$. Since
		$\lim_{n \to \infty}
		\hat f (P_n)=
		\hat f (P) = 0$,
		we have that 
		$\hat f (P_n) < 1$ for $n$ large enough.
		Then, $P_n ([0,a]) = 0$
		and, thus, $\min P_n \geq a$
		for $n$ large enough. So,
		\[  a \leq \varliminf_{n \to \infty} 
		\min P_n.\]
		Since this can be done for 
		every $a < \min P_n$, we obtain
		\[\min P \leq \varliminf_{n \to \infty} 
		\min P_n\] and we may conclude.
		
		{\bf Suppose $ \min P = \infty$}, i.e.
		$P(\RR^+) = 0$.
		Take $M>0$ and consider
		a non-negative continuous function
		$f : \RR^+ \to \RR$ 
		with compact support such that
		$f(y) = 1$ if $y \leq M$.
		Then,
		since $\lim_{n \to \infty}
		\hat f(P_n) = \hat f(P)=0$,
		we have that
		$\hat f_n(P)< 1$ for $n$ large enough.
		In particular $P_n([0,M]) = 0$ 
		for $n$ large enough
		which implies that
		$\min P_n > M$ for those $n$.
		Since this can be done
		for every $M > 0$,
		we obtain \[\lim_{n \to \infty} \min P_n
		= \infty\] by definition of limit.
		
	\end{proof}
\end{lemma}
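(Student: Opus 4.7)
The plan is to exploit the very definition of the topology on $\mathcal{C}_X$: convergence $P_n \to P$ means $\hat{f}(P_n) \to \hat{f}(P)$ for every continuous, compactly supported $f : X \to \mathbb{R}$. So the whole proof reduces to choosing good test functions and reading off information about the minimum from the values of $\hat{f}$. Concretely, I would prove continuity by bounding $\varlimsup_n \min P_n$ and $\varliminf_n \min P_n$ separately against $\min P$, splitting into cases according to whether the limit is finite or not.

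First I would treat the generic case $\min P = m < \infty$ for the application on $\mathcal{C}_{\mathbb{R}^+}$. For the upper bound, given any $\varepsilon > 0$, I would pick a nonnegative continuous function $f$ supported in $[0, m+\varepsilon]$ with $f(m) > 0$; then $\hat{f}(P) \geq f(m) > 0$, so $\hat{f}(P_n) > 0$ for $n$ large, which forces $P_n$ to charge $[0, m+\varepsilon]$ and hence $\min P_n \leq m+\varepsilon$. Letting $\varepsilon \to 0$ yields $\varlimsup_n \min P_n \leq m$. For the lower bound, given any $a < m$, I would pick a nonnegative continuous $f$ supported in $[0, m)$ with $f \equiv 1$ on $[0, a]$; since $P$ has no mass in $[0, m)$ we have $\hat{f}(P) = 0$, so $\hat{f}(P_n) < 1$ eventually, which forces $P_n([0, a]) = 0$ and hence $\min P_n > a$. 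Letting $a \uparrow m$ gives $\varliminf_n \min P_n \geq m$.

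The case $\min P = \infty$ (i.e.\ $P$ is the zero measure) is handled by the same kind of test function: for any $M > 0$, pick a nonnegative compactly supported continuous $f$ with $f \equiv 1$ on $[0, M]$, observe $\hat{f}(P) = 0$, and conclude $P_n([0, M]) = 0$ for $n$ large, so $\min P_n > M$. For the second application, defined on $\mathcal{C}_{[0,1)}$ with values in $[0, 1]$, the argument is word-for-word the same, with the only caveat that ``compact support in $[0, 1)$'' means supported in some $[0, a']$ with $a' < 1$; this costs nothing since in every step the relevant test function can be chosen with support strictly inside $[0, 1)$ (for the case $\min P = 1$, which plays the role of the empty case, any $a < 1$ admits such an $f$).

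I do not expect a serious obstacle here: the statement is essentially a soft consequence of the vague topology. The only point that demands a moment of care is verifying that the test functions constructed for the lower bound in the finite case can be taken with support strictly below $\min P$ (so that $\hat{f}(P) = 0$), and that in the $[0, 1)$ setting all auxiliary functions remain compactly supported in the open interval; both are immediate.
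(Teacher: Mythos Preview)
Your proposal is correct and follows essentially the same approach as the paper: both argue via sequential continuity, split into the cases $\min P < \infty$ and $\min P = \infty$, and establish the two inequalities $\varlimsup_n \min P_n \leq \min P$ and $\varliminf_n \min P_n \geq \min P$ by choosing exactly the same kind of bump test functions and reading off the consequences of $\hat f(P_n) \to \hat f(P)$. The only cosmetic difference is that you take the lower-bound test function supported in $[0,m)$ while the paper takes it supported in $[0,m]$, which amounts to the same thing since continuity forces $f(m)=0$ in the latter case.
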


\subsection{Continuity of the zeros}

\begin{lemma}[Hurwitz's continuity]
	\label{lemma:hurwitz}
	Consider an open subset $U$ 
	of $\CC$
	and denote by $\mathcal O(U)$
	the space
	of not identically zero holomorphic functions
	endowed
	with the compact-open topology 
	(the topology of uniform
	convergence on compact sets).
	Then the map \break
	$\mathcal Z:\mathcal O(U)
	\to \mathcal C_{U}$
	defined
	by
	\[\mathcal Z(p) = 
	\sum_{p(z) = 0} \delta_z,\]
	where the zeros
	are counted with multiplicity,
	is continuous.
	\begin{proof}
Notice that, by the very
definition of the topology on
$\mathcal C_U$, the map
$\mathcal Z$ is continuous
if and only if
$\hat f \circ \mathcal Z$ is continuous
for every continuous
function $f:U \to \mathbb R$
with compact support.

	Let $f:U \to \RR$ be a 
		continuous
		function with compact support
		and
		let $(p_n)_{n \in \NN}$ 
		be a sequence of elements in 
		$\mathcal O(U)$
		that has a limit $p \in \mathcal O(U)$.
		Denote by $z_1, \dots ,z_l$
		the zeros of $P$ inside $\mbox{supp}(f)$.
		Denote by $L$ the total number of
		zeros of $P$ counted with multiplicity.
		Take $\varepsilon>0$. 
		We will find $N>0$ such that
		$\left|\sum_{p_n(z)=0}f(z) - 
		\sum_{p(z)=0}f(z)\right| 
		< \varepsilon$ for $n > N$
		where the zeros are, again,
		counted with multiplicity in the sums.
		
		By the continuity of $f$
		we can choose
		$\tilde \delta> 0$
		such that 
		for
		every $k \in \{1, \dots ,l\}$ we have
		$|f(z)- f(z_k)| < \varepsilon/L$
		for every $z \in D_{\tilde \delta}(z_k)$.
		By Hurwitz's theorem,
		since $(p_n)_{n \in \NN}$ converges
		to $p$ uniformly on compact sets,
		there exists $\delta>0$
		and $\tilde N>0$
		such that $\delta<\tilde \delta$
		and
		for
		every $k \in \{1, \dots ,l\}$
		the number
		of zeros of $p_n$ inside $D_\delta(z_k)$
		counted with multiplicity
		is exactly the same as the multiplicity of
		the zero $z_k$ of $p$ for $n \geq \tilde N$.
		Define
		$K=\mbox {supp} f 
		\cap \mathbb C \setminus D_{\delta}(z_1)
		\cap \dots
		\cap \mathbb C \setminus D_{\delta}(z_l)$.
		Because of the uniform convergence
		on $K$
		and because
		$p(z)\neq 0$
		for every $z \in K$ 
		we can take $N>0$ such that $N > \tilde N$
		and
		$\sup_{z \in K}|p_n(z) - p(z)| < 
		\inf_{z \in K}|p(z)|$. This implies,
		in particular, that
		$p_n(z) \neq 0$ for every $n \geq N$
		and $z \in K$.
		We may conclude by
		saying that
		
		\begin{align*}
		\left|\sum_{p_n(z)=0}f(z) - 
		\sum_{p(z)=0}f(z)\right| 
		&\leq 
		\sum_{k=1}^l 
		\left( \sum
		\limits_{\substack{p_n(z)=0 \\ 
				z \in D_{\delta}(z_k)}}
		\left|f(z) - f(z_k)\right| \right) \\
		& < 
		\sum_{k=1}^l 
		\left( \sum
		\limits_{\substack{p_n(z)=0 \\ 
				z \in D_{\delta}(z_k)}}
		\varepsilon/L \right)
		= \varepsilon.
		\end{align*}

	\end{proof}
	
\end{lemma}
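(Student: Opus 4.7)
The plan is to unwind the definition of the topology on $\mathcal{C}_U$: it is the coarsest topology making every evaluation $\widehat{f}(P) = \int f\,\mathrm d P$ continuous, for $f:U\to\mathbb{R}$ continuous with compact support. So it suffices to prove that, for every such $f$, the map $p \mapsto \sum_{p(z)=0} f(z)$ (zeros counted with multiplicity) is continuous on $\mathcal{O}(U)$ in the compact-open topology. Fix such an $f$ and a sequence $p_n \to p$ uniformly on compact subsets of $U$, and fix $\varepsilon>0$.

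First I would enumerate the zeros $z_1,\dots,z_l$ of $p$ lying in $\mathrm{supp}(f)$. Since $p\not\equiv 0$, its zeros form a discrete subset of $U$, and since $\mathrm{supp}(f)$ is compact, the list is finite; let $L$ be the sum of their multiplicities. By the (uniform) continuity of $f$, pick $\tilde\delta>0$ so that $|f(z)-f(z_k)|<\varepsilon/L$ on each $D_{\tilde\delta}(z_k)$. Then choose $0<\delta<\tilde\delta$ small enough that the closed disks $\overline{D_\delta(z_k)}$ are pairwise disjoint and contained in $U$, and $p$ has no zero on $\overline{D_\delta(z_k)}$ other than $z_k$. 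Now invoke Hurwitz's theorem in complex analysis on each disk $D_\delta(z_k)$: since $p_n \to p$ uniformly on a neighborhood of $\partial D_\delta(z_k)$ and $p$ does not vanish there, for $n$ large enough the number of zeros of $p_n$ inside $D_\delta(z_k)$, counted with multiplicity, is exactly the multiplicity of $z_k$ as a zero of $p$.

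Next I would control the complement. On the compact set $K = \mathrm{supp}(f) \setminus \bigcup_k D_\delta(z_k)$, the function $p$ is non-vanishing, so $\inf_K |p|>0$; uniform convergence on $K$ then ensures $p_n$ is also non-vanishing on $K$ for $n$ large. Combined with the previous step, every zero of $p_n$ in $\mathrm{supp}(f)$ lies in some $D_\delta(z_k)$ for $n$ large, and the total count in each disk matches the multiplicity of $z_k$. Consequently,
\[
\Bigl|\sum_{p_n(w)=0} f(w) - \sum_{p(z)=0} f(z)\Bigr|
\;\le\; \sum_{k=1}^{l}\!\!\sum_{\substack{p_n(w)=0\\ w\in D_\delta(z_k)}}\!\! |f(w)-f(z_k)|
\;<\; L\cdot \tfrac{\varepsilon}{L} \;=\; \varepsilon,
\]
which proves continuity. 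The only subtle point is booking zeros with multiplicity correctly, and this is exactly what Hurwitz's theorem supplies; once the disks are chosen to simultaneously cover all limiting zeros in $\mathrm{supp}(f)$ and fit inside the continuity neighborhoods of $f$, the estimate is routine. The hypothesis that $p\not\equiv 0$ (hence $p_n \not\equiv 0$ eventually, so $\mathcal{Z}(p_n)$ is well defined as a locally finite measure) is essential, as otherwise Hurwitz fails.
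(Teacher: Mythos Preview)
Your proof is correct and follows essentially the same approach as the paper's: reduce to showing $\hat f\circ\mathcal Z$ is continuous for compactly supported $f$, localize around the finitely many zeros of $p$ in $\operatorname{supp}(f)$ via Hurwitz's theorem, and rule out extra zeros on the complementary compact set by uniform convergence. If anything, you are slightly more careful in explicitly requiring the disks $\overline{D_\delta(z_k)}$ to be pairwise disjoint and contained in $U$, which the paper's proof leaves implicit.
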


\subsection{A small detour to determinantal
point process}

\label{sub:DPP}

Suppose $\mathcal X$ is a point process
on an open set $U \subset \mathbb C$.
Given a function $K: U \times U \to \mathbb C$,
we will say that $\mathcal X$ is a 
\textit{determinantal
point process} associated to the kernel $K$
(with respect to the Lebesgue measure) if the
following is true.
For every $m \in \NN^+$ and 
every disjoint measurable subsets
$A_1,\dots,A_m \subset U$ we have
\begin{equation}
\label{eq:ExpectedNumber}
\mathbb E[\# A_1 \dots \# A_m]
= 
\int_{\CC^m} 
\det (K(x_i,x_j))_{1 \leq i,j \leq m}
\mathrm d \ell_{\CC^m}(x_1,\dots,x_m)
\end{equation}
where $\# A_i$ denotes the number
of points of $\mathcal X$ inside $A_i$.
As a particular example, we may consider 
our Coulomb gases, i.e. 
$(x_1,\dots,x_n)$ following the law \eqref{gaz}.
Then the point process
$\mathcal X = \{x_1,\dots,x_n\}$
is a determinantal point process associated to
the orthogonal projection $K_n$ 
onto the space of holomorphic functions on $\mathbb C$
with weight $e^{-2(n+1)V(z)}$ or, more explicitly,
\[K_n(z,w)= \sum_{k=0}^{n-1}
b_{k,n} z^k \bar w^k e^{-(n+1)V(z)}
e^{-(n+1)V(w)}
\]
and
\[b_{k,n}
= \left( \int_\CC |z|^{2k} e^{-2(n+1)V(z)}
\right)^{-1}.\] 
For more details, we can see \cite{HoughKrisPeresVirag}. As other examples
we have the Bergman point processes
given on Definition \ref{def:BergmanPointProcess}.
From the very definition we can also
see that if $\mathcal X$ is a determinantal
point process on $U$ associated to the kernel $K$
and if $U' \subset U$ is an open subset then
$\mathcal X \cap U'$ is a determinantal
point process on $U'$ associated to the kernel
$K|_{U' \times U'}$.

There are two main properties that we
will need in our proofs.
The first one is that
the class of determinantal point process
with respect to the Lebesgue measure
is invariant under diffeomorphisms.
We will only need the following stronger
invariance under biholomorphisms. 

\begin{lemma}[Change of variables formula]
\label{lem:ChangeOfVariables}
Let
 $\mathcal X$ be a determinantal point process
on $U$
associated to the kernel $K$
and
suppose that $f:U \to \tilde U$
is a biholomorphism. Then
$f(\mathcal X)$ is a determinantal
point process on $\tilde U$
associated to the kernel
$\tilde K: \tilde U \times \tilde U \to \mathbb R$
given by
\[\tilde K(x,y) = 
(f^{-1})'(x)
K(f^{-1}(x),f^{-1}(y))
\overline {(f^{-1})'}(y) \]
\begin{proof}
It is a straightforward calculation
using \eqref{eq:ExpectedNumber}.
\end{proof}
\end{lemma}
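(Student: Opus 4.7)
The plan is to verify the defining identity \eqref{eq:ExpectedNumber} for the candidate kernel $\tilde K$. Fix $m \in \NN^+$ and let $\tilde A_1, \dots, \tilde A_m \subset \tilde U$ be pairwise disjoint measurable subsets. Set $A_i = f^{-1}(\tilde A_i) \subset U$, which are also pairwise disjoint and measurable since $f$ is a homeomorphism. Because $f$ is a bijection, the number of points of $f(\mathcal X)$ inside $\tilde A_i$ equals the number of points of $\mathcal X$ inside $A_i$, so by applying \eqref{eq:ExpectedNumber} to $\mathcal X$ we immediately get
\[
\EE\!\left[\prod_{i=1}^m \#\tilde A_i\right]
= \EE\!\left[\prod_{i=1}^m \# A_i\right]
= \int_{A_1 \times \cdots \times A_m}\! \det\!\big(K(x_i,x_j)\big)_{1\leq i,j\leq m}\, {\rm d}\ell_{\CC^m}(x_1,\dots,x_m).
\]

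Next, I would change variables $x_i = f^{-1}(y_i)$ in this integral. Since $f^{-1}: \tilde U \to U$ is holomorphic, its Jacobian determinant as a map $\RR^2 \to \RR^2$ equals $|(f^{-1})'(y_i)|^2$. Hence for each coordinate
\[
{\rm d}\ell_{\CC}(x_i) \;=\; |(f^{-1})'(y_i)|^2\, {\rm d}\ell_{\CC}(y_i)
\;=\; (f^{-1})'(y_i)\,\overline{(f^{-1})'(y_i)}\, {\rm d}\ell_{\CC}(y_i),
\]
which is the crucial factorization: holomorphicity lets me split the real Jacobian into a holomorphic piece times its conjugate. The key algebraic step is then to absorb each factor $(f^{-1})'(y_i)$ into the $i$-th row of the determinant and the corresponding factor $\overline{(f^{-1})'(y_i)}$ into the $i$-th column. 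By multilinearity of the determinant, this turns $K(f^{-1}(y_i),f^{-1}(y_j))$ into $(f^{-1})'(y_i)\,K(f^{-1}(y_i),f^{-1}(y_j))\,\overline{(f^{-1})'(y_j)} = \tilde K(y_i,y_j)$, without introducing any extraneous factors.

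Putting the two steps together yields
\[
\EE\!\left[\prod_{i=1}^m \#\tilde A_i\right]
= \int_{\tilde A_1 \times \cdots \times \tilde A_m}\! \det\!\big(\tilde K(y_i,y_j)\big)_{1\leq i,j\leq m}\, {\rm d}\ell_{\CC^m}(y_1,\dots,y_m),
\]
which is exactly the defining identity \eqref{eq:ExpectedNumber} for $f(\mathcal X)$ associated to the kernel $\tilde K$. The proof is then complete. There is no genuine obstacle here: the only subtlety worth noting is that this clean transformation law relies entirely on holomorphicity of $f$, since a general $\mathcal C^1$ diffeomorphism would produce a Jacobian factor that does not split as a product of a holomorphic factor and its conjugate, and one would only obtain a determinantal structure with respect to a deformed reference measure rather than Lebesgue measure.
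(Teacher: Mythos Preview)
Your proposal is correct and is precisely the ``straightforward calculation using \eqref{eq:ExpectedNumber}'' that the paper invokes without spelling out: you relate the point counts via the bijection, change variables with Jacobian $|(f^{-1})'|^2$, and absorb the factors $(f^{-1})'(y_i)$ and $\overline{(f^{-1})'(y_j)}$ into the rows and columns of the determinant. There is nothing to add.
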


The second one
is contained
in \cite{ShiraiTakahashi} and is
the main tool to prove the convergence of 
our point processes.

\begin{proposition}[Convergence of determinantal
point processes]
\label{prop:ConvergenceOfDPP}
Suppose that 
$(\mathcal X_n)_{n \in \NN}$ 
is a sequence
of determinantal point processes
on an open subset $U \subset \CC$
associated
to a sequence
of continuous kernels 
$(K_n)_{n \in \NN}$.
If there exists $K: U \times U \to \mathbb C$
such that
\[\lim_{n\to\infty} K_n = K\]
uniformly on compact sets of 
$U \times U$ then
there exists a determinantal point process
$\mathcal X$ associated
to $K$ and
\[\mathcal X_n \xrightarrow[n \to \infty]{
\mathrm{law}} 
\mathcal X.\]
\begin{proof}
See \cite[Proposition 3.10]{ShiraiTakahashi}.
\end{proof}
\end{proposition}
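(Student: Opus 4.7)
The plan is to establish the result in three phases: verify that $K$ defines a bona fide determinantal point process, show that $(\mathcal{X}_n)_{n}$ is tight in $\mathcal{C}_U$, and identify every subsequential limit with $\mathcal{X}$. For the first phase I would invoke the Macchi--Soshnikov characterization: a kernel $K$ defines a DPP on $U$ with respect to Lebesgue measure if and only if for every compact $D \subset U$ the integral operator $\mathcal{K}_D$ on $L^2(D)$ with kernel $K|_{D\times D}$ is self-adjoint, locally trace-class, and has spectrum contained in $[0,1]$. Since each $K_n$ gives a DPP, the operators $\mathcal{K}_{D,n}$ already enjoy these properties, and uniform convergence $K_n \to K$ on the compact $D \times D$ (with continuous limit) yields Hilbert--Schmidt convergence of the integral operators; together with $\mathrm{tr}(\mathcal{K}_{D,n}) = \int_D K_n(x,x)\, d\ell_{\mathbb{C}}(x) \to \int_D K(x,x)\,d\ell_{\mathbb{C}}(x)<\infty$, this passes self-adjointness, the trace-class property, and spectral localization in $[0,1]$ to the limit operator $\mathcal{K}_D$.

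For the second phase I would apply Kallenberg's tightness criterion on the Polish space $\mathcal{C}_U$: it suffices to show that for every relatively compact Borel set $A$ with $\bar A \subset U$, the sequence $\#A(\mathcal{X}_n)$ is tight in $[0,\infty)$. This follows since $\mathbb{E}[\#A(\mathcal{X}_n)] = \int_A K_n(x,x)\,d\ell_{\mathbb{C}}(x)$ converges to $\int_A K(x,x)\,d\ell_{\mathbb{C}}(x) < \infty$ by uniform convergence on $\bar A$, so the expectations are uniformly bounded and Markov's inequality gives tightness of the counts.

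For the identification phase, let $\mathcal{Y}$ be any subsequential limit. The law of a point process on $U$ is determined by the joint laws of $(\#A_1,\dots,\#A_m)$ as $(A_1,\dots,A_m)$ ranges over finite tuples of relatively compact Borel subsets with closures in $U$; for a DPP on such sets each $\#A_i$ is distributed as a (possibly infinite) sum of independent Bernoulli variables whose parameters are the eigenvalues of $\mathcal{K}_{A_i}$, so the joint distribution is determined by its factorial moments. By definition of the determinantal structure, the $m$-fold factorial moment equals
\begin{equation*}
\int_{A_1\times\cdots\times A_m} \det\bigl(K_n(x_i,x_j)\bigr)_{1\le i,j\le m}\, d\ell_{\mathbb{C}^m}(x_1,\dots,x_m),
\end{equation*}
and uniform convergence of $K_n$ to $K$ on the compact $\bar A_1 \times \cdots \times \bar A_m$ passes to the $m\times m$ determinant and then to the integral (by dominated convergence, using that $|\det(K_n(x_i,x_j))|$ is uniformly bounded on that compact), so these quantities converge to the corresponding factorial moments of $\mathcal{X}$. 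Hence $\mathcal{Y} \overset{\rm law}{=} \mathcal{X}$ and the whole sequence converges by Prohorov. The principal obstacle lies in justifying that convergence of factorial moments entails convergence in distribution of the integer-valued vectors $(\#A_1,\dots,\#A_m)$; this step is where the DPP hypothesis is essential, since the Bernoulli-sum representation of DPP counts guarantees sufficiently fast tail decay to make the moment problem uniquely solvable and, combined with the tightness above, forces every subsequential limit to coincide with $\mathcal{X}$.
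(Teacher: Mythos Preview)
The paper does not actually prove this proposition: its entire proof is the one-line citation ``See \cite[Proposition 3.10]{ShiraiTakahashi}.'' So there is no argument in the paper to compare against; you have supplied a proof sketch where the authors defer to the literature.

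Your three-phase outline (existence of the limit DPP via Macchi--Soshnikov, tightness via uniformly bounded first intensities, identification via convergence of correlation functions) is correct and is in fact the standard route, essentially the one Shirai--Takahashi follow. A couple of small points of precision are worth noting. First, Macchi--Soshnikov is stated for Hermitian kernels; the proposition as written does not impose this, but in every application in the paper the kernels are Hermitian, so your invocation is legitimate in context. Second, what you call the ``$m$-fold factorial moment'' is really the $m$-th correlation measure: for disjoint $A_1,\dots,A_m$ the determinantal identity gives $\mathbb{E}[\#A_1\cdots\#A_m]$, while for a single set $A$ the integral of $\det(K(x_i,x_j))$ over $A^m$ gives $\mathbb{E}[\#A(\#A-1)\cdots(\#A-m+1)]$. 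The logic is unaffected, since the family of correlation measures determines the law once the moment problem is well posed, and you correctly isolate this last step as the crux and correctly observe that the Bernoulli-sum structure of DPP counts (sub-Poissonian tails) resolves it.
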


\section{Acknowledgments}
We thank Djalil Chafa\"i and Mathieu Chambefort for their precious help with the simulations. We also thank Raphael Ducatez and Avelio Sep\'ulveda for their help and comments.
Last but not least, we warmly thank the anonymous referee for the many useful suggestions and comments which allowed us to improve this article.

\bibliographystyle{alpha} 
\bibliography{biblio} 
\end{document}